\DeclareMathOperator{\Int}{int}
\newcommand{\pwfm}{\textrm{piecewise Fuchsian Markov\,}}
\newcommand{\cfm}{\textrm{completely folding map\,}}
\newcommand{\hdm}{\textrm{higher degree map without folding\,}}
\newtheorem{theorem}{Theorem}[section]
\newtheorem{prop}[theorem]{Proposition}
\newtheorem{lemma}[theorem]{Lemma}
\newtheorem{cor}[theorem]{Corollary}
\newtheorem{qn}[theorem]{Question}
\newtheorem{defn}[theorem]{Definition}
\newtheorem{rmk}[theorem]{Remark}
\newcommand{\hhat}{\widehat}
\newcommand{\C}{{\mathbb C}}
\newcommand{\G}{{\Gamma}}
\newcommand\BB{{\mathcal B}}
\newcommand\FF{{\mathcal F}}
\newcommand\GG{{\mathcal G}}
\newcommand\LL{{\mathcal L}}
\newcommand\MM{{\mathcal M}}
\newcommand\PP{{\mathcal P}}
\newcommand\PMF{{\PP\kern-2pt\MM\FF}}
\newcommand\PML{{\PP\kern-2pt\MM\LL}}
\newcommand\Z{{\mathbb Z}}
\newcommand\R{{\mathbb R}}
\newcommand{\disk}{{\mathbb D}}
\newcommand{\fsubd}{\mathrel{{\scriptstyle\searrow}\kern-1ex^d\kern0.5ex}}
\newcommand{\bsubd}{\mathrel{{\scriptstyle\swarrow}\kern-1.6ex^d\kern0.8ex}}
\newcommand{\fsubeq}{\mathrel{\raise-.7ex\hbox{$\overset{\searrow}{=}$}}}
\newcommand{\bsubeq}{\mathrel{\raise-.7ex\hbox{$\overset{\swarrow}{=}$}}}
\newcommand{\bbar}{\overline}
\newcommand{\bS}{{\mathbb S}}
\DeclareFontFamily{U}{tipa}{}
\DeclareFontShape{U}{tipa}{m}{n}{<->tipa10}{}
\newcommand{\arc@char}{{\usefont{U}{tipa}{m}{n}\symbol{62}}}%
\newcommand{\arc}[1]{\mathpalette\arc@arc{#1}}
\newcommand{\arc@arc}[2]{%
  \sbox0{$\m@th#1#2$}%
  \vbox{
    \hbox{\resizebox{\wd0}{\height}{\arc@char}}
    \nointerlineskip
    \box0
  }%
}
\newcommand*\circcled[1]{\tikz[baseline=(char.base)]{
\node[shape=circle,draw,inner sep=1.3pt] (char) {#1};}}
\newcommand{\ciq}{\circcled{?}}
\begin{document}

\title{The Sullivan dictionary and Bowen-Series maps}

\author{Mahan Mj}

\address{School of Mathematics, Tata Institute of Fundamental Research, Mumbai-400005, India}
\email{mahan@math.tifr.res.in}
\email{mahan.mj@gmail.com}
\urladdr{http://www.math.tifr.res.in/~mahan}

\author{Sabyasachi Mukherjee}

\address{School of Mathematics, Tata Institute of Fundamental Research, Mumbai-400005, India}
\email{sabya@math.tifr.res.in}
\email{mukherjee.sabya86@gmail.com}
\urladdr{http://www.math.tifr.res.in/~sabya/}


\thanks{Both authors were  supported by  the Department of Atomic Energy, Government of India, under project no.12-R\&D-TFR-5.01-0500 as also  by an endowment of the Infosys Foundation.
	MM was also supported in part by   a DST JC Bose Fellowship. SM was supported in part by SERB research project grant  SRG/2020/000018. }

\begin{abstract}
	The Sullivan dictionary between Kleinian groups and rational dynamics describes striking similarities between the fields, both in terms of the objects of study as well as the techniques used.
 We give an expository account of a recent bridge between the two sides of the dictionary by describing a framework for combining a Fuchsian group with a complex polynomial into a single dynamical system on the Riemann sphere.
\end{abstract}

\maketitle

\hfill To Dennis with admiration and affection.\\

\date{\today}

\section{Introduction}
In this expository article, we draw heavily from and build upon two strands of Dennis Sullivan's work:
\begin{enumerate}
	\item The Sullivan dictionary between Kleinian groups and rational dynamics \cite{sullivan-dict}.
	\item The Patterson-Sullivan measure \cite{sullivan_cm,haus-dim-sul}.
\end{enumerate}
We shall survey these two themes in the light of a recent combination theorem or a bridge between Kleinian groups and polynomial maps discovered by the authors \cite{mj-muk-1}. An essential ingredient in the building of this bridge is the Bowen-Series map \cite{bowen,bowen-series}.

Sullivan's  dictionary \cite[p. 405]{sullivan-dict} was based on the empirical insight that Kleinian groups and rational dynamics  share many common features. For instance, the \emph{limit set} (resp. the \emph{domain of discontinuity}) of a Kleinian group corresponds to the \emph{Julia set} (resp. the \emph{Fatou set}) of a rational map.
Sullivan extended these similarities to a deeper similarity between \emph{techniques}
by  introducing quasiconformal methods into the field of rational dynamics. This led
to the proof of his celebrated \emph{no wandering domains} theorem.
In fact, using these techniques, Sullivan gave a new proof of Ahlfors' finiteness theorem 
along the lines  of the no wandering domains  theorem.

Our focus here is on a line in the Sullivan dictionary that observes the similarity between the following:
\begin{enumerate}
	\item The Bers simultaneous uniformization theorem in Kleinian groups
	\item Polynomial mating in rational dynamics,  introduced by Douady and Hubbard  \cite{douady-mating}. 
\end{enumerate}

The first step is to replace the Kleinian group by a single map that captures its dynamics. This brings us to the notion of a mateable map (see Section \ref{sec-mateability} below for details).
With the context of mateable maps in place, we address the following question:

\begin{qn}\label{qn-basic}
	Which mateable maps  and polynomials can be mated in the spirit of Douady and Hubbard?
\end{qn}

It turns out that  Bowen-Series maps \cite{bowen,bowen-series} for punctured sphere groups provide
such examples. Surprisingly, there exists a new class of  related maps which we call
\emph{higher Bowen-Series maps} that also fit the bill and 
give rise to combination theorems as well as `dynamically natural' homeomorphisms between limit and Julia sets. 
As the name suggests, higher Bowen-Series maps are closely related to Bowen-Series maps. Indeed, higher Bowen-Series maps appear as second iterates of suitable Bowen-Series maps. Higher Bowen-Series maps can also be characterized as `amalgams' of several Bowen-Series maps of the same Fuchsian group with overlapping fundamental domains. This part of the story is complex analytic in flavor and is taken largely from \cite{mj-muk-1}. 

It is worth mentioning that examples of dynamically natural homeomorphisms between limit sets of Kleinian reflection groups (i.e., discrete subgroups of $\mathrm{Aut}(\widehat{\C})$ generated by reflections in finitely many Euclidean circles) including the classical Apollonian gasket limit set and Julia sets of anti-holomorphic rational maps were first constructed in \cite{LLMM4}, and this phenomenon was studied systematically in a general framework in \cite{LMM20,LLM1}. To the best of our knowledge, \cite[Theorem~7.16]{mj-muk-1} gives the first example of such an explicit connection between limit sets and Julia sets in the holomorphic setting.

In the last section of this survey, we turn to the measurable dynamics of mateable maps and the resulting matings. From the point of view of group theory, the measure-theoretic framework, naturally and rather appropriately, turns out to be that of Patterson-Sullivan measures. On the other hand, since mateable maps share features of rational maps, the limit set of a mateable map supports a natural dynamically defined measure: the measure of maximal entropy (the existence of a unique maximal entropy measure for a rational map was proved in \cite{Lyubich_mme} and independently in \cite{Mane,FLM}). The fact that a mateable map is an object halfway between groups and polynomials is reflected in close connections between maximal entropy measures of mateable maps and suitable Patterson-Sullivan measures. We conclude the article with some estimates of Hausdorff dimensions of maximal entropy measures of (higher) Bowen-Series maps and related open questions.

The phenomenon of ``mating'' of rational maps with Fuchsian groups was discovered in the 1990s by Bullett and Penrose in the context of iterated algebraic correspondences \cite{bullett-penrose} and was studied comprehensively in \cite{BL20,BL21}. Specifically, they constructed a family of algebraic correspondences of bi-degree $(2,2)$, and showed that the members of this family can be interpreted in an appropriate sense as matings of the modular group with quadratic rational maps. This is quite different from our mating framework as we extract a non-invertible map (a mateable map) from a Kleinian group (i.e., a semi-group dynamics from the dynamics of a non-commutative group) and then combine this map with the dynamics of a polynomial thereby producing a hybrid dynamical system in one complex variable. It would be quite interesting to know if our mating framework has deeper connections with that of Bullett-Penrose-Lomonaco.

\section*{Acknowledgments} We thank Caroline Series for the proof of Proposition \ref{not_abs_cont_lem}. Both of us have been enriched
and inspired by the beauty and simplicity of Dennis Sullivan's work. This survey article is an attempt to record our debt.

The authors are extremely grateful to the anonymous referees for detailed comments and suggestions for improvement.

\section{Mateability}\label{sec-mateability}

Let $\textrm{Aut}({\disk})$ denote the group of all conformal automorphisms of the unit disk $\disk$. A Fuchsian group $\Gamma$ is a discrete subgroup of $\textrm{Aut}({\disk})$. The aim of this section is to spell out what it means to mate a Fuchsian group with a polynomial. 
We provide the definition of mateability at the outset. The definition below will imply that  $\Gamma$ is a lattice (Lemma \ref{lem-gammalattice}).

\begin{defn}\label{def-mateable}
A continuous map $A:\mathbb{S}^1\to\mathbb{S}^1$ is a \emph{mateable} map associated with a Fuchsian group $\Gamma$ if the following are satisfied:
	
	\begin{enumerate}
		\item\label{oe} $A$ is orbit equivalent to $\Gamma$.
		\item\label{pwa} $A$ is piecewise analytic on $\bS^1$.
		\item\label{exp} $A$ is an expansive covering map of degree greater than one.
		\item\label{markov} $A$ is Markov.
		\item\label{asym_hyp} No periodic break-point of $A$ is asymmetrically hyperbolic.
	\end{enumerate}
\end{defn}  
The failure of any of the conditions in Definition \ref{def-mateable} provides an obstruction to mateability. Somewhat surprisingly, it turns out that these necessary conditions are also sufficient (see Proposition \ref{conformal_mating_general_prop}).

We elaborate now on the terms used in Definition \ref{def-mateable}.
	Let $A:\bS^1\to \bS^1$ be a (not necessarily continuous) map. The \emph{grand orbit} of a point $x\in\bS^1$ under $A$ is defined as 
	$$
	\mathrm{GO}_A(x):=\{ x'\in\bS^1: A^{ m}(x)=A^{ n}(x'),\ \textrm{for\ some}\ m, n\geq 0\}.
	$$

	Let $\Gamma$ be a Fuchsian group with limit set equal to $\Lambda \subset \mathbb{S}^1$. We say that a (not necessarily continuous) map $A:\mathbb{S}^1\to\mathbb{S}^1$ is \emph{orbit equivalent} to $\Gamma$ on $\Lambda$ if for every $x\in\Lambda$,  $$\Gamma\cdot x= \mathrm{GO}_A(x).$$

A (not necessarily continuous) map $A:\mathbb{S}^1\to\mathbb{S}^1$  is \emph{piecewise M{\"o}bius} if there exist $k\in\mathbb{N}$, closed arcs $I_j\subset\mathbb{S}^1$, and $g_j\in\textrm{Aut}({\disk})$ for $j\in\{1,\cdots, k\}$,  such that
	\begin{enumerate}
		\item $\displaystyle\mathbb{S}^1=\bigcup_{j=1}^k I_j,$
		
		\item $\Int{I_m}\cap\Int{I_n}=\emptyset$ for $m\neq n$, and
		
		\item $A\vert_{I_j}=g_j$.
	\end{enumerate}
	
	\noindent A piecewise M{\"o}bius map $A$ as above is called \emph{piecewise Fuchsian} if $g_1,\cdots, g_k$ generate a Fuchsian group, which we denote by $\Gamma_A$.
	If the maps $g_j$ are assumed only to be complex-analytic in some small neighborhoods of $I_j$ (without requiring them to be M{\"o}bius), then $f$ is said to be \emph{piecewise analytic}. 
	
	The maps $g_j$ will be called the \emph{pieces} of $A$. We shall occasionally refer to the domains $I_j$ of $g_j$ also as  \emph{pieces}
	of $A$ when there is no scope for confusion.

\begin{rmk}\label{pwm_def_rem}
We think of the partition of $\mathbb{S}^1$ into the closed arcs $\{I_j\}$ as a part of the data of the piecewise M{\"o}bius/analytic map $A$. This can be formalized by defining a piecewise M{\"o}bius/analytic map $A$ as a pair $\left(\{g_j\}_{j=1}^k,\{I_j\}_{j=1}^k\right)$.
\end{rmk}

Lemma \ref{pwa_is_pwm_lem} below upgrades the regularity of $A$
considerably.

\begin{lemma}\cite[Lemma~2.8]{mj-muk-1}\label{pwa_is_pwm_lem}
	Let $A:\mathbb{S}^1\to\mathbb{S}^1$ be a (not necessarily continuous) piecewise analytic map  that is orbit equivalent to a finitely generated Fuchsian group $\Gamma$. Then, $A$ is piecewise Fuchsian, and the pieces of $A$ form a generating set for $\Gamma$.
\end{lemma}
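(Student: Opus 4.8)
The plan is to establish the lemma in two stages: first, to upgrade each analytic piece $g_j$ to a M\"obius transformation lying in $\Gamma$ --- which at once gives that $A$ is piecewise Fuchsian with $\Gamma_A:=\langle g_1,\dots,g_k\rangle\leq\Gamma$ --- and second, to promote the orbit equivalence to the equality $\Gamma_A=\Gamma$. Both stages rest on the same elementary device: a set that is a priori a countable union of ``small'' sets (finite sets in the first stage, sets of at most two points in the second) cannot be uncountable, whereas both the pieces $I_j$ and the limit set $\Lambda$ on which the argument runs are uncountable. One may assume $\Gamma$ is non-elementary, so that $\Lambda$ is perfect; in the applications --- Bowen--Series and higher Bowen--Series maps --- $\Gamma$ is moreover a lattice, so it is cleanest to think of the case $\Lambda=\mathbb{S}^1$.

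For the first stage, fix a piece $I_j$, so that $A\vert_{I_j}=g_j$ with $g_j$ a holomorphic map $U_j\to\widehat{\mathbb{C}}$ on some connected neighborhood $U_j\supset I_j$. For every $x\in I_j\cap\Lambda$ one has $g_j(x)=A(x)\in\mathrm{GO}_A(x)=\Gamma\cdot x$, so $g_j(x)=\gamma(x)$ for some $\gamma\in\Gamma$; hence $I_j\cap\Lambda=\bigcup_{\gamma\in\Gamma}E_\gamma$ with $E_\gamma:=\{x\in I_j:g_j(x)=\gamma(x)\}$ the agreement set of the two holomorphic maps $g_j,\gamma\colon U_j\to\widehat{\mathbb{C}}$. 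By the identity theorem each $E_\gamma$ is either finite or forces $g_j\equiv\gamma$ on $U_j$. Since $\Gamma$ is countable while $I_j\cap\Lambda$ is uncountable (it is the whole arc $I_j$ when $\Lambda=\mathbb{S}^1$; in general $\Lambda$, being perfect, is locally uncountable at each of its points, so a piece meeting $\mathrm{Int}\,I_j\cap\Lambda$ contains an uncountable closed subset of $\Lambda$), not every $E_\gamma$ can be finite. Thus $g_j=\gamma_j$ for some $\gamma_j\in\Gamma$; so $A$ is piecewise M\"obius, every piece lies in $\Gamma$, and $\Gamma_A\leq\Gamma$ is discrete, hence Fuchsian, i.e.\ $A$ is piecewise Fuchsian.

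For the second stage, note first that since the pieces are $\gamma_1,\dots,\gamma_k$, a straightforward induction gives $A^m(x)=w(x)$ for some $w\in\Gamma_A$ (the word $w$ depending on which pieces the orbit of $x$ visits), so $\mathrm{GO}_A(x)\subseteq\Gamma_A\cdot x$ for every $x$. Combining this with orbit equivalence and $\Gamma_A\leq\Gamma$ yields, for all $x\in\Lambda$,
$$\Gamma\cdot x=\mathrm{GO}_A(x)\subseteq\Gamma_A\cdot x\subseteq\Gamma\cdot x,$$
so $\Gamma_A$ and $\Gamma$ have identical orbits on $\Lambda$. Now take any $\gamma\in\Gamma$. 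For each $x\in\Lambda$ there is $\delta_x\in\Gamma_A$ with $\gamma(x)=\delta_x(x)$, i.e.\ $x\in\mathrm{Fix}\bigl(\delta_x^{-1}\gamma\bigr)$, whence $\Lambda\subseteq\bigcup_{\delta\in\Gamma_A}\mathrm{Fix}(\delta^{-1}\gamma)$. If $\gamma\notin\Gamma_A$, then every $\delta^{-1}\gamma$ is a nontrivial element of the Fuchsian group $\Gamma$ and so fixes at most two points of $\mathbb{S}^1$ (two if hyperbolic, one if parabolic, none if elliptic); the right-hand side is then a countable union of sets of cardinality $\leq 2$, hence countable, contradicting the uncountability of $\Lambda$. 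Therefore $\gamma\in\Gamma_A$, so $\Gamma_A=\Gamma$ and the pieces of $A$ generate $\Gamma$.

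The conceptual content is light, and I expect the main obstacle to be the use of the identity theorem in the first stage. Since orbit equivalence constrains $A$ only on $\Lambda$, one needs $\Lambda$ to accumulate densely enough inside every piece that is to enter the generating set; this is automatic when $\Lambda=\mathbb{S}^1$ (the case of interest here), but in the general finitely generated setting one must first handle pieces that sit inside gaps of $\Lambda$, where $A$ is entirely unconstrained --- either by excluding them or by folding them into the bookkeeping of the partition. A secondary, routine point is to verify that $E_\gamma$ is genuinely the agreement set of two holomorphic maps on the connected open set $U_j$, so that the ``finite or identically equal'' dichotomy applies.
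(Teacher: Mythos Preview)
The paper itself does not prove this lemma; it merely cites \cite[Lemma~2.8]{mj-muk-1}. Your two-stage argument --- identity theorem plus a countability/Baire-type count to pin each analytic piece to an element of $\Gamma$, followed by the fixed-point counting argument to upgrade $\Gamma_A\leq\Gamma$ to equality --- is correct and is exactly the natural route; it is essentially what one expects the cited source to do as well. Your own caveat about pieces lying entirely in gaps of $\Lambda$ when $\Gamma$ is of the second kind is well taken: the lemma as stated really needs each $\Int I_j$ to meet $\Lambda$ in an uncountable set, which is automatic in the lattice case ($\Lambda=\mathbb{S}^1$) relevant to the rest of the paper. One cosmetic correction: your displayed equality $I_j\cap\Lambda=\bigcup_{\gamma}E_\gamma$ should be an inclusion $I_j\cap\Lambda\subseteq\bigcup_{\gamma}E_\gamma$, since $E_\gamma$ may well contain points outside $\Lambda$; only the inclusion is needed for the cardinality argument.
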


Suppose that $x_1, \cdots, x_k$ are a
cyclically ordered collection of $k$ points on $\bS^1$ defining the pieces $I_j=[x_j,x_{j+1}]$ of $A$ ($j+1$ taken modulo $k$).
	We shall say that $A$ is \emph{minimal}, if the decomposition of $\bS^1$ given by $x_1, \cdots, x_k$ is minimal;
	i.e., there does not exist $i$ and $h \in \Gamma_A$ such that
	\begin{enumerate}
		\item $A|_{[x_i,x_{i+1}]} = h|_{[x_i,x_{i+1}]}$, and
		\item $A|_{[x_{i-1},x_{i}]} = h|_{[x_{i-1},x_{i}]}$.
	\end{enumerate}
	Thus, a minimal $A$ has no superfluous break-points.

	Let $A$ be a continuous 
	piecewise M\"obius map on the circle. Let $\disk$ denote the unit disk.
	Let $I_1, \cdots, I_k$
	be a circularly ordered family of intervals with disjoint interiors such that
	\begin{enumerate}
		\item $I_j \cap I_{j+1} = \{x_{j+1}\}$ (the indices being taken mod $k$).
		\item $A |_{I_j} = g_j$.
	\end{enumerate}
	Let $\gamma_j$ be the semi-circular arc in $\disk$ between $x_{j}, x_{j+1}$ meeting $\bS^1$ at right angles at $x_{j}, x_{j+1}$, and let $\mathcal{D}_j \subset \bbar{\disk}$ be the closed region bounded by $I_j$ and $\gamma_j$. Then 
	$\widehat{A}$, the \emph{canonical extension of $A$ to a piecewise M{\"o}bius map in $\overline{\disk}$} 
	is defined on $\cup_j \mathcal{D}_j$ as $\widehat A = g_j$ on $\mathcal{D}_j$.

Set $\mathcal{D} :=\cup_j \mathcal{D}_j$ and call $\mathcal{D}$ the \emph{canonical domain of definition} of $\widehat A$.
	Let $R = \disk \setminus \mathcal{D}$. We shall call $R$  the 
	\emph{fundamental domain} of $A$, as well  as the fundamental domain of $\widehat{A}$.
	Each bi-infinite hyperbolic geodesic contained in the boundary $\partial R$ will be called an \emph{edge} of $R$.
	The ideal vertices of $R$ will be called the \emph{vertices} of $R$. Let $S$
	be the set of vertices of $R$.
A 	pair of non-adjacent points in $S$, or equivalently the bi-infinite geodesic joining them in $R$ will be called a \emph{diagonal} of $R$.

\begin{rmk}
We note that the fundamental domain of a piecewise Fuchsian map $A$ may not be a fundamental domain for the Fuchsian group $\Gamma_A$ generated by the pieces of $A$ (see Subsection~\ref{sec-cfm}).
\end{rmk}

\begin{rmk}\label{rmk-ai} Let $A:\bS^1\to\bS^1$ be a continuous piecewise 
	M\"obius map with pieces $\{g_j\}_{j=1}^k$. By continuity, $g_j(x_{j+1}) = g_{j+1}(x_{j+1})$; i.e., $a_j=g_j^{-1}\circ g_{j+1}\in\Gamma_A$ fixes $x_{j+1}$ (indices taken modulo $k$). 
	Then, $a_k\cdots a_1 =1$ as a group element, or equivalently, $a_1\circ\cdots\circ a_k=\mathrm{id}$. 
	Moreover, if $A$ is orbit equivalent to a Fuchsian group $\Gamma$ on $S^1$, then
	$\Gamma$ is generated by $\{g_1, a_1, \cdots, a_k\}$ by Lemma \ref{pwa_is_pwm_lem}.
\end{rmk}

	A continuous map $f\colon \mathbb{S}^1\to \mathbb{S}^1$ is said to be \emph{expansive} if there exists $\delta>0$ such that for any $a \neq b\in \mathbb{S}^1$, there exists $n\in \mathbb{N}$ such that $d(f^{ n}(a), f^{ n}(b))>\delta$.

We endow $\mathbb{S}^1$ with the counter-clockwise orientation. For $a, b\in\bS^1$, we denote the counter-clockwise arc of $\bS^1$ connecting $a, b$ by $\arc{ab}$.
Suppose that $y_0$ is a \textit{periodic point} of period $n$ of a piecewise M{\"o}bius covering map $A: \mathbb{S}^1\to \mathbb{S}^1$. Then, $A^{ n}$ is orientation-preserving, and it maps an arc of the form $\arc{y_1y_0}$ to an arc of the form $\arc{y_2 y_0}$. 
We define the \textit{one-sided multipliers} of $A$ at $y_0$ to be the one-sided derivatives of $A^{ n}$:
$$
(A^{ n})'(y_0^+)= \lim_{\substack{y\to y_0\\ \tiny{y\in \arc{y_0\widetilde{y}}}}} \frac{A^{ n}(y)-y_0}{y-y_0},\ \quad\ 
(A^{ n})'(y_0^-)=  \lim_{\substack{y\to y_0\\ \tiny{y\in \arc{\widetilde{y} y_0}}}} \frac{A^{ n}(y)-y_0}{y-y_0},
$$
where $\widetilde{y}\neq y_0$ is any point on $\mathbb{S}^1$. See \cite[Section 2]{mj-muk-1} for properties of one-sided  multipliers of $A$.

	Let $x$ be a periodic point (of period $n$)  of a piecewise M{\"o}bius, expansive circle covering $A$. Then $x$ is said to be \textit{parabolic on the right} (resp., \textit{on the left}) if $(A^{ n})'(x^+)=1$ (resp., $(A^{ n})'(x^-)=1$). Likewise, $x$ is \textit{hyperbolic on the right} (respectively, \textit{on the left}) if $(A^{ n})'(x^+)>1$ (resp., $(A^{ n})'(x^-)>1$). 
	Also, $x$ is \textit{symmetrically parabolic} (respectively, \textit{symmetrically hyperbolic}) if $(A^{ n})'(x^+)=(A^{ n})'(x^-)=1$ (respectively, if $(A^{ n})'(x^+)=(A^{ n})'(x^-)>1$). 
	The point
	$x$ is called \textit{asymmetrically hyperbolic} if it is hyperbolic on both sides, but $(A^{ n})'(x^+)\neq (A^{ n})'(x^-)$.
		Finally, $x$ is said to be a \textit{periodic point of mixed type} if it is hyperbolic on one side, but parabolic on the other.

\begin{lemma}\cite[Lemma~2.15]{mj-muk-1}\label{no_mixed}
	Let $A:\mathbb{S}^1\to\mathbb{S}^1$ be a piecewise Fuchsian expansive covering map having $x_1,\cdots, x_k$ as the break-points of its piecewise definition. Further, let $x_j$ be a periodic point of $A$. Then, $x_j$ is not of mixed type.
\end{lemma}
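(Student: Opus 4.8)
The plan is to show that a mixed-type periodic break-point forces the Fuchsian group $\Gamma_A$ to contain a parabolic and a hyperbolic element with a common fixed point on $\mathbb{S}^1$, which is impossible by discreteness. The first and main step is to identify the one-sided germs of $A^n$ at $x_j$ with M\"obius elements of $\Gamma_A$. Since every piece $g_j$ lies in $\textrm{Aut}(\disk)$, the covering map $A$ is orientation-preserving, and hence so is $A^n$; as $A^n(x_j)=x_j$, the map $A^n$ carries a one-sided germ at $x_j$ into a one-sided germ at $x_j$ on the \emph{same} side. Following the forward orbit $x_j, A(x_j),\dots, A^{n-1}(x_j)$, on a sufficiently small left germ at $A^i(x_j)$ the map $A$ agrees with a single piece $p_{i+1}\in\Gamma_A$ (the piece immediately to the left if $A^i(x_j)$ is a break-point, and the unique piece containing that germ otherwise), and $p_{i+1}$, being orientation-preserving, sends the left germ at $A^i(x_j)$ to the left germ at $A^{i+1}(x_j)$. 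Composing, on a left germ at $x_j$ one has $A^n=h^-:=p_n\circ\cdots\circ p_1\in\Gamma_A$ with $h^-(x_j)=x_j$; the analogous construction on the right produces $h^+\in\Gamma_A$ with $h^+(x_j)=x_j$. Directly from the definition of one-sided multipliers, $(A^n)'(x_j^-)=(h^-)'(x_j)$ and $(A^n)'(x_j^+)=(h^+)'(x_j)$, where the right-hand sides are the ordinary derivatives (equivalently, the multipliers) of the M\"obius maps $h^\mp$ at their fixed point $x_j$; cf. \cite[Section~2]{mj-muk-1}.

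Next I would rule out $h^\pm=\mathrm{id}$ using expansiveness. If, say, $h^-=\mathrm{id}$, then $A^n$ restricts to the identity on a nondegenerate arc $J$ with endpoint $x_j$; then for any $a\neq b$ in $J$ the sequence $m\mapsto d(A^m(a),A^m(b))$ is $n$-periodic, hence bounded by $\max_{0\le m<n} d(A^m(a),A^m(b))$, and by continuity of $A,\dots,A^{n-1}$ this maximum can be made smaller than any prescribed $\delta>0$ by choosing $a,b$ close enough, contradicting expansiveness. So $h^+$ and $h^-$ are nontrivial elements of $\Gamma_A$, each fixing $x_j\in\mathbb{S}^1$; consequently each of them is either parabolic or hyperbolic, and a nontrivial such element is parabolic precisely when its derivative at $x_j$ equals $1$ and hyperbolic precisely when that derivative is different from $1$.

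Now suppose, for contradiction, that $x_j$ is of mixed type, say parabolic on one side and hyperbolic on the other. Then one of $(A^n)'(x_j^\pm)$ equals $1$ while the other is $>1$, so by the previous paragraph one of $h^\pm$ is parabolic and the other is hyperbolic, both fixing $x_j$ and both lying in $\Gamma_A$. But a discrete subgroup of $\textrm{Aut}(\disk)$ cannot contain a parabolic $P$ and a hyperbolic $H$ with a common fixed point: conjugating that fixed point to $\infty$ in the upper half-plane model, $P$ is $z\mapsto z+s$ and $H$ is $z\mapsto\lambda z+\mu$ with $s\neq 0$, $\lambda>0$, $\lambda\neq 1$, and a short computation gives $H^kPH^{-k}\colon z\mapsto z+\lambda^k s$; as $k\to+\infty$ or $k\to-\infty$ (according as $\lambda<1$ or $\lambda>1$) these are distinct nontrivial elements converging to the identity, contradicting discreteness. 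Since $\Gamma_A$ is Fuchsian, this is impossible, so $x_j$ is not of mixed type.

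I expect the only genuinely delicate point to be the bookkeeping of the first paragraph: verifying that $A^n$ near $x_j$ on each side is given by a single element of $\Gamma_A$ fixing $x_j$, and that this element realizes the corresponding one-sided multiplier. Once this local normal form is in place, the argument is driven entirely by the classical fact that a parabolic and a hyperbolic element sharing a fixed point generate a non-discrete subgroup of $\textrm{Aut}(\disk)$.
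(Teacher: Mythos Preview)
Your proof is correct and follows what is essentially the only natural route: identify the left and right germs of $A^{n}$ at $x_j$ with elements $h^{-},h^{+}\in\Gamma_A$ fixing $x_j$, use expansiveness to rule out $h^{\pm}=\mathrm{id}$, and then invoke the standard discreteness obstruction that a Fuchsian group cannot contain a parabolic and a hyperbolic sharing a fixed point. This expository paper does not reproduce the proof (it simply cites \cite[Lemma~2.15]{mj-muk-1}), but the argument there is the same in substance; your bookkeeping in the first paragraph and the $H^{k}PH^{-k}$ computation are exactly what is needed.
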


\begin{defn}\label{markov_def}
Let $X$ be a topological space and $f : X \rightarrow X$ be a continuous map. A collection of closed subsets $\lbrace X_1, X_2, \cdots X_n\rbrace$ of $X$ is called a Markov partition for $(X,f)$ if the following properties are satisfied:
\begin{enumerate}
\item $X=\cup_{i=1}^n X_i$,
\item $\Int{X_i} \cap \Int{X_j} = \emptyset$ for $i\neq j$,
\item $\overline{\Int{X_i}} = X_i$ for $i\in\{1,2,\cdots,n\}$,
\item $f\vert_{X_i}$ is injective, and
\item if $f(\Int{X_i}) \cap \Int{X_j} \neq \emptyset$, then $f(X_i) \supset X_j$.
\end{enumerate}
\end{defn}
It is well-known that continuous, open and distance expanding self-maps of compact metric spaces admit Markov partitions (see \cite[\S 3]{PU}). In particular, the polynomial map $z \mapsto z^d$, restricted to the unit circle $\mathbb{S}^1$, admits a Markov partition (in fact, explicit Markov partitions for $z^d$ can be easily constructed).

	We call $A: \bS^1 \to \bS^1$ a \emph{\pwfm} map if it is a piecewise Fuchsian expansive covering map (of degree $d$ at least two) such that the pieces $I_j$ (intervals of definition) of $A$ in $\bS^1$ give a Markov partition for $A: \bS^1 \to \bS^1$. The restrictions $A|_{I_j} = g_j (\in \Gamma_A)$ of $A$ to  $I_j$ will be  referred to 
 as \emph{pieces} of $A$.

By the Markov property of $A$, each interval $I_j$
has exactly $d$ pre-images under $A$.
This gives us a natural  transition matrix for $A^{-1}$ given by $a_{jl} = 1$ if there exists a point in the interior of $I_l$ mapped to $I_j$ under $A$, and $a_{jl}=0$ otherwise. Further, there is a naturally associated
topological Markov chain, which we now describe (compare \cite{series-etds, thurston-wordp}). We construct a $d-$regular
directed graph $\GG$ with $k$ vertices (one for each $I_j$) and a directed
edge from vertex $j$ to vertex $l$ if and only if $a_{jl} = 1$. Further, we label such a directed edge from $j$ to $l$ by $g_l^{-1}$ (since the piece of $A$ on $I_l$ is $g_l$, the inverse branch from $I_j$ to $I_l$ is $g_l^{-1}$). Note that there are exactly $d$ \emph{branches} of $A^{-1}$ at each interior point of an $I_j$ and any such  branch is given by the inverse of one of the pieces of $A$; i.e., for each piece
$g_i$ of $A$, $g_i^{-1}$ is a label of some edge of $\GG$ and each label of an edge of $\GG$ is of this form.  

We now follow a point $z\in \bS^1$ under backward iteration of $A$. Let $\{z=z_0, z_1, \cdots\}$
be a (finite or infinite) sequence of points in $\bS^1$ such that $A(z_{i+1})= z_i$. Then any such sequence
encodes a geodesic in $\GG$; i.e., an isometric immersion of an interval $[0,a]$, or $[0, \infty)$ into $\GG$ such that
$[i,i+1]$ maps isometrically to an edge of $\GG$ labeled by (the unique) $g$ satisfying by the following:
\begin{enumerate}
	\item $z_i\in I_{j(i)}$.
	\item $z_{i+1}\in I_{j(i+1)}$.
	\item $A$ restricted to $I_{j(i+1)}$ equals $g^{-1}$.
	\item $g(z_i) = z_{i+1}$.
\end{enumerate}

The labeled directed graph $\GG$ (also known as a topological Markov chain) imposes a structure
akin to that of 
an automatic group \cite{thurston-wordp} on backward orbits of points via
backward orbits of intervals $I_j$. Thus, a sequence of backward orbits of an interval $I_j$ may be given by $I_j = I_{j(0)}, I_{j(1)}, \cdots, I_{j(n)}, \cdots$ such that $I_{j(i)} \subset A(I_{j(i+1)})$.
This sequence is also encoded by the same geodesic in $\GG$ described above, since the pair $\{I_{j(i)}, I_{j(i+1)}\}$ corresponds to a unique edge
in $\GG$, and the label on the edge is the unique $g\in \Gamma$ such that $g^{-1}$ is a piece of $A$
satisfying $I_{j(i)} \subset g^{-1}(I_{j(i+1)})$.

A more concise version of Definition \ref{def-mateable} can now be furnished by saying that 

\begin{defn}\label{def-mateable2}
	A \pwfm map $A: \bS^1 \to \bS^1$ is  \emph{mateable} if
	$A$ is orbit equivalent to the Fuchsian group $\Gamma_A$ generated by its pieces, and
		 none of the periodic break-points of $A$ are asymmetrically hyperbolic.
\end{defn}

We finally provide the lemma promised before Definition \ref{def-mateable} guaranteeing that 
$\Gamma_A$ is a lattice.
\begin{lemma}\cite[Lemma~2.18]{mj-muk-1}\label{lem-gammalattice}
	If $A$ is mateable, then $\Gamma_A$ is a lattice (or equivalently, $\Gamma_A$ is a finitely generated Fuchsian group such that $\disk/\Gamma_A$ has finite hyperbolic area).
\end{lemma}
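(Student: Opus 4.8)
The plan is to exploit the Markov and expansiveness hypotheses to show that $A$ is a $d$-fold covering whose inverse branches generate a free semigroup acting cocompactly (modulo the cusps) on the convex core, and then to transfer this to a finite-area statement about $\disk/\Gamma_A$. First I would recall from Lemma \ref{pwa_is_pwm_lem} and Remark \ref{rmk-ai} that $\Gamma_A$ is finitely generated, being generated by $\{g_1,a_1,\dots,a_k\}$; so the substantive point is finiteness of the co-area. I would look at the canonical domain $\DD=\cup_j\DD_j$ and the fundamental domain $R=\disk\setminus\DD$ of $\widehat A$. The key observation is that the edges of $R$ are bi-infinite geodesics $\gamma_j$, and the side-pairing-like relations $a_j=g_j^{-1}\circ g_{j+1}$ fixing $x_{j+1}$ together with the cocycle relation $a_k\cdots a_1=1$ say that $R$ behaves like a finite-sided fundamental polygon with vertices on the circle. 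The content of mateability that must be used is precisely that the break-points are not asymmetrically hyperbolic (and, by Lemma \ref{no_mixed}, not of mixed type): this forces each periodic vertex $x_{j+1}$ of $R$ to be either symmetrically parabolic or symmetrically hyperbolic under the relevant power of $a_j$, i.e.\ $a_j$ is either parabolic (a cusp) or hyperbolic.

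The heart of the argument, as I see it, is a Poincaré-polyhedron-theorem style statement: the region $R$, together with the pairings of its edges coming from the $g_j$'s and the relations on its vertices, is an (ideal or hyperideal) finite-sided polygon with a complete hyperbolic structure on the quotient. Concretely, I would argue that $\disk\setminus\Lambda$ is tiled by the $\Gamma_A$-translates of a compact-core-plus-cusps piece built from $R$: the covering condition ``$A$ maps each $I_j$ onto a union of $I_l$'s, and $A|_{I_j}=g_j$'' means $g_j^{-1}$ carries a block of the partition into $I_j$, so iterating the inverse branches refines the partition of $\Lambda$ down to points (this is where \emph{expansiveness} enters — it guarantees the diameters of the cylinder sets shrink to $0$, hence $\Lambda$ has no wandering intervals and the coding is faithful). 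Orbit equivalence then upgrades ``$A$-grand-orbit'' to ``$\Gamma_A$-orbit'', so the combinatorial tiling of $\Lambda$ by backward $A$-images of the $I_j$ is genuinely the $\Gamma_A$-action. Running this backward across the disk, the $\Gamma_A$-translates of $R$ cover $\disk$ with the $\DD_j$ as the ``bricks'', and since there are only finitely many isometry types of brick (namely $k$ of them, and $\widehat A=g_j$ on $\DD_j$ with $g_j\in\Gamma_A$), the quotient $\disk/\Gamma_A$ is covered by finitely many images of $R$. Finiteness of area then reduces to: $R$ has finite area away from its vertices (true since $R$ is a finite-sided hyperbolic polygon — finitely many geodesic edges $\gamma_j$ — and such a polygon has finite area), and each vertex contributes either a genuine cusp of finite area (when the corresponding $a_j$ is parabolic) or nothing extra (when $a_j$ is hyperbolic, the vertex is a point of $\Lambda$, not an ideal point of the quotient surface).

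I would then assemble these pieces: (i) $\Gamma_A$ is discrete and finitely generated (from Lemma \ref{pwa_is_pwm_lem}); (ii) a fundamental domain for $\Gamma_A$ is contained in a finite union of $\Gamma_A$-translates of $R$, hence has finite hyperbolic area outside neighborhoods of the vertices; (iii) the non-asymmetrically-hyperbolic and non-mixed conditions (Lemma \ref{no_mixed}) classify each vertex as parabolic or hyperbolic, so the only ends of $\disk/\Gamma_A$ are finitely many finite-area cusps; therefore $\mathrm{Area}(\disk/\Gamma_A)<\infty$, which is the definition of a lattice. Conversely the equivalence ``finitely generated Fuchsian with finite co-area $\iff$ lattice'' is standard, so this finishes the proof.

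The main obstacle, and the place I expect the real work to be, is step (ii): showing that the $\Gamma_A$-translates of $R$ actually tile $\disk$ (equivalently, that $R$ is a true fundamental domain for $\Gamma_A$ up to the subtlety flagged in the Remark after the definition of the fundamental domain, where it is explicitly warned that $R$ \emph{need not} be a fundamental domain for $\Gamma_A$). One has to reconcile the fact that the pieces $I_j$ give a Markov partition for $A$ (a degree-$d$ map) with the fact that we want a \emph{group} tiling, not a semigroup one; the resolution is presumably that the overlaps/foldings are absorbed by the relations $a_j$ and the cocycle identity, so that after passing to the group the covering multiplicity collapses. Making this precise — i.e.\ proving a version of the Poincaré polygon theorem adapted to the ``folded''/``higher-degree'' situation of \cite{mj-muk-1} — is the crux; everything else is bookkeeping with hyperbolic areas of ideal polygons and horocyclic cusp neighborhoods.
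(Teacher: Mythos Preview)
Your approach has a genuine gap, and it is precisely the one you flag yourself: step (ii). You are trying to run a Poincar\'e-polygon argument with $R$, but the paper explicitly warns (in the remark immediately following the definition of the fundamental domain of $A$) that $R$ need \emph{not} be a fundamental domain for $\Gamma_A$ --- and indeed for the higher Bowen-Series maps of Section~\ref{sec-cfm}, $R$ is the union of \emph{several} $\Gamma_A$-translates of a true fundamental domain $W$. So the $\Gamma_A$-translates of $R$ overlap rather than tile, and your ``collapsing the covering multiplicity via the cocycle relation'' sketch does not repair this: those relations $a_k\cdots a_1=1$ encode continuity of $A$, not a side-pairing scheme for $R$. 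You also invoke the non-asymmetrically-hyperbolic hypothesis to classify the cusps, but that hypothesis plays no role in the lemma --- another sign the argument is going the wrong way.

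The intended proof (the paper only cites \cite[Lemma~2.18]{mj-muk-1}; no argument is reproduced here) is much shorter and uses only conditions~(\ref{oe}) and~(\ref{exp}) of Definition~\ref{def-mateable}. Since $A$ is an expansive covering of $\bS^1$ of degree $d\geq 2$, it is topologically conjugate to $z^d$, so the grand orbit of every point of $\bS^1$ is dense. Orbit equivalence then says every $\Gamma_A$-orbit on $\bS^1$ is dense, hence the limit set of $\Gamma_A$ is all of $\bS^1$; i.e.\ $\Gamma_A$ is a Fuchsian group of the first kind. By Lemma~\ref{pwa_is_pwm_lem} it is finitely generated, and a finitely generated Fuchsian group of the first kind is a lattice --- this last step is classical (Siegel). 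No polygon bookkeeping is needed.
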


For a complex polynomial $P$, its \emph{filled Julia set} $\mathcal{K}(P)$ is the completely invariant set of all points whose forward orbits (under $P$) stay bounded. A polynomial is said to be \emph{hyperbolic} if each of its critical points converges to an attracting cycle under forward iteration. The set of all hyperbolic polynomials (of a given degree) is open in the parameter space. A connected component of degree $d$ hyperbolic polynomials is called a \emph{hyperbolic component} in the parameter space of degree $d$ polynomials. The hyperbolic component of degree $d$ polynomials containing the map $z^d$ is called the \emph{principal hyperbolic component}, and is denoted by $\mathcal{H}_d$. The filled Julia set of each map in $\mathcal{H}_d$ is a quasidisk, and the dynamics of such a map on its Julia set is quasisymmetrically conjugate to the action of $z^d$ on $\bS^1$.

The next proposition says that the conditions of Definition \ref{def-mateable2} are sufficient to guarantee conformal mateability of \pwfm maps and polynomials in principal hyperbolic components.

For a Jordan curve $\mathfrak{J}$ on the Riemann sphere, we denote its complementary components by $\mathbf{D}^\textrm{in}$ and $\mathbf{D}^\textrm{out}$. The canonical extension $\widehat{A}:\mathcal{D}\to\overline{\disk}$ of a mateable map is said to be \emph{conformally mateable} with a polynomial $P$ in a principal hyperbolic component if there exist a holomorphic map $F$ defined on a subset of $\widehat{\C}$, a Jordan curve $\mathfrak{J}\subset\mathrm{Dom}(F)$, and a pair of conformal maps $\phi^\textrm{in}:\overline{\disk}\to\overline{\mathbf{D}^\textrm{in}}$ and $\phi^\textrm{out}:\mathcal{K}(P)\to\overline{\mathbf{D}^\textrm{out}}$ that conjugate $\widehat{A}$ and $P$ (respectively) to $F$. The following is the first main result of \cite{mj-muk-1}.

\begin{prop}[\bf{Mateable maps are mateable}]\cite[Proposition~2.23]{mj-muk-1}\label{conformal_mating_general_prop}
	Let $A:\mathbb{S}^1\to\mathbb{S}^1$ be a mateable map of degree $d$, and $P\in\mathcal{H}_d$.
	Then, the maps $\widehat{A}:\mathcal{D}\to\overline{\disk}$ and $P:\mathcal{K}(P)\to\mathcal{K}(P)$ are conformally mateable. 
\end{prop}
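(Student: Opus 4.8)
The plan is to realise $\widehat{A}$ and $P$ as the two ``halves'' of a single holomorphic map on the Riemann sphere, following the Douady--Hubbard mating paradigm: build a topological model by gluing the two boundary dynamics, then straighten it by a quasiconformal surgery (replaced by a David surgery near parabolic periodic points). The first step is to linearise both boundary dynamics. Write $m_d\colon z\mapsto z^d$. Since $A$ is an orientation-preserving expansive degree-$d$ covering of $\bS^1$ carrying the Markov partition $\{I_j\}$, the classical theory of expansive circle covers gives a topological conjugacy $h\colon\bS^1\to\bS^1$ with $h\circ A=m_d\circ h$; and since $P\in\mathcal{H}_d$, the filled Julia set $\mathcal{K}(P)$ is a quasidisk, $J(P):=\partial\mathcal{K}(P)$ is a quasicircle, and the B\"ottcher coordinate furnishes a conjugacy $\eta\colon J(P)\to\bS^1$ of $P\vert_{J(P)}$ with $m_d$, quasisymmetric away from parabolic periodic points. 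Then $g:=\eta^{-1}\circ h$ conjugates $A\vert_{\bS^1}$ to $P\vert_{J(P)}$.

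Next I would form the topological sphere $\Sigma$ by gluing $\overline{\disk}$ (as the ``inside'') to $\mathcal{K}(P)$ (as the ``outside'') along $z\sim g(z)$ for $z\in\bS^1=\partial\disk$, and let $\mathcal{C}\subset\Sigma$ be the image of $\bS^1=J(P)$. Because $g$ intertwines the two boundary dynamics, the prescription ``$\mathbf{F}=\widehat{A}$ on the image of $\mathcal{D}$, and $\mathbf{F}=P$ on the image of $\mathcal{K}(P)$'' defines a continuous map $\mathbf{F}$ on $\Sigma\setminus R$, where $R=\disk\setminus\mathcal{D}$ is the open ideal polygon; as the geodesics $\gamma_j$ meet $\bS^1$ orthogonally, $\mathcal{D}$ is a collar of $\bS^1$ in $\overline{\disk}$, so $\Sigma\setminus R$ is a neighborhood of $\mathcal{C}$. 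Away from $\mathcal{C}$ the map $\mathbf{F}$ is already holomorphic: it is the M\"obius piece $g_j$ on each lens $\mathcal{D}_j$, and it is $P$ on $\Int\mathcal{K}(P)$. The key structural point to record is that $\mathcal{C}$ is completely invariant under $\mathbf{F}$: on the polynomial side $P^{-1}(J(P))=J(P)$, while on the disk side $\widehat{A}^{-1}(\bS^1)=\bS^1$ because every piece $g_j\in\mathrm{Aut}(\disk)$ carries $\bS^1$ onto $\bS^1$ and $\disk$ onto $\disk$. Hence the grand orbit of $\mathcal{C}$ equals $\mathcal{C}$, a set of zero area, which is the only locus where $\mathbf{F}$ can fail to be conformal.

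I would then straighten. Since $g$ is quasisymmetric, the conformal welding problem for $\mathcal{C}$ has a solution: there are a Jordan curve $\mathfrak{J}\subset\widehat{\C}$ with complementary components $\mathbf{D}^{\mathrm{in}},\mathbf{D}^{\mathrm{out}}$ and conformal maps $\phi^{\mathrm{in}}\colon\overline{\disk}\to\overline{\mathbf{D}^{\mathrm{in}}}$ and $\phi^{\mathrm{out}}\colon\mathcal{K}(P)\to\overline{\mathbf{D}^{\mathrm{out}}}$ agreeing along $\mathcal{C}$; equivalently, $\Sigma$ acquires a complex structure making $\Phi:=\phi^{\mathrm{in}}\cup\phi^{\mathrm{out}}$ a biholomorphism onto $\widehat{\C}$ that is conformal on $\Int\disk$ and $\Int\mathcal{K}(P)$. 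Setting $F:=\Phi\circ\mathbf{F}\circ\Phi^{-1}$ and $\mathfrak{J}:=\Phi(\mathcal{C})$, the map $F$ is continuous on the neighborhood $\Phi(\Sigma\setminus R)$ of $\mathfrak{J}$ and holomorphic off $\mathfrak{J}$. To promote this to holomorphy on all of $\mathrm{Dom}(F)$, I would invoke conformal removability: because $P\in\mathcal{H}_d$, the Julia set $J(P)$ bounds John domains on both sides, hence is conformally removable, and $\mathfrak{J}$, being a quasiconformal image of $J(P)$, is removable as well. By construction $\phi^{\mathrm{in}}$ and $\phi^{\mathrm{out}}$ are then the required conformal maps conjugating $\widehat{A}$ and $P$ respectively to $F$, with $\mathfrak{J}\subset\mathrm{Dom}(F)$.

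The hard part will be the welding near the images in $\mathcal{C}$ of the periodic break-points $x_1,\dots,x_k$ of $A$, where $\widehat{A}$ is not smooth yet must be straightened to a holomorphic germ compatible with the repelling periodic germ of $P$ at the matched point of $J(P)$. This is exactly where the hypotheses of Definition~\ref{def-mateable2} enter. Lemma~\ref{no_mixed} rules out periodic break-points of mixed type; the prohibition on asymmetrically hyperbolic periodic break-points guarantees that at a hyperbolic periodic vertex the two adjacent M\"obius pieces share a common one-sided multiplier for the first-return map, so that the local dynamics is quasiconformally conjugate to a holomorphic germ and an ordinary quasiconformal welding applies; and symmetrically parabolic periodic vertices force one to run the surgery with David homeomorphisms in place of quasiconformal ones, producing cusps of $\mathfrak{J}$ at those points. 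Checking that these local models patch into a single $\mathbf{F}$-invariant structure of globally bounded dilatation (respectively bounded David distortion), so that the measurable Riemann mapping theorem or its David analogue applies, is the technical core of \cite[Proposition~2.23]{mj-muk-1}.
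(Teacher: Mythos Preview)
Your outline has the right overall shape, but there is a genuine gap at the welding step. You assert ``since $g$ is quasisymmetric, the conformal welding problem for $\mathcal{C}$ has a solution,'' and then invoke removability of a quasicircle. But the whole point of the remark following the proposition is that the conjugacy $h$ between $A$ and $m_d$ is \emph{not} quasisymmetric whenever $A$ has symmetrically parabolic periodic break-points (which are generically present for punctured-sphere Bowen--Series maps): near such a point $m_d$ is uniformly expanding while $A$ has multiplier $1$, so $h$ fails the bounded-ratio test. Consequently $g=\eta^{-1}\circ h$ is not quasisymmetric, $\mathfrak{J}$ need not be a quasicircle (it will have inward cusps), and your removability claim ``$\mathfrak{J}$, being a quasiconformal image of $J(P)$, is removable'' breaks down. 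Treating David as a local patch at the parabolic vertices, as you do in the last paragraph, does not repair this: the failure of quasisymmetry is a global feature of the welding homeomorphism, not a localized singularity.

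The paper's approach is different in mechanism. Rather than welding along $g$ and then appealing to removability, one uses a \emph{David extension theorem} \cite[Theorem~4.9]{LMMN} in place of Ahlfors--Beurling: the circle homeomorphism $h^{-1}$ (conjugating $m_d$ to $A$) extends to a David homeomorphism $H$ of $\overline{\disk}$ precisely because no periodic break-point is asymmetrically hyperbolic---this hypothesis controls the scalewise distortion of $h$ so that the extended Beltrami coefficient satisfies the David condition. One then pulls back the standard structure by a globally defined David map built from $H$ on one side and the B\"ottcher coordinate on the other, and applies the \emph{David integrability theorem} \cite{David88} as a substitute for the measurable Riemann mapping theorem. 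Holomorphy across the resulting curve $\mathfrak{J}$ comes from $W^{1,1}$-removability results for David limit curves (also in \cite{LMMN}), not from John-domain removability of quasicircles. So the missing idea in your argument is that David is not a local fix but the global analytic engine replacing the quasiconformal machinery end to end, and the ``no asymmetrically hyperbolic break-points'' condition enters as the exact hypothesis needed for the David extension of the circle conjugacy, not merely as a matching-of-multipliers condition on local germs.
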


\begin{rmk}
	A mateable map may have parabolic fixed points on $\bS^1$, and hence the topological conjugacy between $z^d$ and $A$ is not necessarily quasisymmetric. This renders classical quasiconformal tools (such as the ones used in the proof of Bers simultaneous uniformization
	theorem) insufficient for the purpose of conformally mating polynomials with mateable maps associated with Fuchsian groups. However, an appropriate class of `generalized quasiconformal maps', called David homeomorphisms (maps with suitable Sobolev regularity satisfying a quantitative control on the area of the region where the dilatation blows up), allows one to perform the conformal mating construction. Two results that lie at the analytic heart of the proof of Proposition~\ref{conformal_mating_general_prop} are the David integrability theorem (this can be seen as a generalization of the measurable Riemann mapping theorem, see \cite{David88}, \cite[Theorem~20.6.2]{AIM09}) and a David extension theorem for certain circle homeomorphisms (which plays the role of the Ahlfors-Beurling extension theorem in the current setting, see \cite[Theorem~4.9]{LMMN}). In fact, Item~\ref{asym_hyp} in Definition \ref{def-mateable} is required to guarantee the existence of a David extension of a circle homeomorphism conjugating $z^d$ to a mateable map.
\end{rmk}

\section{Bowen-Series maps of Fuchsian punctured sphere groups}\label{bs_sec}

\subsection{Bowen-Series maps for General Fuchsian groups} Archetypal
examples of \pwfm maps of the circle that are orbit equivalent to finitely generated Fuchsian groups are given by \emph{Bowen-Series maps}. These first  appeared in the work of Bowen and Series \cite{bowen,bowen-series}.

A finitely generated Fuchsian group $\Gamma$ (of the first kind) admits a fundamental domain $R\left(\subset\disk\right)$ that is a (possibly ideal) hyperbolic polygon. Denote the edges of $R$ by $\{s_i\}_{i=1}^n$ (labeled in counter-clockwise order around the circle). Each edge $s_i$ of $R$ is identified with another edge $s_j$ by a corresponding element $h(s_i)\in\Gamma$. The set $\{h(s_i)\}_{i=1}^n$ forms a generating set for $\Gamma$.

Let $C(s_i)$ be the Euclidean circular arc in $\disk$ containing $s_i$  and meeting  $\bS^1$ orthogonally. Further, let $\mathcal{N}$ be the net in $\disk$ consisting of all images of edges of $R$ under elements of $\Gamma$. The fundamental domain $R$ is said to satisfy the \emph{even corners} property if $C(s_i)$ lies completely in $\mathcal{N}$, for $i\in\{1,\cdots, n\}$.

\begin{defn}[\bf{Bowen-Series map}]\label{b_s_map_def}
	Suppose that a fundamental domain $R$ of $\Gamma$ satisfies the even corners property. Label (following \cite{bowen-series}) the endpoints of $C(s_i)$ on $\bS^1$, $P_i, Q_{i+1}$ (with $Q_{n+1}=Q_1$) with $P_i$ occurring before $Q_{i+1}$ in the counter-clockwise order. These points occur along the circle in the order $P_1, Q_1, P_2, Q_2$, $\cdots$, $P_n, Q_n$ (see Figure~\ref{closed_bs_fig}). The \emph{Bowen-Series map} $A_{\Gamma, \textrm{BS}}:\bS^1\to\bS^1$ of $\Gamma$ (associated with the fundamental domain $R$) is defined piecewise as $A_{\Gamma, \textrm{BS}}\equiv h(s_i)$, on the sub-arc $[P_i, P_{i+1})$ of $\bS^1$ (traversed in the counter-clockwise order).
\end{defn}

\begin{prop}\cite[Lemma~2.4]{bowen-series}\label{orbit_equiv_prop}
	The map $A_{\Gamma, \textrm{BS}}$ is orbit equivalent to $\Gamma$, except (possibly) at finitely many  points modulo the action of $\Gamma$.
\end{prop}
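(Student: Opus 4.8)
The plan is to show that for $x \in \bS^1$, the forward $A_{\Gamma,\mathrm{BS}}$-orbit of $x$ and the $\Gamma$-orbit of $x$ determine each other, up to the understood finite exceptional set. First I would set up the tessellation picture: the net $\NN$ of $\Gamma$-images of the edges of $R$ cuts $\disk$ into tiles, each a copy of $R$; by the even corners hypothesis, each circular arc $C(s_i)$ (hence each $C(gs_i)$) lies entirely in $\NN$, so these complete geodesics cut $\disk$ into regions compatible with the tessellation. The key geometric observation is that the arc $[P_i, P_{i+1})$ on which $A_{\Gamma,\mathrm{BS}} = h(s_i)$ is exactly the set of endpoints of geodesics leaving $R$ ``through the side $s_i$'' in an appropriate sense, so that applying $h(s_i)$ is the natural first step in a nearest-neighbor walk on the tessellation from the tile $R$ toward $x$.

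The core of the argument is then the standard Bowen--Series shortening/contraction lemma: for $x \in \bS^1$ not in the finite bad set, consider the geodesic ray $\rho_x$ from a fixed basepoint $o \in R$ to $x$, and let $g_1, g_2, \dots$ be the sequence of tiles $R, g_1 R, g_2 R, \dots$ that $\rho_x$ passes through. One shows that $A_{\Gamma,\mathrm{BS}}(x) = g_1^{-1} x$ (or some bounded-index relative of it), i.e.\ one application of the Bowen--Series map corresponds to crossing one wall of the tessellation and re-centering. Iterating, $A_{\Gamma,\mathrm{BS}}^n(x) = g_n^{-1}\cdots g_1^{-1} x \in \Gamma \cdot x$, which gives $\mathrm{GO}_A(x) \subset \Gamma \cdot x$ (in fact the forward orbit already lands in $\Gamma\cdot x$). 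Conversely, since the pieces $h(s_i)$ generate $\Gamma$ and the walk exhausts all tiles — every $g \in \Gamma$ arises as a prefix $g_n \cdots g_1$ for suitable $x$, by the even corners property the coding is onto — one gets $\Gamma\cdot x \subset \mathrm{GO}_A(x)$, after possibly comparing two forward orbits via a common image. I would quote \cite[Lemma~2.4]{bowen-series} for the precise bookkeeping here.

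The finitely-many-exceptions clause comes from the points where this clean picture degrades: the endpoints $P_i, Q_i$ themselves, the fixed points of the side-pairing elements $h(s_i)$ (parabolic cusps of $R$, where $\rho_x$ runs into a vertex rather than cleanly crossing a wall), and their $\Gamma$-translates. Modulo $\Gamma$ there are only finitely many such orbits — one for each vertex/edge of the polygon $R$ — so after removing them the orbit-equivalence holds on the nose. The main obstacle, and the step requiring the most care, is precisely establishing that the piecewise definition via the arcs $[P_i, P_{i+1})$ matches the ``cross one wall of the tessellation'' dynamics: this is where the even corners property is essential (it guarantees the arcs $C(s_i)$ do not cross tiles transversally, so the combinatorics of the walk are well-defined), and it is the geometric heart of Bowen and Series's construction. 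Everything after that — that iterated walks realize the full $\Gamma$-orbit, and the identification of the exceptional set — is comparatively routine bookkeeping about the polygon $R$ and its side-pairings.
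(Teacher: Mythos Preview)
The paper itself does not supply a proof of this proposition; it simply records the statement and cites \cite[Lemma~2.4]{bowen-series}. So there is no ``paper's own proof'' to compare against beyond the original Bowen--Series argument, which you are in effect trying to sketch.

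Your outline of the forward inclusion $\mathrm{GO}_A(x)\subset\Gamma\cdot x$ is fine: the pieces of $A_{\Gamma,\mathrm{BS}}$ are elements of $\Gamma$, so this is immediate, and the tessellation-walk picture you describe is indeed the geometric content of the Bowen--Series construction. The identification of the exceptional set with the $\Gamma$-orbits of the vertices $P_i,Q_i$ is also correct in spirit.

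The converse inclusion, however, is not established by what you wrote. You argue that ``every $g\in\Gamma$ arises as a prefix $g_n\cdots g_1$ for suitable $x$'' and that ``the coding is onto,'' and then conclude $\Gamma\cdot x\subset\mathrm{GO}_A(x)$ ``after possibly comparing two forward orbits via a common image.'' But the first claim is about \emph{some} $x$, not the given one, and the second phrase is exactly the statement to be proved rather than an argument for it. What actually needs to be shown is: for each generator $h(s_i)$ and (generic) $x\in\bS^1$, the forward $A$-orbits of $x$ and $h(s_i)\cdot x$ eventually merge. This is the substance of \cite[Lemma~2.4]{bowen-series}, and it requires a case analysis of where $x$ sits relative to the arcs $[P_j,P_{j+1})$ and how the side-pairings interact under the even corners hypothesis; it does not follow from surjectivity of the coding alone. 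Since you already plan to quote that lemma for the bookkeeping, your sketch is acceptable as an expository gloss, but you should be aware that the sentence beginning ``Conversely'' does not yet contain a proof.
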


We shall simply denote $A_{\G, \textrm{BS}}$  by $A_{\G}$. 
The Bowen-Series maps corresponding to Fuchsian groups uniformizing positive genus surfaces (possibly with punctures) are discontinuous. Let us illustrate this with two examples.
In the left diagram in Figure~\ref{closed_bs_fig}, $R$ is a fundamental domain for a (closed) genus two surface where the color coding determines the side-pairings. Note that $h(s_1)(x)=y$, and $h(s_2)(P_2)=Q_5$. Thus, for continuity of the corresponding Bowen-Series map at $P_2$, the map $h(s_1)$ must send the geodesic ray from $x$ to $P_2$ to the geodesic ray from $y$ to $Q_5$. But the former ray lies in the net $\mathcal{N}$ (by the even corners property), while the latter ray passes through $\Int{R}$. This is absurd as $R$ is a fundamental domain, proving discontinuity of the Bowen-Series map at $P_2$.
In the right diagram in the above figure, $R$ is a fundamental domain for a once punctured torus where the sides are paired according to their colors. The side-pairing transformations $h(s_1)$ maps $P_2$ to $P_3$, while $h(s_2)$ carries $P_2$ to $P_1$. This causes discontinuity of the associated Bowen-Series map at $P_2$. 

Thus, to get continuous Bowen-Series maps, we need to restrict our attention to punctured sphere groups (possibly with orbifold points) equipped with special fundamental domains. In fact, it turns out that the Bowen-Series maps of Fuchsian punctured sphere groups constructed below are coverings of $\mathbb{S}^1$ with degree at least two.

\begin{figure}[h!]
	\captionsetup{width=0.96\linewidth}

	\begin{tikzpicture}
		\node[anchor=south west,inner sep=0] at (0,0) {\includegraphics[width=0.46\linewidth]{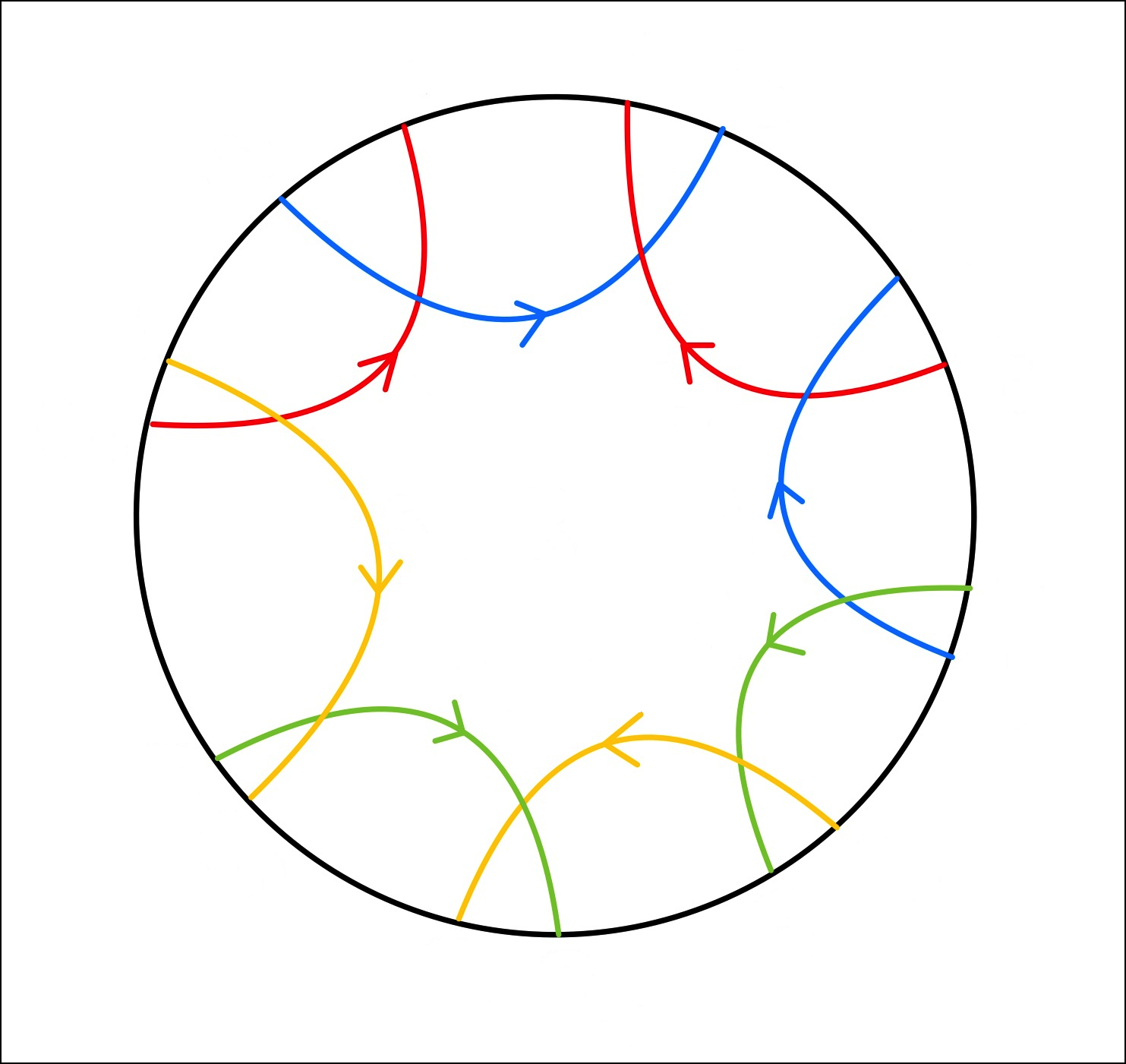}};
		\node[anchor=south west,inner sep=0] at (6,0) {\includegraphics[width=0.5\linewidth]{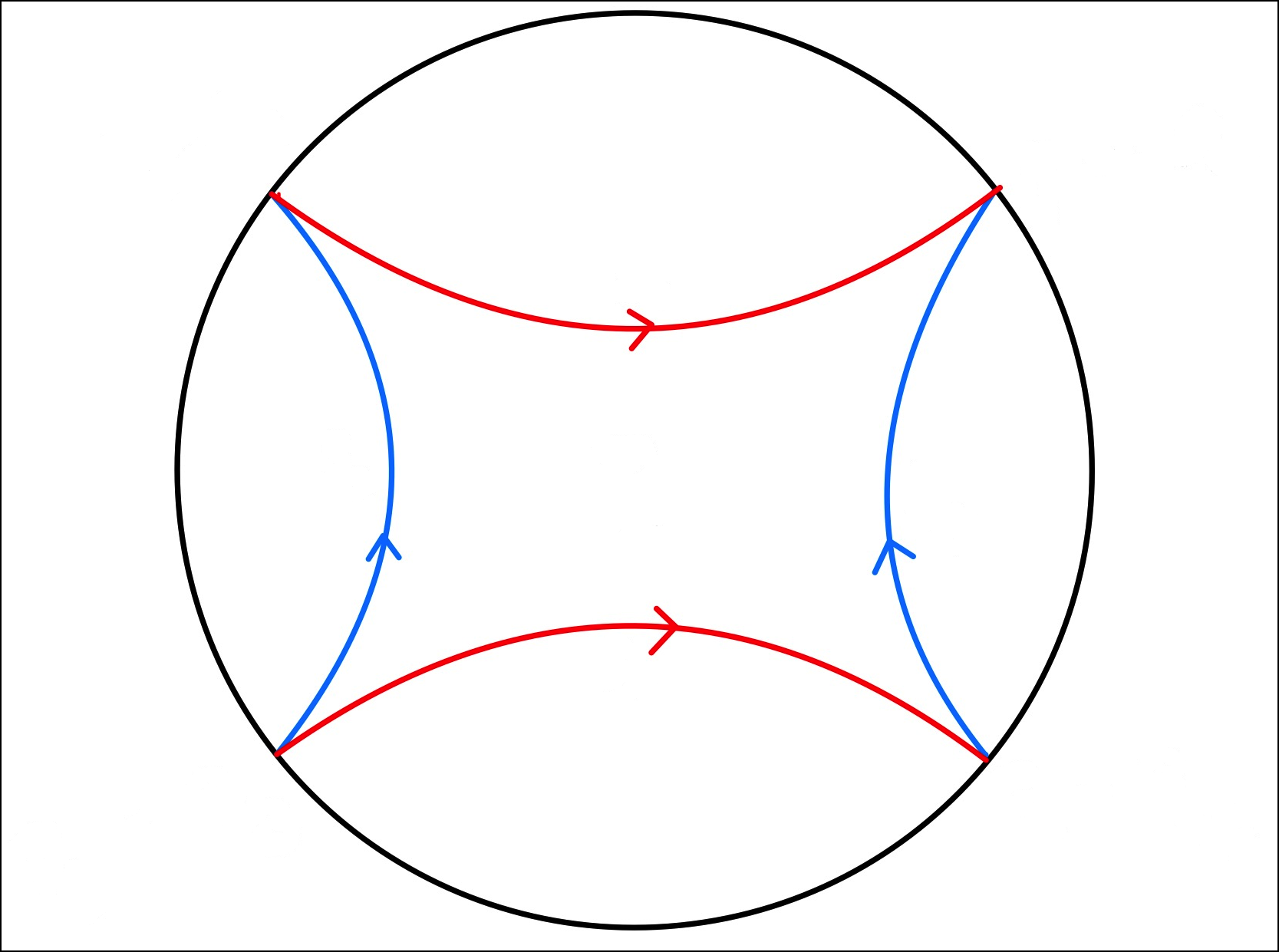}};
		\node at (1.9,5.09) {\begin{small}$P_1$\end{small}};
		\node at (1.4,4.8) {\begin{small}$Q_1$\end{small}};
		\node at (0.6,3.88) {\begin{small}$P_2$\end{small}};
		\node at (0.48,3.22) {\begin{small}$Q_2$\end{small}};
		\node at (0.86,1.54) {\begin{small}$P_3$\end{small}};
		\node at (1.16,1.16) {\begin{small}$Q_3$\end{small}};
		\node at (2.2,0.6) {\begin{small}$P_4$\end{small}};
		\node at (2.9,0.44) {\begin{small}$Q_4$\end{small}};
		\node at (4.08,0.7) {\begin{small}$P_5$\end{small}};
		\node at (4.48,1.04) {\begin{small}$Q_5$\end{small}};
		\node at (5.16,1.9) {\begin{small}$P_6$\end{small}};
		\node at (5.32,2.4) {\begin{small}$Q_6$\end{small}};
		\node at (5.25,3.6) {\begin{small}$P_7$\end{small}};
		\node at (4.9,4.2) {\begin{small}$Q_7$\end{small}};
		\node at (3.88,5) {\begin{small}$P_8$\end{small}};
		\node at (3.3,5.18) {\begin{small}$Q_8$\end{small}};
		\node at (1.4,3.1) {$x$};
		\node at (4.35,3.28) {$y$};
		\node at (2.8,2.8) {$R$};
		\node at (2.2,3.5) {\begin{small}$s_1$\end{small}};
		\node at (2.15,2.8) {\begin{small}$s_2$\end{small}};
		\node at (2.3,2.04) {\begin{small}$s_3$\end{small}};
		\node at (3,1.88) {\begin{small}$s_4$\end{small}};
		\node at (3.78,2.3) {\begin{small}$s_5$\end{small}};
		\node at (3.78,2.8) {\begin{small}$s_6$\end{small}};
		\node at (3.38,3.5) {\begin{small}$s_7$\end{small}};
		\node at (2.8,3.6) {\begin{small}$s_8$\end{small}};
		\node at (6.7,4) {\begin{small}$P_2=Q_2$\end{small}};
		\node at (6.7,0.8) {\begin{small}$P_3=Q_3$\end{small}};
		\node at (11.5,0.8) {\begin{small}$P_4=Q_4$\end{small}};
		\node at (11.6,3.8) {\begin{small}$P_1=Q_1$\end{small}};
		\node at (9.2,3.36) {\begin{small}$s_1$\end{small}};
		\node at (7.7,2.4) {\begin{small}$s_2$\end{small}};
		\node at (9.2,1.32) {\begin{small}$s_3$\end{small}};
		\node at (10.75,2.32) {\begin{small}$s_4$\end{small}};
		\node at (9.2,2.4) {$R$};
	\end{tikzpicture}
	\caption{Bowen-Series Maps for surfaces of higher genus}
	\label{closed_bs_fig}
\end{figure}

\subsection{Bowen-Series maps for punctured spheres}\label{b_s_punc_sphere_subsec}

We mention at the outset that we always associate Bowen-Series maps with Fuchsian groups decorated with preferred fundamental domains and side-pairing transformations.

We  first construct a specific Fuchsian group $G_d$ uniformizing a $(d+1)-$times punctured sphere equipped with a preferred fundamental domain.  The group $G_d$ (equipped with the preferred fundamental domain) will serve as a base-point in the Teichm{\"u}ller space of $(d+1)-$times punctured spheres. Since any (marked) group $\Gamma\in\textrm{Teich}(G_d)$ is conjugate to $G_d$ via a quasiconformal homeomorphism of $\widehat{\C}$, the Bowen-Series map of $\Gamma$ equipped with a marked fundamental domain determined by the quasiconformal conjugacy is easily seen to be a quasiconformal conjugate of the Bowen-Series map of $G_d$. 

Fix $d\geq 2$. For $j\in\{1,\cdots, d\}$, let $C_j$ be the hyperbolic geodesic of $\disk$ connecting $p_j:=e^{\pi i (j-1)/d}$ and $p_{j+1}:=e^{\pi i j/d}$, and $C_{-j}$ be the image of $C_j$ under reflection in the real axis. We further denote the complex conjugate of $p_j$ by $p_{-j}$, $j\in\{2,\cdots, d\}$. Choose a M{\"o}bius automorphism $g_j$ of $\disk$ defined as reflection in $C_j$ followed by complex conjugation. By construction, $g_j$ carries $C_j$ onto $C_{-j}$  (cf.\ Figure~\ref{fund_dom_punctured_sphere_fig}). Note that for $j\in\{1,\cdots,d-1\}$, the M{\"o}bius map $g_{j+1} g_j^{-1}$ is the composition of reflections in the circular arcs $C_{j+1}$ and $C_{j}$. Since $C_j$ and $C_{j+1}$ touch at $p_{j+1}$, a straightforward computation (using the formula of circular reflections) shows that $g_{j+1} g_j^{-1}$ fixes $p_{j+1}$ and has derivative equal to one at this fixed point. Therefore, $g_{j+1} g_j^{-1}$ is parabolic with its unique fixed point at $p_{j+1}$. Likewise, the maps $g_1, g_d$ fix $p_1, p_{d+1}$ (respectively), and have derivative equal to one there. Thus, $g_1, g_d$ are also parabolic with their unique fixed points at $p_1, p_{d+1}$, respectively. Let
$$
G_d:=\langle g_1,\cdots, g_d\rangle.
$$
We note that $G_d$ is a Fuchsian group with fundamental domain $R$ having $C_1,\cdots, C_d,$ $C_{-d}, \cdots, C_{-1}$ as its edges. Moreover, $\disk/G_d$ is a $(d+1)$-times punctured sphere.

\begin{figure}[h!]
\captionsetup{width=0.96\linewidth}
	\begin{tikzpicture}
		\node[anchor=south west,inner sep=0] at (1,0) {\includegraphics[width=0.96\linewidth]{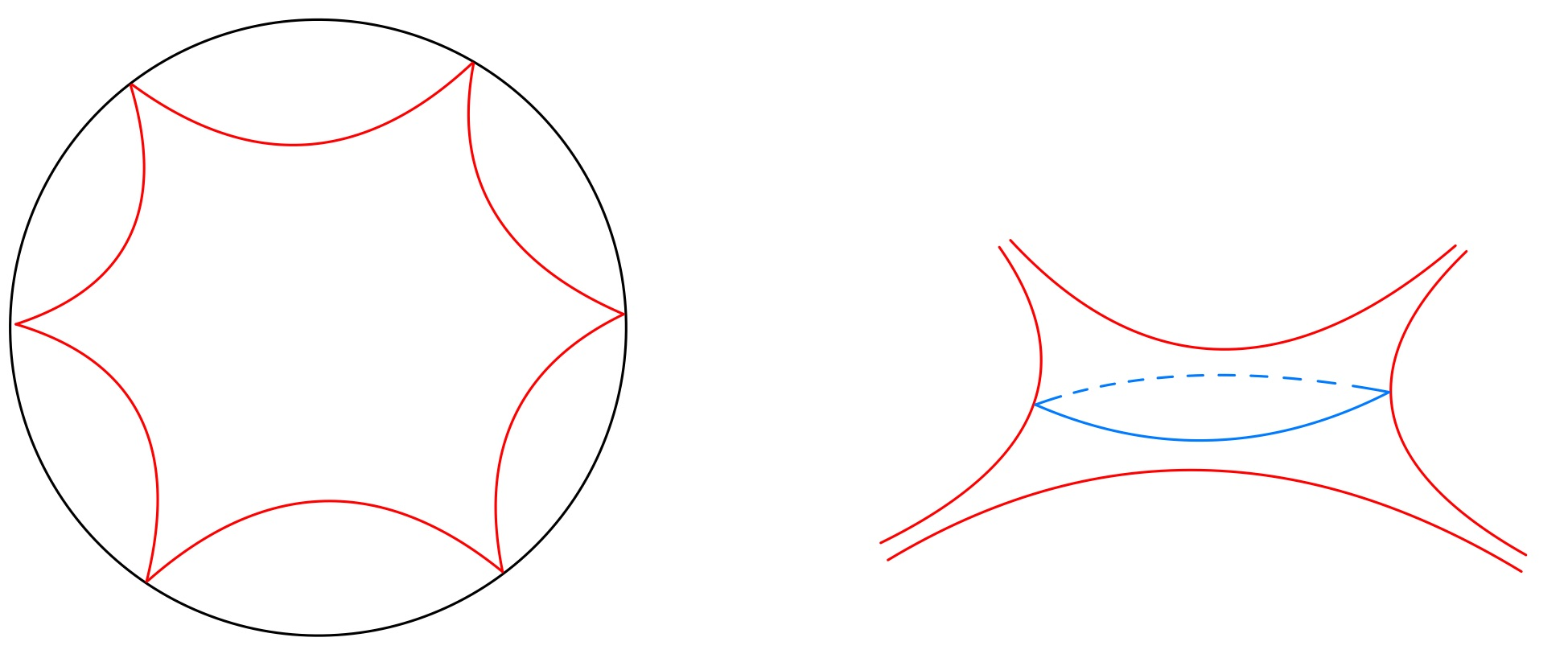}};
		\node at (3.36,2.5) {\begin{large}$R$\end{large}};
		\node at (4.6,3) {$C_1$};
		\node at (4.6,2) {$C_{-1}$};
		\node at (3.6,3.6) {$C_2$};
		\node at (3.6,1.5) {$C_{-2}$};
		\node at (2.4,3) {$C_3$};
		\node at (2.5,2) {$C_{-3}$};
		\node at (5.9,3.7) {$g_1$};
		\node at (6.1,1.64) {$g_{1}^{-1}$};
		\node at (3.5,5.2) {$g_2$};
		\node at (3.5,-0.1) {$g_{2}^{-1}$};
		\node at (1,3.7) {$g_3$};
		\node at (1,1.32) {$g_{3}^{-1}$};
		\node at (6.2,2.58) {$p_1$};
		\node at (5.04,4.84) {$p_{2}$};
		\node at (1.96,4.84) {$p_{3}$};
		\node at (0.7,2.58) {$p_{4}$};
		\node at (5.12,0.32) {$p_{-2}$};
		\node at (2.1,0.3) {$p_{-3}$};
		\node at (8.4,3.6) {\begin{small}$\left[p_3\right]=\left[p_{-3}\right]$\end{small}};
		\node at (12,3.5) {\begin{small}$\left[p_2\right]=\left[p_{-2}\right]$\end{small}};
		\node at (8,0.3) {\begin{small}$\left[p_4\right]$\end{small}};
		\node at (12.7,0.3) {\begin{small}$\left[p_1\right]$\end{small}};
	\end{tikzpicture}
	\caption{The preferred fundamental domain $R$ of $G_3$, which uniformizes a four times punctured sphere, is shown. The fundamental domain has all six vertices on $\mathbb{S}^1$, and they cut the circle into six arcs. The corresponding Bowen-Series map acts on these arcs by the generators $g_j^{\pm 1}$ displayed next to them.}
	\label{fund_dom_punctured_sphere_fig}
\end{figure}

We refer the reader to Figure \ref{fund_dom_punctured_sphere_fig}.
For $j\in\{1,\cdots, d\}$, let $I_j$ denote the counterclockwise sub-arc of $\mathbb{S}^1$ connecting $p_j$ to $p_{j+1}$. Let $I_{-j}$ denote the image of $I_j$ under reflection in the real axis. Note that the Bowen-Series map $A_{G_d}$ of $G_d$ (equipped with the fundamental domain $R$) acts on $I_{\pm j}$ by $g_j^{\pm 1}$. The following two properties hold.

\begin{prop}[\bf{Properties of Bowen-Series maps of punctured spheres}]\cite[Proposition~3.3]{mj-muk-1}\label{b_s_poly_conjugate_prop_1}
\noindent\begin{enumerate}\upshape
\item For $d\geq 2$, the Bowen-Series map $A_{G_d}$ of $G_d$ (equipped with the fundamental domain $R$) is a $C^1$ expansive degree $2d-1$ covering of $\mathbb{S}^1$, and hence is topologically conjugate to $z^{2d-1}\vert_{\mathbb{S}^1}$. Moreover, $A_{G_d}$ is a \pwfm map.
	
\item $A_{G_d}$ is orbit equivalent to $G_d$ on $\bS^1$.
\end{enumerate}
\end{prop}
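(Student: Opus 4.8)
The plan is to verify the two claims by direct computation with the explicit group $G_d$ and its preferred fundamental domain $R$, exploiting the highly symmetric configuration of the geodesics $C_{\pm j}$ and the parabolic structure at the ideal vertices $p_j$.

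\textbf{Step 1: Degree and covering.} First I would count the pieces: the $2d$ arcs $I_{\pm j}$ partition $\bS^1$, and on $I_{\pm j}$ the map acts by $g_j^{\pm 1}$. Each $g_j^{\pm 1}$ is a M\"obius map of $\disk$, hence maps the arc $I_{\pm j}$ homeomorphically onto an arc of $\bS^1$; the even-corners property (which holds here because each $C_{\pm j}$ lies in the net $\NN$, the $C_j, C_{j+1}$ touching exactly at the puncture $p_{j+1}$) guarantees that the images tile $\bS^1$ in a way that makes $A_{G_d}$ a covering. To compute the degree, I would track the images $g_j(I_j) = $ (arc between $g_j(p_j)$ and $g_j(p_{j+1})$); since $g_j$ is reflection in $C_j$ followed by conjugation and carries $C_j$ onto $C_{-j}$, one finds $g_j(I_j)$ wraps over a definite collection of the $I_{\pm l}$, and summing the local degrees over the $2d$ pieces yields $2d-1$ (the ``$-1$'' reflecting the parabolic identifications at $p_1$ and $p_{d+1}$ where consecutive pieces share a branch). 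Expansiveness follows because the derivative of each $g_j$ at an interior point of its arc is strictly greater than one away from the parabolic vertices, and the Markov/covering structure then propagates expansion; $C^1$-smoothness is exactly the continuity statement $g_j(p_{j+1}) = g_{j+1}(p_{j+1})$ together with matching of one-sided derivatives, which holds because $g_{j+1}g_j^{-1}$ is parabolic with derivative $1$ at $p_{j+1}$ (this is the computation already recorded in the text). Once $A_{G_d}$ is a $C^1$ expansive covering of degree $2d-1$, topological conjugacy to $z^{2d-1}|_{\bS^1}$ is the standard expanding-map classification.

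\textbf{Step 2: Markov property.} To see that $A_{G_d}$ is \pwfm, I would check that $\{I_{\pm j}\}$ is a Markov partition in the sense of Definition \ref{markov_def}: the first three conditions are immediate from the construction, $A_{G_d}|_{I_{\pm j}} = g_j^{\pm 1}$ is injective, and the key fifth condition — if $g_j^{\pm1}(\Int I_{\pm j})$ meets $\Int I_{\pm l}$ then it contains $I_{\pm l}$ — follows because the images of the break-points $p_m$ under the $g_j^{\pm1}$ are again among the break-points (the orbit of $\{p_m\}$ under the generators stays in $\{p_m\}$, as each $g_j$ permutes the ideal vertices of the net edges emanating from $R$). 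Since the $g_j$ generate $\Gamma_A = G_d$, this shows $A_{G_d}$ is piecewise Fuchsian Markov. Alternatively, one may invoke that continuous, open, distance-expanding maps of compact metric spaces admit Markov partitions and then check that the natural one is the partition into the $I_{\pm j}$.

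\textbf{Step 3: Orbit equivalence.} For part (2), Proposition \ref{orbit_equiv_prop} already gives orbit equivalence of $A_{G_d}$ to $G_d$ \emph{except possibly at finitely many points modulo $\Gamma$}; the task is to remove the exceptional points. These exceptional points are precisely the parabolic fixed points $p_m$ (the vertices of $R$). At such a point I would argue directly: $p_{j+1}$ is the common fixed point of the parabolic $a_j = g_{j+1}g_j^{-1}$, and the grand orbit $\mathrm{GO}_{A_{G_d}}(p_{j+1})$ under the Bowen-Series map already contains the full $\langle a_j\rangle$-orbit issues, and chasing the piecewise definition through the break-points shows that the forward $A_{G_d}$-orbit of $p_{j+1}$ lands on the $G_d$-orbit of the parabolic fixed points, so $G_d\cdot p_{j+1} = \mathrm{GO}_{A_{G_d}}(p_{j+1})$ after all. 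Combining with Proposition \ref{orbit_equiv_prop} gives orbit equivalence everywhere on $\bS^1$, which equals $\Lambda(G_d)$ since $G_d$ is of the first kind.

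\textbf{Main obstacle.} I expect the degree count in Step 1 and the removal of exceptional points in Step 3 to be the delicate parts: both require carefully tracking how the generators $g_j^{\pm1}$ move the break-points $p_m$ around $\bS^1$, and in particular understanding the combinatorics near the parabolic vertices $p_1, p_{d+1}$ where the ``missing'' degree and the exceptional orbit-equivalence points both live. A clean way to organize this is to use the even-corners/net picture: the images of $\partial R$ under $G_d$ cut $\disk$ into copies of $R$, and the Bowen-Series map is the ``first-return-type'' map reading off which net-edge separates a boundary point from $R$; the degree and orbit-equivalence statements then become bookkeeping about this net, which is tractable because of the explicit symmetry of the configuration.
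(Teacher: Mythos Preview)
The paper does not give its own proof here; the proposition is simply cited from \cite[Proposition~3.3]{mj-muk-1}. Your outline has the right ingredients, but two steps need tightening.

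In Step~2, your stated reason for the Markov property is false: it is \emph{not} true that ``the orbit of $\{p_m\}$ under the generators stays in $\{p_m\}$'' --- for instance $g_1(p_3)$ is in general not a $(2d)$-th root of unity. What \emph{is} true, and is all Definition~\ref{markov_def}(5) requires, is that each piece $g_j^{\pm 1}$ sends the two endpoints of \emph{its own} arc $I_{\pm j}$ to break-points: since $g_j$ is reflection in $C_j$ followed by conjugation, one has $g_j(p_j)=p_{-j}$ and $g_j(p_{j+1})=p_{-(j+1)}$. So the conclusion survives but the justification must be replaced.

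Step~3 is too vague to count as an argument. The sentence about the grand orbit ``already contain[ing] the full $\langle a_j\rangle$-orbit issues'' does not establish $G_d\cdot x=\mathrm{GO}_{A_{G_d}}(x)$ at the parabolic vertices, and it is not clear these are even the exceptional points of Proposition~\ref{orbit_equiv_prop}: here the fundamental domain is an ideal polygon, so in the notation of Definition~\ref{b_s_map_def} one has $P_i=Q_i$, and the endpoint ambiguity responsible for the exceptional set in the general Bowen--Series argument collapses. A cleaner route, and the one the paper itself uses for the parallel Proposition~\ref{prop-cfdhd-orbeq}, is to bypass Proposition~\ref{orbit_equiv_prop} and argue directly: for each generator $g_j$ and each $x\in\bS^1$, a short case analysis on which arc $I_{\pm l}$ contains $x$ (and which contains $g_j(x)$) shows that $x$ and $g_j(x)$ lie in the same $A_{G_d}$-grand orbit. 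This yields orbit equivalence on all of $\bS^1$ with no exceptional set to chase.
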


We refer the reader to \cite[Propositions~3.4, 3.5]{mj-muk-1} for an orbifold variant of Proposition \ref{b_s_poly_conjugate_prop_1}.

\begin{rmk}
In  the above examples, the chosen fundamental domains of the groups coincide with those of the corresponding Bowen-Series maps.
\end{rmk}

\subsection{Mateability of Bowen-Series maps}\label{sec-puncturedspherebuildingblock}
We note now that Bowen-Series maps for punctured spheres fit into our mating framework. 
Recall that $\mathcal{H}_k$ stands for the principal hyperbolic component in the space of degree $k$ polynomials.

\begin{theorem}[\bf{Fuchsian punctured sphere Bowen-Series maps are mateable}]\cite[Theorem~3.7]{mj-muk-1}\label{moduli_interior_mating_thm}
	Let $\Gamma\in\mathrm{Teich}(G_d)$, and $P\in\mathcal{H}_{2d-1}$. Then, the map $\widehat{A}_{\Gamma}:\mathcal{D}_{A_{\Gamma}}\to\overline{\disk}$ and $P:\mathcal{K}(P)\to\mathcal{K}(P)$ are conformally mateable.
\end{theorem}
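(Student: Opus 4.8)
The plan is to reduce the statement to Proposition \ref{conformal_mating_general_prop}, which asserts that any mateable map (in the sense of Definition \ref{def-mateable2}) of degree $d'$ is conformally mateable with every polynomial in $\mathcal{H}_{d'}$. So the entire task is to verify that for every $\Gamma\in\mathrm{Teich}(G_d)$, the Bowen-Series map $A_\Gamma$ equipped with its marked fundamental domain is a mateable map of degree $2d-1$; the polynomial side requires nothing beyond the hypothesis $P\in\mathcal{H}_{2d-1}$. First I would dispose of the base point $\Gamma=G_d$: by Proposition \ref{b_s_poly_conjugate_prop_1}(1), $A_{G_d}$ is a \pwfm map of degree $2d-1$, and by part (2) it is orbit equivalent to $\Gamma_{A_{G_d}}=G_d$ on $\bS^1$. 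Thus to invoke Definition \ref{def-mateable2} for $A_{G_d}$ it only remains to check that no periodic break-point is asymmetrically hyperbolic. Here the break-points are exactly the parabolic fixed points $p_1,\dots,p_{d+1}$ (the cusps), as computed in the construction of $G_d$: each $p_{j+1}$ is the parabolic fixed point of $g_{j+1}g_j^{-1}$ (and $p_1,p_{d+1}$ of $g_1,g_d$), with derivative $1$. Since these points are parabolic, they are a fortiori not hyperbolic on either side, hence vacuously not asymmetrically hyperbolic; this settles Item~\ref{asym_hyp}. Therefore $A_{G_d}$ is mateable, and Proposition \ref{conformal_mating_general_prop} gives the conclusion for $\Gamma=G_d$.

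For general $\Gamma\in\mathrm{Teich}(G_d)$ I would transport everything by the quasiconformal conjugacy. As recalled in Subsection \ref{b_s_punc_sphere_subsec}, any marked $\Gamma\in\mathrm{Teich}(G_d)$ is conjugate to $G_d$ by a quasiconformal homeomorphism $\Phi$ of $\widehat{\C}$, and the marked fundamental domain of $\Gamma$ is $\Phi(R)$; consequently $A_\Gamma=\Phi\circ A_{G_d}\circ\Phi^{-1}$ on $\bS^1$ (after noting $\Phi$ restricts to a circle homeomorphism, up to the standard normalization). The properties in Definition \ref{def-mateable2} are all invariant under this topological conjugacy: $A_\Gamma$ is again piecewise Fuchsian (its pieces are $\Phi g_j\Phi^{-1}$, generating $\Gamma$), it is an expansive covering of the same degree $2d-1$ because expansivity and degree are topological, and the pieces $\Phi(I_j)$ still form a Markov partition since the Markov conditions in Definition \ref{markov_def} are preserved by a homeomorphism. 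Orbit equivalence of $A_\Gamma$ to $\Gamma_{A_\Gamma}=\Gamma$ follows by applying $\Phi$ to the relation $\Gamma_{A_{G_d}}\cdot x=\mathrm{GO}_{A_{G_d}}(x)$. Finally, the break-points of $A_\Gamma$ are the $\Phi(p_j)$, which are parabolic fixed points of $\Gamma$ (conjugation preserves parabolicity), hence again not asymmetrically hyperbolic. Thus $A_\Gamma$ is mateable of degree $2d-1$, and a second application of Proposition \ref{conformal_mating_general_prop} with $P\in\mathcal{H}_{2d-1}$ yields conformal mateability of $\widehat{A}_\Gamma:\mathcal{D}_{A_\Gamma}\to\overline{\disk}$ with $P:\mathcal{K}(P)\to\mathcal{K}(P)$.

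The step I expect to require the most care is the verification that the quasiconformal conjugacy $\Phi$ genuinely intertwines the Bowen-Series maps in a way that carries the marked fundamental domain of $G_d$ to that of $\Gamma$, i.e. that the Bowen-Series map is natural with respect to the Teichm\"uller deformation; this is the content of the remarks preceding Proposition \ref{b_s_poly_conjugate_prop_1} and must be used with some attention to the choice of markings and side-pairings. Everything else — invariance of expansivity, degree, the Markov property, orbit equivalence, and the (vacuous) asymmetric-hyperbolicity condition at cusps — is then a routine transport of structure. Note in particular that Item~\ref{asym_hyp} is free here precisely because punctured-sphere Bowen-Series maps have only parabolic periodic break-points, which is exactly the feature that makes this family amenable to the David-homeomorphism machinery underlying Proposition \ref{conformal_mating_general_prop}.
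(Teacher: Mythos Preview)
Your strategy is correct and is exactly the route the paper has in mind: verify that $A_\Gamma$ satisfies Definition~\ref{def-mateable2} (using Proposition~\ref{b_s_poly_conjugate_prop_1} for the \pwfm and orbit-equivalence conditions, plus a break-point check) and then invoke Proposition~\ref{conformal_mating_general_prop}. Two small corrections, however.

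First, the break-points of $A_{G_d}$ are the $2d$ ideal vertices $p_1,\dots,p_{d+1},p_{-2},\dots,p_{-d}$ of $R$, not only $p_1,\dots,p_{d+1}$; the pairs $\{p_{j+1},p_{-(j+1)}\}$ for $1\le j\le d-1$ are $2$-cycles under $A_{G_d}$, while $p_1,p_{d+1}$ are fixed.

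Second, the sentence ``$p_{j+1}$ is the parabolic fixed point of $g_{j+1}g_j^{-1}$, hence not asymmetrically hyperbolic'' elides a step: what Item~\eqref{asym_hyp} requires is that the two one-sided return maps $A_\Gamma^{\,n}$ at a periodic break-point $q$ have equal multiplier. The clean argument is that every break-point is a cusp of $\Gamma$, so its stabilizer in $\Gamma$ consists entirely of parabolics; since each one-sided return map is an element of $\Gamma$ fixing $q$, it lies in this stabilizer and therefore has derivative $1$. Thus all periodic break-points are symmetrically parabolic. (For the base group $G_d$ you can alternatively just quote the $C^1$ statement in Proposition~\ref{b_s_poly_conjugate_prop_1}(1), which already forces the two one-sided multipliers to agree.) With these adjustments your argument is complete.
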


\section{Folding and higher Bowen-Series maps for Fuchsian groups}\label{sec-fold}
The aim of this section is to describe a new class of \pwfm maps (following \cite{mj-muk-1}), beyond the Bowen-Series examples that are mateable with polynomials. We start with classes of maps that we shall be considering in this section.
Recall that the fundamental domain of a \pwfm map $A$ is denoted by $R$. The set $\mathcal{D}=\overline{\disk}\setminus R$ is the canonical domain of definition of $\widehat{A}$ in $\overline{\disk}$, and a bi-infinite geodesic in $R$ joining a pair of non-adjacent vertices of $R$ is called a \emph{diagonal} of $R$. 

It is instructive to go through the following two definitions in conjunction with the two explicit examples of \pwfm maps given in Subsection~\ref{sec-cfm} (cf. Figure~\ref{cfd}).

\begin{defn}[\bf{Completely folding map}]\label{def-cfm}
	A \pwfm map $A: \bS^1 \to \bS^1$ is said to 
	be a \emph{\cfm} if there exist  finitely many diagonals $\delta_1, \cdots, \delta_l$ of $R$ such that the following hold:
	\begin{enumerate}
		\item For every edge $\alpha$ of $R$, $\widehat{A}(\alpha)$ is one of the  diagonals $\delta_1, \cdots, \delta_l$.
		\item The ideal endpoints $p_i, q_i$ of $\delta_i$ are fixed points of $A$ for all $i$; i.e., $A(p_i) = p_i$ and $A(q_i) = q_i$ whenever $p_i, q_i$ 
		are ideal endpoints of $\delta_i$.
		\item For $p_i, q_i$ as above, $q_i = p_{i+1}$. 
		\item $\delta_i \cap \delta_j = \emptyset$ for $i \neq j$. Further, $p_1 \neq q_l$; i.e., the sequence of diagonals $\delta_i$ forms a chain of non-intersecting
		bi-infinite geodesics such that, after adjoining the ideal endpoints, one obtains a `piecewise geodesic' embedding of the closed interval $[0,1]$
		in the closed disk $\overline{\disk}$.
	\end{enumerate}
\end{defn}

\begin{defn}[\bf{Higher degree map without folding}]\cite[Definition 4.2]{mj-muk-1}\label{def-nofold}
	A \pwfm map $A: \bS^1 \to \bS^1$ is said to have a \emph{diagonal fold} if there
	exist consecutive edges $\alpha_1, \alpha_2$ of $\partial R$
	and a diagonal $\delta$ of $R$ such that $\widehat{A}(\alpha_i) = \delta$ for $i=1,2$. Note that if $a_1, a_2$ (resp. $ a_2, a_3$) are the endpoints of $\alpha_1$ (resp. $\alpha_2$) and $p, q$ are the endpoints of $\delta$, then
	$A(a_1)=p=A(a_3)$ and $A(a_2)=q$ by continuity of $A$ on $\bS^1$.
	
	A \pwfm map $A: \bS^1 \to \bS^1$ is said to be a \emph{\hdm} if
	\begin{enumerate}
		\item there exists an (open) ideal polygon $D \subset R$ such that all the edges 
		$\delta_1, \cdots, \delta_l$ of $D$ are  (necessarily non-intersecting) diagonals of $R$. We assume further that $\delta_1, \cdots, \delta_l$ are  cyclically ordered along $\partial D$. We shall call $D$ the \emph{inner domain} of $A$.
		\item If $p$ is an ideal vertex of $D$, then $A(p)=p$.
		\item For every edge $\alpha$ of $R$, $\widehat{A}(\alpha)$ is one of the diagonals
		$\delta_1, \cdots, \delta_l$.
		\item $A$ has no diagonal folds. 
	\end{enumerate}
\end{defn}

Cyclically ordering the edges $\alpha_1, \cdots, \alpha_k$ of $R$, it follows
from Definition \ref{def-nofold}, that under a \hdm $A$, consecutive edges
$\alpha_i, \alpha_{i+1}$ of $R$ go to consecutive edges of $D$. Note however that an counter-clockwise cyclic ordering of edges of $R$ may be taken to a 
clockwise cyclic ordering of edges of $D$ under $A$.
In any case we have a continuous map $\widehat{A}: \partial R \to \partial D$.
Adjoining the ideal endpoints of $R$ and $D$, $\widehat{A}$ has a well-defined
degree $d$. 
Further, each edge of $D$ has exactly $|d|$ pre-images under $\widehat{A}$ since there are no folds. Also, since each $\delta_i$ is a diagonal of $R$, we have 
$|d| >1$. We call $|d|$ the \emph{polygonal degree} of $A$. (Since $|d|>1$, we call $A$ a \hdm.)

\begin{rmk}
A \pwfm map with a diagonal fold need not be a \cfm; see Subsection~\ref{noe_2_subsubsec} for an example.
\end{rmk}

\subsection{A \cfm and a \hdm for the sphere with three punctures}\label{sec-cfm}

We now give two simple examples: a 
\cfm and a \hdm  which are orbit equivalent to $\Gamma_0$ corresponding to a sphere with three punctures. Then $\Gamma_0 $ is isomorphic to $F_2$, the fundamental group of
$S_{0,3}$ (see Figure \ref{cfd}). We will denote a bi-infinite hyperbolic geodesic in $\disk$ having its (ideal) endpoints at $a, b\in\mathbb{S}^1$ by $\overline{ab}$.

Fix a (closed) fundamental domain $W$ of $\Gamma_0$, given by an ideal quadrilateral with its ideal vertices at the fourth roots of unity (the quadrilateral $1236$ in the figure).
The generators of $\Gamma_0$ are given by $h, g$, where $h$ takes the edge
$\overline{12}$ to $\overline{16}$, $g$ takes $\overline{32}$ to $\overline{36}$, and $g^{-1}h$ is parabolic.  The combinatorics in this case is relatively simple and the case-by-case analysis for proving orbit equivalence in Proposition \ref{prop-cfdhd-orbeq} is easy. \\

\subsubsection{A completely folding map for $S_{0,3}$}\label{cfm_special_subsec} We shall first construct a \cfm,
and then modify the construction slightly to obtain a \hdm. We define the fundamental domain
$R$ of the \cfm $A_{\Gamma_0, \mathrm{cfm}}$ (to be constructed) as 
$$
R= \Int{\left(W \cup h.W \cup g.W\right)}.
$$
Thus, $R$ is the interior of the octagon $12345678$ in Figure \ref{cfd}. We define the pieces
of $A_{\Gamma_0, \mathrm{cfm}}$ as follows. In the list below, an arc will be indicated by $\arc{ij}$ where the pair of numbers $i, j$ are its endpoints, provided there are no other break-points of $A_{\Gamma_0, \mathrm{cfm}}$ in the arc. 
Otherwise, we will denote the arc by all the break-points it contains. Further 
the label of the arrow will denote the piece of $A_{\Gamma_0, \mathrm{cfm}}$ that takes the domain arc to the range arc.

\begin{figure}[ht!]
\captionsetup{width=0.96\linewidth}
	\begin{tikzpicture}
		\node[anchor=south west,inner sep=0] at (0,0) {\includegraphics[width=0.47\linewidth]{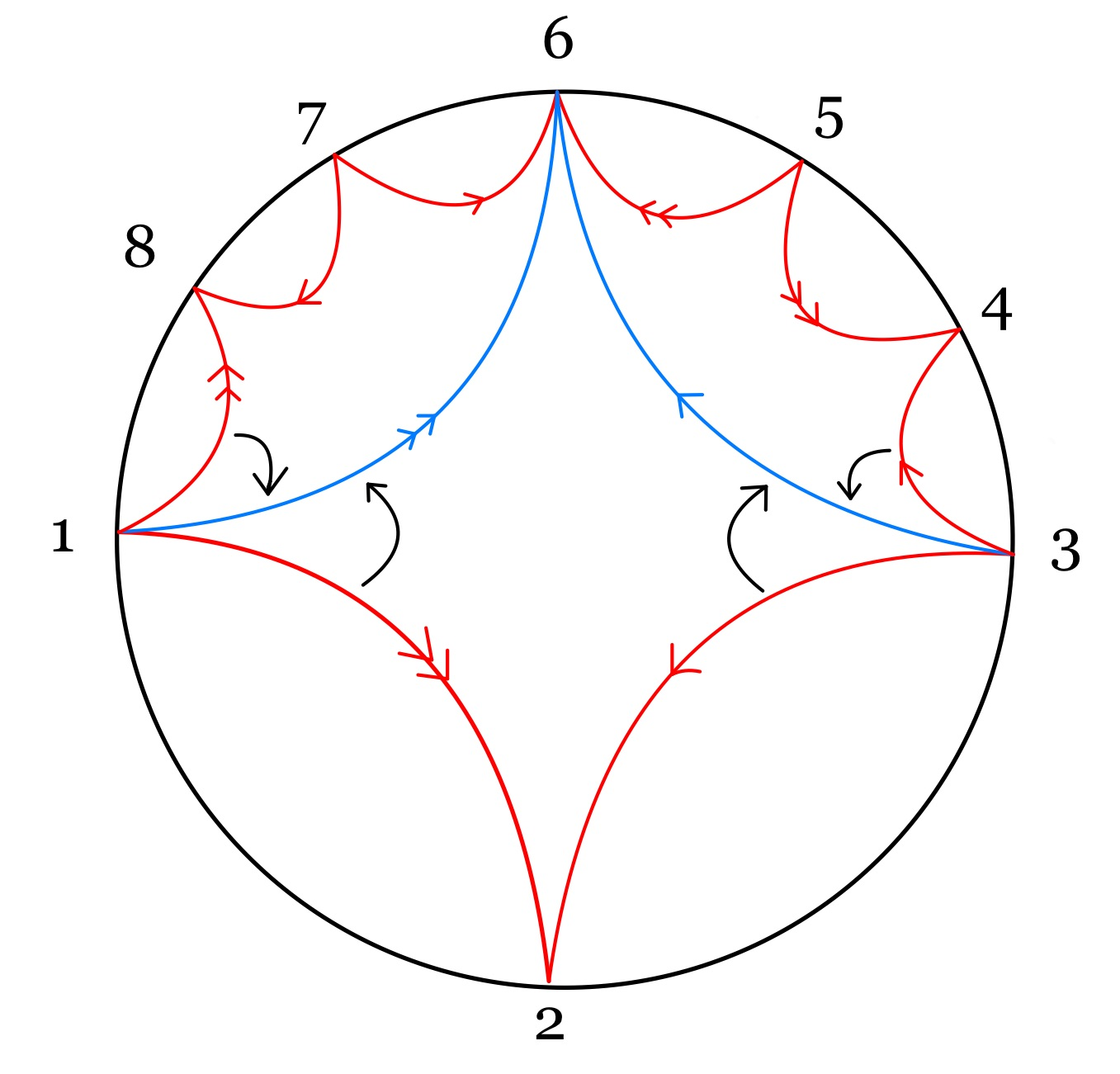}};
		\node[anchor=south west,inner sep=0] at (6,0) {\includegraphics[width=0.46\linewidth]{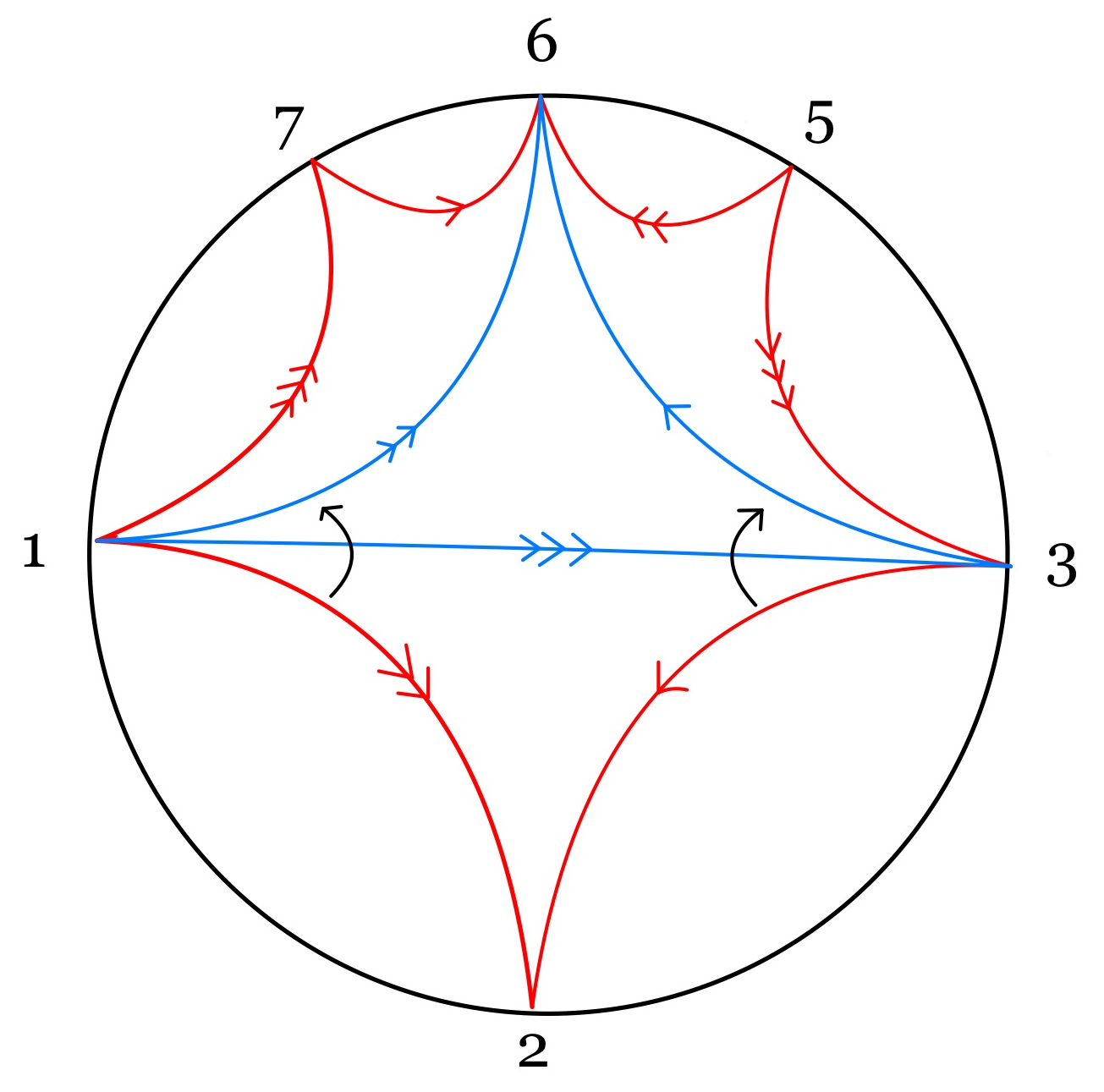}};
		\node at (2.3,2.9) {$h$};	
		\node at (3.7,2.84) {$g$};	
		\node at (1.75,3.48) {\begin{small}$h^{-1}$\end{small}};	
		\node at (4.5,3.4) {\begin{small}$g^{-1}$\end{small}};	
		\node at (3,2.4) {\begin{large}$W$\end{large}};	
		\node at (2.2,4.1) {\begin{small}$h(W)$\end{small}};	
		\node at (3.78,4.1) {\begin{small}$g(W)$\end{small}};	
		\node at (8,2.7) {\begin{small}$h$\end{small}};	
		\node at (9.7,2.64) {\begin{small}$g$\end{small}};	
		
	\end{tikzpicture}
	\caption{Fundamental domains for a completely folding map and a higher degree map without folding: $3$ punctures}
	\label{cfd}
\end{figure}

\begin{itemize}
	
	\item $ \arc{12} \stackrel{h}\longrightarrow \arc{123456}$
	\item $ \arc{23} \stackrel{g}\longrightarrow \arc{678123}$
	\item $ \arc{34} \stackrel{g^{-1}}\longrightarrow \arc{3456}$
	\item $ \arc{45} \stackrel{g^{-1}}\longrightarrow \arc{6781}$
	\item $ \arc{56} \stackrel{h\circ g^{-1}}\longrightarrow \arc{123456}$ (we use the convention that $\circ$  indicates composition of maps)
	\item $\arc{67} \stackrel{g\circ h^{-1}}\longrightarrow \arc{678123}$
	\item $\arc{78} \stackrel{h^{-1}}\longrightarrow \arc{3456}$
	\item $\arc{81} \stackrel{h^{-1}}\longrightarrow \arc{6781}$
\end{itemize}

\subsubsection{A higher degree map without folding for $S_{0,3}$}\label{hdm_special_subsec} There is a \hdm naturally associated with the \cfm above. Note that the \cfm $A_{\Gamma_0, \mathrm{cfm}}$ is not minimal. The pieces of $A_{\Gamma_0, \mathrm{cfm}}$ for the contiguous arcs $\arc{34}$ and $\arc{45}$ is 
$g^{-1}$. Similarly, the pieces of $A_{\Gamma_0, \mathrm{cfm}}$ for the contiguous arcs $\arc{78}$ and $\arc{81}$ is 
$h^{-1}$. We define $A_{\Gamma_0, \mathrm{hBS}}:\bS^1\to\bS^1$ to be the minimal \pwfm map agreeing with $A_{\Gamma_0, \mathrm{cfm}}$ everywhere (here `$\mathrm{hBS}$' is an acronym for `higher Bowen-Series', the reason behind this terminology will be explained in Remark~\ref{hbs_name_rem}). Although $A_{\Gamma_0, \mathrm{cfm}}$ and $A_{\Gamma_0, \mathrm{hBS}}$ agree pointwise, they are formally different \pwfm maps as $A_{\Gamma_0, \mathrm{cfm}}$ has more pieces (some of which are repeated). Consequently, their canonical extensions $\widehat{A}_{\Gamma_0, \mathrm{cfm}}$ and $\widehat{A}_{\Gamma_0, \mathrm{hBS}}$ have different domains of definition.

It is easy to see that $\widehat{A}_{\Gamma_0, \mathrm{hBS}}$ is a \hdm. The fundamental domain $R'$ for $\widehat{A}_{\Gamma_0, \mathrm{hBS}}$ is the interior of the ideal hexagon $123567$ contained in $R$. The inner domain of $\widehat{A}_{\Gamma_0, \mathrm{hBS}}$ is given by the ideal triangle $136$ (see Definition \ref{def-nofold}).
The pieces of $A_{\Gamma_0, \mathrm{hBS}}$ are given by the following list (note that $4, 8$ are not break-points of $A_{\Gamma_0, \mathrm{hBS}}$ and hence we omit them from the notation):
\begin{itemize}
	
	\item $\arc{12} \stackrel{h}\longrightarrow \arc{12356}$
	\item $\arc{23} \stackrel{g}\longrightarrow \arc{67123}$
	\item $\arc{35} \stackrel{g^{-1}}\longrightarrow \arc{35671}$ 
	\item $\arc{56} \stackrel{h\circ g^{-1}}\longrightarrow \arc{12356}$ 
	\item $\arc{67} \stackrel{g\circ h^{-1}}\longrightarrow \arc{67123}$
	\item $\arc{71} \stackrel{h^{-1}}\longrightarrow \arc{35671}$
\end{itemize}
The polygonal degree of $\widehat{A}_{\Gamma_0, \mathrm{hBS}}$ is $2$.

\subsubsection{Orbit equivalence}
\begin{prop}\label{prop-cfdhd-orbeq}
	Let $A_{\Gamma_0, \mathrm{cfm}},\ A_{\Gamma_0, \mathrm{hBS}}$ be as above. Then $A_{\Gamma_0, \mathrm{cfm}},\ A_{\Gamma_0, \mathrm{hBS}}$ are orbit equivalent to $\Gamma_0$.
\end{prop}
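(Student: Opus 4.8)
The plan is to prove the assertion for $A:=A_{\Gamma_0,\mathrm{cfm}}$ and then to read off the case of $A_{\Gamma_0,\mathrm{hBS}}$ for free, since the two maps agree pointwise on $\bS^1$ (they differ only as decorated piecewise maps) and hence have identical grand orbits. I would first record that $\Gamma_0$ is a lattice, its quotient $S_{0,3}$ having finite hyperbolic area, so the limit set of $\Gamma_0$ is all of $\bS^1$; thus the claim to be proved is exactly $\Gamma_0\cdot x=\mathrm{GO}_A(x)$ for every $x\in\bS^1$.

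The inclusion $\mathrm{GO}_A(x)\subseteq\Gamma_0\cdot x$ is routine and I would dispatch it first: each of the eight pieces of $A$ listed in Subsection~\ref{cfm_special_subsec} lies in $\Gamma_0=\langle h,g\rangle$ (each is one of $h^{\pm1}$, $g^{\pm1}$, $h\circ g^{-1}$, $g\circ h^{-1}$), so every iterate $A^m(x)$ is a composition of pieces applied to $x$ and therefore lies in $\Gamma_0\cdot x$; if $A^m(x)=A^n(x')$ then $\Gamma_0\cdot x=\Gamma_0\cdot x'$.

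For the reverse inclusion I would exploit that the relation $x\sim_A y$ (``$A^m(x)=A^n(y)$ for some $m,n\geq 0$'') is an equivalence relation and that $h,g$ generate $\Gamma_0$; hence it suffices to prove $\gamma x\sim_A x$ for every $x\in\bS^1$ and each $\gamma\in\{h,g\}$, the inverses, and then arbitrary words, following by symmetry and transitivity. This is where the finite case-check sits. First one reads off from the fundamental-domain picture (Figure~\ref{cfd}) how $h$ and $g$ act on the eight break-point arcs of $\bS^1$: $h$ fixes vertex $1$, carries $\arc{23}$ onto $\arc{67}$ and $\arc{34}$ into $\arc{78}$, and so on, while $g$ fixes vertex $3$ and behaves symmetrically. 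Feeding this into the explicit list of pieces produces, arc by arc, a short relation linking the $A$-orbits of $x$ and $\gamma x$. Three representative cases for $\gamma=h$: if $x\in\arc{12}$, then $A(x)=hx$, so $hx=A^0(hx)=A^1(x)$; if $x\in\arc{23}$, then $hx\in\arc{67}$, so $A(hx)=(g\circ h^{-1})(hx)=g(x)=A(x)$; if $x\in\arc{34}$, then $hx\in\arc{78}$, so $A(hx)=h^{-1}(hx)=x$, whence $A^2(hx)=A^1(x)$. Running through the remaining arcs, then repeating for $\gamma=g$, and appealing to continuity of $A$ to absorb the break-points themselves, completes the argument.

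The main difficulty, such as there is one, is purely organizational: in each case one must check that the two forward orbits actually \emph{meet}, not merely that both points remain in one $\Gamma_0$-orbit (which is automatic from the routine inclusion). This reduces to a short list of one-line identities in $\Gamma_0$ of the form $g_{j'}\circ\gamma=g_j$, each verified using that $g^{-1}h$ is parabolic together with the side-pairings $h(\overline{12})=\overline{16}$ and $g(\overline{32})=\overline{36}$. Since $\Gamma_0$ is free of rank two with an ideal quadrilateral fundamental domain, the number of cases is small and each is transparent --- this is the ``relatively simple'' combinatorics referred to in the text.
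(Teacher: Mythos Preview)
Your proposal is correct and follows essentially the same route as the paper's proof: both dispatch the inclusion $\mathrm{GO}_A(x)\subseteq\Gamma_0\cdot x$ trivially (pieces lie in $\Gamma_0$), then reduce the reverse inclusion to a finite case-check for a single generator, and finish by symmetry and the inverse trick. The only cosmetic differences are that the paper works with $A_{\Gamma_0,\mathrm{hBS}}$ (six pieces) rather than $A_{\Gamma_0,\mathrm{cfm}}$, organizes the three cases by the location of $y=gx$ rather than of $x$, and checks $g$ first rather than $h$; your remaining arcs for $h$ (namely $\arc{45}$ through $\arc{81}$) all fall under the single identity $A(hx)=h^{-1}(hx)=x$, so the unchecked cases are indeed routine.
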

\begin{proof}
	Since $A_{\Gamma_0, \mathrm{cfm}}$ and $A_{\Gamma_0, \mathrm{hBS}}$ agree as maps
	on $\bS^1$, it suffices to check this for $A_{\Gamma_0, \mathrm{hBS}}$. It is easy to see  that $A_{\Gamma_0, \mathrm{hBS}}-$grand orbits
	are contained in $\Gamma_0-$orbits simply because the pieces of  $A_{\Gamma_0, \mathrm{hBS}}$ are elements of $\Gamma_0$. It therefore suffices to show that if $x, y$ are in the same $\Gamma_0-$orbit then they lie in the same $A_{\Gamma_0, \mathrm{hBS}}-$grand orbit.
	It suffices to check this for the generators $g, h$ and their inverses.

	Let $y = g.x$. We want to show that $x, y$ lie in the same grand orbit under $A_{\Gamma_0, \mathrm{hBS}}$.\\
	Case 1: $y \in \arc{32176}$. Then $x \in \arc{32}$ and the piece of $A_{\Gamma_0, \mathrm{hBS}}$ restricted to $\arc{32}$ is $g$. Hence $y=A_{\Gamma_0, \mathrm{hBS}}(x)$.\\
	Case 2: $y \in \arc{345}$. The branch of $A_{\Gamma_0, \mathrm{hBS}}$ restricted to $\arc{345}$ is $g^{-1}$. Rewriting $y = g(x)$ as $g^{-1}(y)=x$, we see that $A_{\Gamma_0, \mathrm{hBS}}(y) =x$.\\ 
	Case 3: $y \in \arc{56}$. Then $x \in \arc{12}$. Note that the branch of $A_{\Gamma_0, \mathrm{hBS}}$ restricted to $\arc{56}$ is $h \circ g^{-1}$, and the branch of $A_{\Gamma_0, \mathrm{hBS}}$ restricted to $\arc{12}$ is $h$. Hence, 
	$$
	A_{\Gamma_0, \mathrm{hBS}}(y) = h(g^{-1}(y))=h(g^{-1}(g(x))=h(x)=A_{\Gamma_0, \mathrm{hBS}}(x).
	$$
	This shows that $x$ and $y$ are grand orbit equivalent under $A_{\Gamma_0, \mathrm{hBS}}$.

	Next, if $y=g^{-1}.x$, then $x=g.y$ and 
	exchanging the roles of $x, y$ in the
	previous paragraph shows that  $x, y$ are grand orbit equivalent under $A_{\Gamma_0, \mathrm{hBS}}$. 
	Finally, by the symmetry of the setup, the same argument applies to $h, h^{-1}$.
\end{proof}

As a circle covering, the degree of $A_{\Gamma_0, \mathrm{hBS}}$ is equal to $4$. This can be easily seen from the actions of the pieces of $A_{\Gamma_0, \mathrm{hBS}}$ (along with their range) listed in Section~\ref{hdm_special_subsec}. Thus, we have now exhibited two different examples of \pwfm maps that are orbit equivalent to a thrice punctured sphere Fuchsian group; namely, the Bowen-Series map (of degree $3$) and the \hdm $A_{\Gamma_0, \mathrm{hBS}}$ defined above (of degree $4$). Moreover, the polygonal degree of $\widehat{A}_{\Gamma_0, \mathrm{hBS}}$ is $2$, while the Bowen-Series map induces a self-homeomorphism on the boundary of its fundamental domain.

\subsection{Folding and higher degree maps for general punctured spheres}\label{sec-genfold}

We follow the scheme of Section \ref{sec-cfm} above and generalize it to the case of $S_{0,k}$-a sphere with $k$ punctures, $k>3$. We shall use Figure~\ref{cfmgen} below as an illustration for the general case. Fix a (closed) fundamental domain of $\Gamma_0=G_{k-1}$ (see Subsection~\ref{b_s_punc_sphere_subsec} for the definition of $G_{k-1}$), given by an ideal $(2k-2)-$gon $W$ (the figure illustrates the $k=4$ case). For definiteness, let us assume that the ideal vertices of $W$ are the $(2k-2)$-th roots of unity. To make the book-keeping a little easier, we modify the notation as follows.
\begin{enumerate}
	\item The vertices of $W$ on the bottom semi-circle are numbered $1=1_-$, $2_-$ $\cdots$, $k_-=k$ in counter-clockwise order.
	\item  The vertices of $W$ on the top semi-circle are numbered $1, 2, \cdots, k$ in clockwise order.
	\item Between vertices $i, i+1$ (and including $i, i+1$) on the top semi-circle, there are 
	$2k-2$ vertices given by the vertices of $g_i.W$ (noting that $g_i.W \cap W$ equals the bi-infinite geodesic $\overline{i (i+1)}$). We label the 
	$2k-4$ vertices strictly between $i, i+1$ as $\{i,2\}, \{i,3\}, \cdots,
	\{i,2k-3\}$ in clockwise order. 
\end{enumerate}

The generators of $\Gamma_0$ are given by $g_1, \cdots, g_{k-1} $, where $g_i$ takes the edge $\overline{i_- (i+1)_-}$ to the  bi-infinite geodesic $\overline{i (i+1)}$. 

\subsubsection{A \cfm for $S_{0,k}$}\label{cfm_general_subsec}
Define  $R$ as
$$
R=\Int{\left(W \cup \bigcup_{i=1, \cdots, k-1} g_i.W\right)}, 
$$ 
so that $\overline{i (i+1)}$
are diagonals of $R$.

\begin{figure}[ht!]
\captionsetup{width=0.96\linewidth}
	\begin{tikzpicture}
		\node[anchor=south west,inner sep=0] at (0,0) {\includegraphics[width=0.5\linewidth]{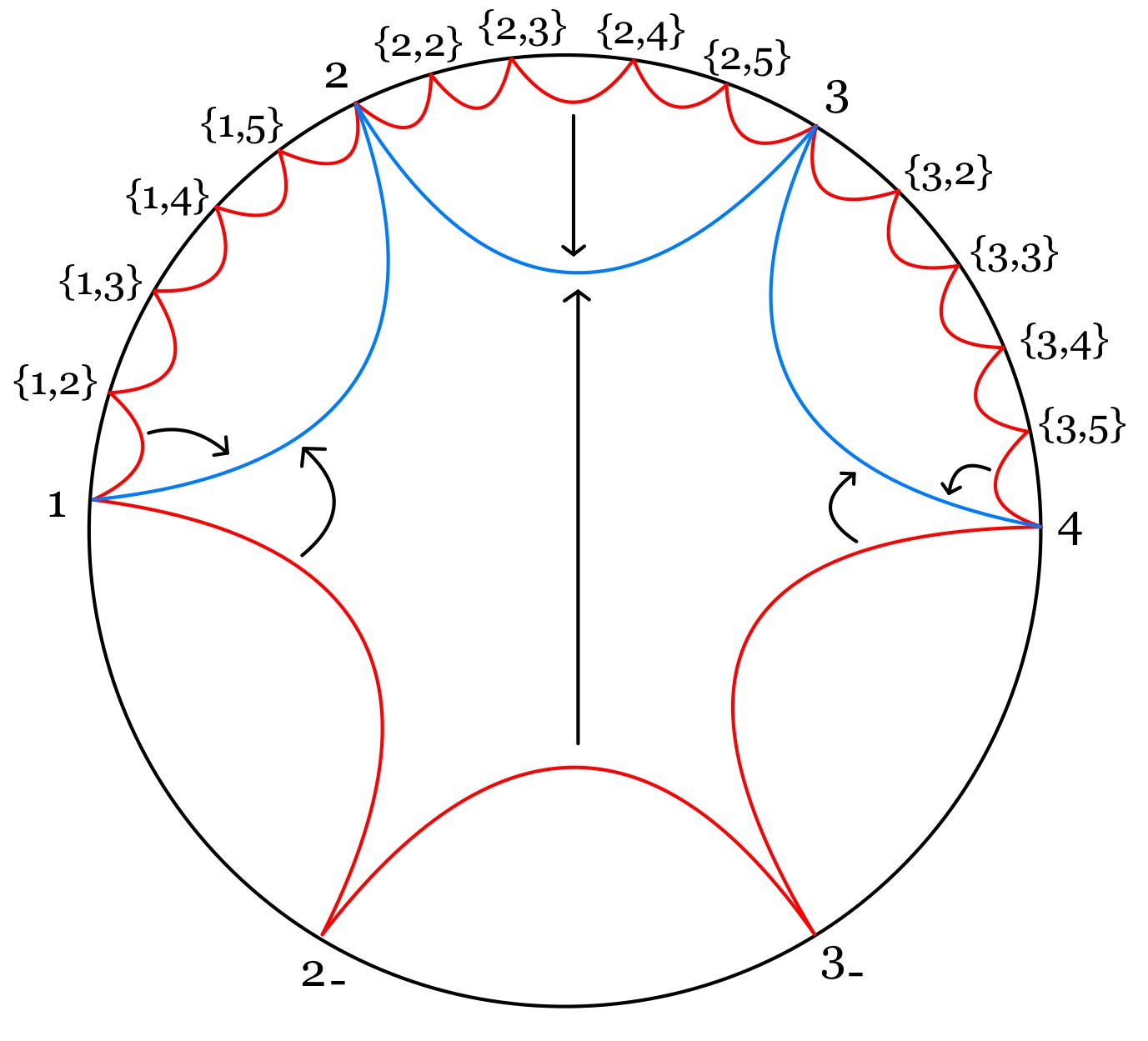}};
		\node[anchor=south west,inner sep=0] at (7,0) {\includegraphics[width=0.44\linewidth]{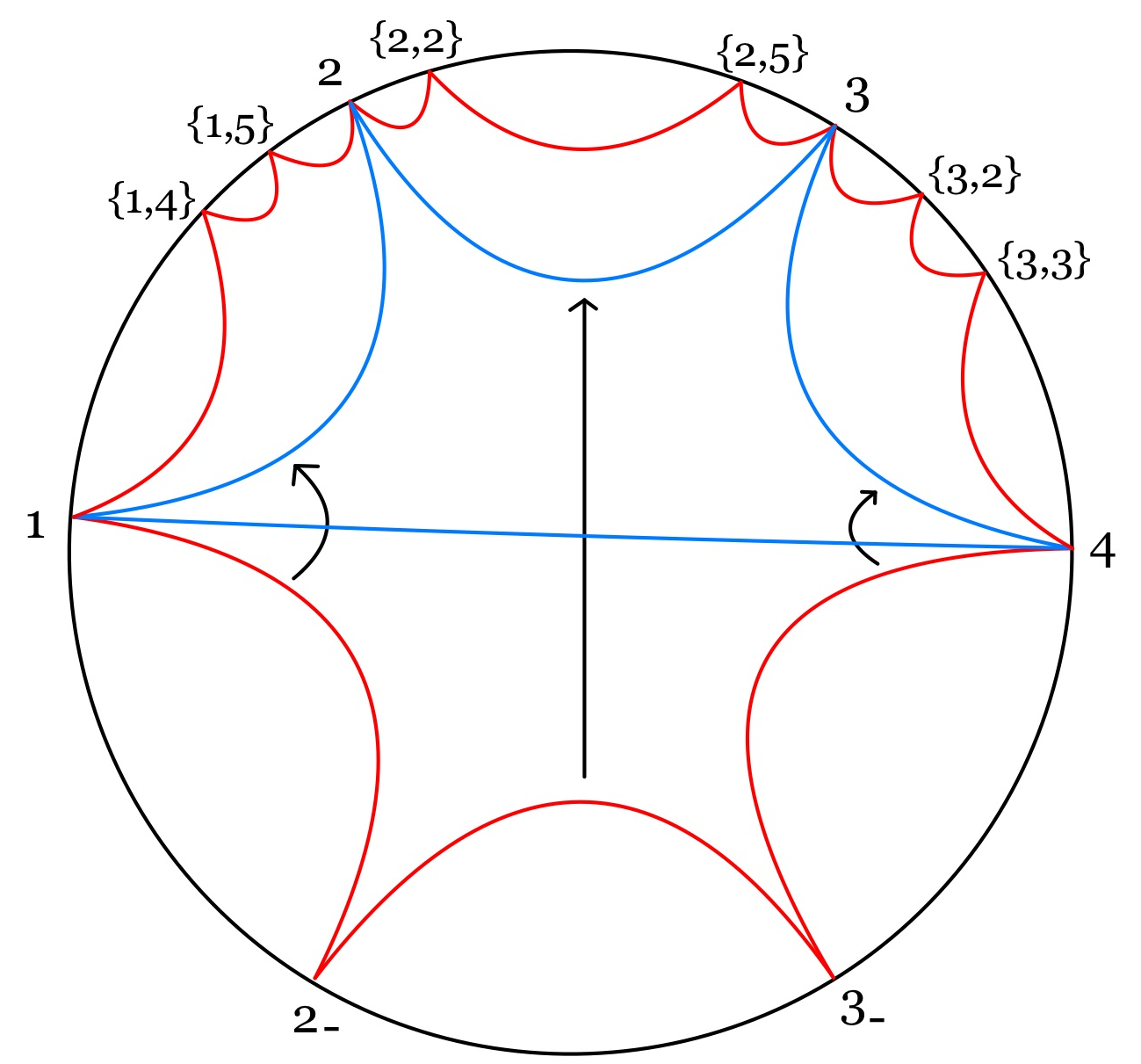}};
		
		\node at (4.4,2.9) {\begin{small}$g_3$\end{small}};
		\node at (3.5,3) {\begin{small}$g_2$\end{small}};
		\node at (2.1,2.9) {\begin{small}$g_1$\end{small}};
		\node at (1.25,3.6) {\begin{small}$g_1^{-1}$\end{small}};
		\node at (3.5,4.8) {\begin{small}$g_2^{-1}$\end{small}};
		\node at (5.12,3.38) {\begin{small}$g_3^{-1}$\end{small}};
		\node at (10.94,2.75) {\begin{small}$g_3$\end{small}};
		\node at (10.1,3) {\begin{small}$g_2$\end{small}};
		\node at (8.84,2.78) {\begin{small}$g_1$\end{small}};
	\end{tikzpicture}
	\caption{Fundamental domains for a completely folding map and a higher degree map without folding: $4$ punctures}
	\label{cfmgen}
\end{figure}

As in Section \ref{sec-cfm}, we define $A_{\Gamma_0, \mathrm{cfm}}$ in terms of its pieces as follows. Recall that $\arc{ij}$ stands for an arc with its endpoints at the break-points $i, j$ such that there are no other break-points of $A_{\Gamma_0, \mathrm{cfm}}$ in the arc.

\begin{itemize}
	\item On the arc $\arc{\ i_- (i+1)_-\ }$, define $A_{\Gamma_0, \mathrm{cfm}}$ to be $g_i$ for $i=1, \cdots, k-1$. Then $A_{\Gamma_0, \mathrm{cfm}}(\arc{\ i_- (i+1)_-\ })$ equals the complement of (the interior of) the arc $\arc{\ i (i+1)\ }$ in $\bS^1$.
	
	\item For every $i=1, \cdots, k-1$, and on each of the $k-1$ short arcs $\arc{\ \{i,j\} \{i,j+1\}\ }$ for $i \leq j \leq i+k-2$ between $i, i+1$, define $A_{\Gamma_0, \mathrm{cfm}}$ to be $g_i^{-1}$. Then $A_{\Gamma_0, \mathrm{cfm}}(\cup_{j=i}^{i+k-2}\arc{\ \{i,j\} \{i,j+1\} })$ equals the upper semi-circle between $1$ and $k$. (Here, for notational convenience, we identify $\{i,1\}$ with $i$ and $\{i, i+2k-2\}$ with $i+1$.)
	Also, for $i \leq j \leq i+k-2$, $A_{\Gamma_0, \mathrm{cfm}}$ maps the clockwise arc from $\{i,j\}$ to $\{i,j+1\}$ onto the clockwise arc from $j$ to $j+1$. We refer to the clockwise arcs from $\{i,j\}$ to $\{i,j+1\}$ (for $i \leq j \leq i+k-2$) as \emph{short folding arcs under $A_{\Gamma_0, \mathrm{cfm}}$}.
	
	\item For $i\in\{2,\cdots, k-1\}$ and $1\leq j\leq i-1$, set $j=i-s$, so that
	$1\leq s \leq i-1$. We  define $A_{\Gamma_0, \mathrm{cfm}}$ to be $g_s \circ g_i^{-1}$ on $\arc{\ \{i,j\} \{i,j+1\}\ }$. Thus, for $j\leq i-1$, $A(\arc{\{i,j\} \{i,j+1\}})$ equals the counter-clockwise (long) arc from $s$ to $s+1$.

	\item For $i\in\{1,\cdots, k-2\}$ and $i+k-1 \leq j\leq 2k-3$, let $j=i+k-1+t$, so that
	$0\leq t \leq k-2-i$.
	We  define $A_{\Gamma_0, \mathrm{cfm}}$ to be $g_{k-1-t} \circ g_i^{-1}$ on $\arc{\ \{i,j\} \{i,j+1\}\ }$.
	Thus, for $i+k-1 \leq j\leq 2k-3$, $A(\arc{ \{i,j\} \{i,j+1\}} )$ equals the counter-clockwise (long) arc from $k-1-t$ to $k-t$. 
	
	We refer to the clockwise arcs from $\{i,j\}$ to $\{i,j+1\}$ (for $j\leq i-1$ or $ 
	i+k-1 \leq j$) as \emph{long folding arcs under $A$}. 
	\item Note that $A_{\Gamma_0, \mathrm{cfm}}(i)=i$ for all $i=1,\cdots, k$.
\end{itemize}

It is easy to see from the above definition that $A_{\Gamma_0, \mathrm{cfm}}:\bS^1\to\bS^1$ is a \cfm. As any (marked) group $\Gamma\in\mathrm{Teich}(\Gamma_0)$ is conjugate to $\Gamma_0$ via a quasiconformal homeomorphism of $\widehat{\C}$ that preserves $\bS^1, \disk$ and respects the markings, we define the associated \cfm $A_{\Gamma, \mathrm{cfm}}$ to be the conjugate of $A_{\Gamma_0, \mathrm{cfm}}$ under such a quasiconformal homeomorphism.

\begin{rmk}
It is not hard to cook up other examples of completely folding maps. However, we do not know of any other \cfm that is orbit equivalent to the Fuchsian group generated by its pieces.
\end{rmk}

\subsubsection{A \hdm for $S_{0,k}$}\label{hdm_general_subsec} 

Again, as in Section \ref{sec-cfm}, define $A_{\Gamma_0, \mathrm{hBS}}$ to be the minimal \pwfm\ map coinciding with $A_{\Gamma_0, \mathrm{cfm}}$ on $\bS^1$. Denote the canonical extension of $A_{\Gamma_0, \mathrm{hBS}}$ by $\widehat{A}_{\Gamma_0, \mathrm{hBS}}$, its canonical domain of definition in $\overline{\disk}$ by $\mathcal{D}_{\Gamma_0, \mathrm{hBS}}$, and the fundamental domain of $\widehat{A}_{\Gamma_0, \mathrm{hBS}}$ by $R_{\Gamma_0, \mathrm{hBS}}$. Further, let $D$ be the open ideal polygon bounded by the bi-infinite geodesics $\overline{12}, \overline{23},\cdots, \overline{(k-1) k}, \overline{k 1}$. Evidently, all the edges of $D$ are (non-intersecting) diagonals of $R_{\Gamma_0, \mathrm{hBS}}$, each ideal vertex of $D$ is fixed by $A_{\Gamma_0, \mathrm{hBS}}$, each edge of $R_{\Gamma_0, \mathrm{hBS}}$ is mapped by $\widehat{A}_{\Gamma_0, \mathrm{hBS}}$ to an edge of $D$, and $\widehat{A}_{\Gamma_0, \mathrm{hBS}}$ has no diagonal folds. Therefore, $\widehat{A}_{\Gamma_0, \mathrm{hBS}}$ is a \hdm having $D$ as its inner domain. 

\begin{defn}[\bf{Higher Bowen-Series map}]\label{higher_bs_def}
	We call the \pwfm map $A_{\Gamma_0, \mathrm{hBS}}$ the \emph{higher Bowen-Series map} of $\Gamma_0$ (associated with the fundamental domain $W$). For any (marked) group $\Gamma\in\mathrm{Teich}(\Gamma_0)$, we define the higher Bowen-Series map of (the marked group) $\Gamma$ to be the conjugate of $A_{\Gamma_0, \mathrm{hBS}}$ under a quasiconformal homeomorphism of $\widehat{\C}$ that conjugates $\Gamma_0$ to $\Gamma$ (and respects the marking), and denote it by $A_{\Gamma, \mathrm{hBS}}$.
\end{defn}

Clearly, the higher Bowen-Series map of each $\Gamma\in\mathrm{Teich}(\Gamma_0)$ is a \hdm. We refer the reader to \cite[Proposition~5.2]{mj-muk-1} for a characterization of higher Bowen-Series maps among all higher degree maps without folding.

\subsubsection{Connections between Bowen-Series and higher Bowen-Series maps}\label{conn_bs_hbs_subsubsec}

The next two propositions are about the relationship between Bowen-Series maps and higher Bowen-Series maps (for $\Gamma\in\mathrm{Teich}(\Gamma_0)$). In fact, Proposition~\ref{hbs_alternative_prop} will give an alternative, more direct construction of the higher Bowen-Series map of $\Gamma$ in terms of the Bowen-Series maps of $\Gamma$ associated with various overlapping fundamental domains.

\begin{prop}[\bf{Characterizing higher Bowen-Series maps as piecewise Bowen-Series maps}]\label{hbs_alternative_prop}\cite[Proposition 4.5]{mj-muk-1}
	Let $W$ be a (closed) fundamental domain for a Fuchsian group $\Gamma\in\mathrm{Teich}(\Gamma_0)$ (uniformizing a $k$-times punctured sphere) which is an ideal $(2k-2)$-gon. We label the ideal vertices of $W$ as $1=1_-, 2_-,\cdots, (k-1)_-, k_-=k, k-1, \cdots, 2$ in counterclockwise order, and assume that the side-pairing transformations of $W$ (generating $\Gamma$) are given by $g_1, \cdots, g_{k-1} $, where $g_i$ takes the edge $\overline{i_- (i+1)_-}$ to the  edge  $\overline{i (i+1)}$.
	
	Further, let $D$ be the interior of the ideal polygon bounded by the bi-infinite geodesics $\overline{12}$, $\overline{23}$, $\cdots$, $\overline{(k-1) k}$, $\overline{k 1}$, and $P$ the interior of the ideal polygon bounded by the bi-infinite geodesics $\overline{1_- 2_-}$, $\overline{2_- 3_-}$, $\cdots$, $\overline{(k-1)_- k_-}$, $\overline{k_- 1_-}$. Then the following hold.
	
	\begin{enumerate}
		\item  $W=\overline{D}\cup\overline{P}$, and for each $j\in\{1,\cdots, k-1\}$, $\overline{D}\cup \overline{g_j(P)}$ is a (closed) fundamental domain for $\Gamma$.

		\item On the clockwise arc from $j$ to $j+1$, the higher Bowen-Series map $A_{\Gamma, \mathrm{hBS}}$ equals the Bowen-Series map of $\Gamma$ associated with the (closed) fundamental domain $\overline{D}\cup \overline{g_j(P)}$ ($j\in\{1,\cdots, k-1\}$), and on the counterclockwise arc from $1$ to $k$, $A_{\Gamma, \mathrm{hBS}}$ equals the Bowen-Series map of $\Gamma$ associated with the fundamental domain $W=\overline{D}\cup \overline{P}$.
	\end{enumerate}
	 Conversely, a map $A:\bS^1\to\bS^1$ defined as in condition (2) above is a higher Bowen-Series map orbit equivalent to $\Gamma$, and the fundamental domain of $A$ is given by $R=\Int{\left(W \cup \bigcup_{i=1, \cdots, k-1} g_i.W\right)}$.
\end{prop}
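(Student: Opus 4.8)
The plan is to verify the three bundled assertions in turn, leaning heavily on the explicit combinatorial picture set up in Subsection~\ref{sec-genfold} (and illustrated in Figure~\ref{cfmgen}), together with the already-established orbit equivalence (Proposition~\ref{prop-cfdhd-orbeq} and its generalization) and the definition of Bowen-Series maps via the even corners property.

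First I would prove item (1). The equality $W=\overline{D}\cup\overline{P}$ is immediate from the labeling: $D$ is the ideal polygon on the vertices $1,2,\cdots,k$ lying on the top semicircle, $P$ is the ideal polygon on the vertices $1_-,2_-,\cdots,k_-$ on the bottom semicircle, and the two chains of geodesics together with the shared edges $\overline{1k}=\overline{1_-k_-}$ (recall $1=1_-$, $k=k_-$) bound exactly $W$. For the second claim, since $g_j$ carries $\overline{j_-(j+1)_-}$ to $\overline{j(j+1)}$, it carries $P$ to a polygon $g_j(P)$ on the far side of the edge $\overline{j(j+1)}$ of $D$; gluing $g_j(P)$ to $\overline{D}$ along that edge produces an ideal polygon with the same ideal vertex set as $W$ up to the $\Gamma$-identifications coming from the side pairings, hence a region whose area equals that of $W$ and which meets each of its $\Gamma$-translates only in its boundary. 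This is most cleanly argued by checking that $\overline{D}\cup\overline{g_j(P)}$ is obtained from $W$ by the single ``edge flip'' replacing $\overline{j_-(j+1)_-}$-side of $W$ with its $g_j$-image, an operation well known to preserve the property of being a fundamental domain; alternatively one invokes that $R=\Int{\left(W\cup\bigcup_i g_i.W\right)}$ tiles into $k$ copies of $W$ and picks out the sub-union $\overline{D}\cup\overline{g_j(P)}$.

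Next, item (2) is the heart of the matter. On the clockwise arc from $j$ to $j+1$, the higher Bowen-Series map $A_{\Gamma,\mathrm{hBS}}$ agrees pointwise with $A_{\Gamma_0,\mathrm{cfm}}$, and from the piecewise description in Subsection~\ref{cfm_general_subsec} its pieces there are precisely the elements $g_i^{-1}$, $g_s\circ g_i^{-1}$, $g_{k-1-t}\circ g_i^{-1}$ — exactly the generators $\{g_i\}$ expressed relative to the \emph{new} fundamental domain $\overline{D}\cup\overline{g_j(P)}$, because conjugating/retracting by $g_j$ turns the edges of $\overline{D}\cup\overline{g_j(P)}$ into the edges of $W$. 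Thus I would check that $\overline{D}\cup\overline{g_j(P)}$ satisfies the even corners property (its defining geodesics all lie in the net $\mathcal N$ of $\Gamma$-images of edges of $W$, since each such geodesic is $\overline{i(i+1)}$ or a $g_j$-translate of some $\overline{i_-(i+1)_-}$), identify the side-pairing transformations of this domain, and observe that Definition~\ref{b_s_map_def} then produces exactly the formulas listed for $A_{\Gamma_0,\mathrm{cfm}}$ on the clockwise arc from $j$ to $j+1$. The counterclockwise arc from $1$ to $k$ is the easy case: there $A_{\Gamma_0,\mathrm{cfm}}=g_i$ on $\arc{i_-(i+1)_-}$, which is literally the Bowen-Series map of $W$ by Definition~\ref{b_s_map_def}. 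Matching up the arc decompositions — the break-points of the Bowen-Series map of $\overline{D}\cup\overline{g_j(P)}$ restricted to the relevant arc must coincide with the break-points of $A_{\Gamma_0,\mathrm{hBS}}$ there, after discarding superfluous ones as in Subsection~\ref{hdm_general_subsec} — is the bookkeeping-heavy step, and this is where I expect the main obstacle to lie: one must be careful that the fundamental domains $\overline{D}\cup\overline{g_j(P)}$ for different $j$ agree on overlaps exactly along $\overline{D}$, so that the ``piecewise Bowen-Series'' map is well-defined and continuous across the vertices $1,2,\cdots,k$.

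Finally, for the converse, suppose $A$ is built by the recipe in (2). Each piece of $A$ is by construction an element of $\Gamma$, so $A$-grand orbits are contained in $\Gamma$-orbits; conversely, on each arc $A$ agrees with a genuine Bowen-Series map of $\Gamma$ associated with a fundamental domain satisfying even corners, and Proposition~\ref{orbit_equiv_prop} gives orbit equivalence of each such Bowen-Series map with $\Gamma$ away from finitely many points — combining the pieces and using that the break-points $1,\cdots,k$ are $\Gamma$-fixed points lying in their own grand orbits, one upgrades this to full orbit equivalence exactly as in the proof of Proposition~\ref{prop-cfdhd-orbeq} (checking generators $g_i^{\pm1}$ case by case). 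That $A$ is a \hdm with inner domain $D$ follows from Definition~\ref{def-nofold}: the $\overline{i(i+1)}$ are non-intersecting diagonals of $R=\Int{\left(W\cup\bigcup_i g_i.W\right)}$ bounding the ideal polygon $D\subset R$, each vertex of $D$ is $A$-fixed, every edge of $R$ maps under $\widehat A$ to one of these diagonals, and minimality of $A$ rules out diagonal folds — hence $A$ is the higher Bowen-Series map $A_{\Gamma,\mathrm{hBS}}$ and its fundamental domain is the asserted $R$.
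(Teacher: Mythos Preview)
The paper does not actually prove this proposition; it is stated with citation to \cite[Proposition 4.5]{mj-muk-1} and no argument is given here, so there is nothing in-paper to compare against. Your overall strategy --- verify (1) directly, then match the explicit piecewise description of $A_{\Gamma,\mathrm{hBS}}$ from Subsection~\ref{cfm_general_subsec} against the Bowen-Series map of each $\overline{D}\cup\overline{g_j(P)}$, then obtain the converse by falling back on the generator-by-generator orbit-equivalence check --- is the natural one and is along the lines of what the cited source does.

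There is one genuine gap in your item~(2). The claim that the pieces on the clockwise arc from $j$ to $j+1$ arise by ``conjugating/retracting by $g_j$'' is not right. The side-pairing transformations of $W_j:=\overline{D}\cup\overline{g_j(P)}$ are not conjugates $g_j g_l g_j^{-1}$; rather, the edge $g_j(\overline{l_-(l+1)_-})$ of $W_j$ is paired with the edge $\overline{l(l+1)}$ by the composition $g_l\circ g_j^{-1}$ (for $l\neq j$), and the diagonal $\overline{k1}$ is paired with $g_j(\overline{k_-1_-})$ by $g_j$. You must write these out explicitly, verify the even corners property for $W_j$, and then check that Definition~\ref{b_s_map_def} produces exactly the pieces $g_j^{-1}$, $g_s\circ g_j^{-1}$, $g_{k-1-t}\circ g_j^{-1}$ on the correct subarcs. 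This is the bookkeeping you flag as the main obstacle but do not carry out, and it is where the content of the proof actually lives. (Your indices also slip: you write $g_i^{-1}$ where you mean $g_j^{-1}$ throughout this paragraph.)

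For the converse, invoking Proposition~\ref{orbit_equiv_prop} arc-by-arc does not by itself yield orbit equivalence for the patched-together map $A$: grand orbits of $A$ bounce between arcs governed by \emph{different} Bowen-Series maps, so separate orbit equivalence on each arc is insufficient. Your fallback on the direct case-by-case check of Proposition~\ref{prop-cfdhd-orbeq-gen} is the correct argument; the appeal to Proposition~\ref{orbit_equiv_prop} should simply be dropped.
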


\begin{rmk}\label{hbs_name_rem}
	The preceding description of $A_{\Gamma, \mathrm{hBS}}$ shows that $A_{\Gamma, \mathrm{hBS}}$ is made up of Bowen-Series maps corresponding to various (overlapping) fundamental domains of $\Gamma$. This justifies the terminology `higher Bowen-Series maps'.
\end{rmk}

Higher Bowen-Series maps also arise as second iterates of suitable Bowen-Series maps.

\begin{prop}[\bf{Higher Bowen-Series as second iterate of Bowen-Series}]\cite[Corollary~5.6]{mj-muk-1}\label{second_iterate_hbs_cor} Let $d\geq 2$.
	\noindent\begin{enumerate}
		\item For $\Gamma\in\mathrm{Teich}(G_d)=\mathrm{Teich}(S_{0,d+1})$ (respectively, $\Gamma\in\mathrm{Teich}(G_{d,2})$), we have $A_{\Gamma,\mathrm{BS}}^{ 2}=A_{\Gamma',\mathrm{hBS}}$, where $\Gamma'$ is an index-two subgroup of $\Gamma$ with $\disk/\Gamma'\cong S_{0,2d}$. 
		
		\item For $\Gamma\in\mathrm{Teich}(G_{d,1})$, we have $A_{\Gamma,\mathrm{BS}}^{ 2}=A_{\Gamma',\mathrm{hBS}}$, where $\Gamma'$ is an index-two subgroup of $\Gamma$ with $\disk/\Gamma'\cong S_{0,2d-1}$. 
	\end{enumerate}
	
	In all cases, the second iterate of the Bowen-Series map of $\Gamma$ is orbit equivalent to an index-two subgroup of $\Gamma$.
	
The degree of the higher Bowen-Series map as a self-covering of $\bS^1$ is	$(\chi-1)^2$,
where $\chi = 2-k$ is the Euler characteristic of $S_{0,k}$.
\end{prop}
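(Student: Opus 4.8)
The strategy is to derive the degree formula from items~(1) and~(2), which already express every higher Bowen-Series map of a punctured sphere group as the second iterate of an ordinary Bowen-Series map, together with the fact that the degree of a covering of $\bS^1$ is multiplicative under composition. Fix $k\geq 3$ and let $\Gamma_0=G_{k-1}$ be the base-point group uniformizing $S_{0,k}$. For any $\Gamma'\in\mathrm{Teich}(\Gamma_0)$ the higher Bowen-Series map $A_{\Gamma',\mathrm{hBS}}$ is a quasiconformal conjugate of $A_{\Gamma_0,\mathrm{hBS}}$ (Definition~\ref{higher_bs_def}), and such a conjugacy does not change the degree of a circle covering; hence it suffices to compute $\deg A_{\Gamma_0,\mathrm{hBS}}$, and the answer depends only on $k$.

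Next I would split according to the parity of $k$. If $k=2d$ is even, apply item~(1) with $\Gamma=G_d$: it yields $A_{\Gamma',\mathrm{hBS}}=A_{G_d,\mathrm{BS}}^{ 2}$ for a group $\Gamma'\in\mathrm{Teich}(S_{0,2d})=\mathrm{Teich}(\Gamma_0)$, so $\deg A_{\Gamma_0,\mathrm{hBS}}=\bigl(\deg A_{G_d,\mathrm{BS}}\bigr)^{2}$, and Proposition~\ref{b_s_poly_conjugate_prop_1} gives $\deg A_{G_d,\mathrm{BS}}=2d-1$; hence $\deg A_{\Gamma_0,\mathrm{hBS}}=(2d-1)^{2}=(k-1)^{2}$. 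If $k=2d-1$ is odd, apply item~(2) with $\Gamma=G_{d,1}$: it yields $A_{\Gamma',\mathrm{hBS}}=A_{G_{d,1},\mathrm{BS}}^{ 2}$ with $\disk/\Gamma'\cong S_{0,2d-1}$, and the orbifold variant of Proposition~\ref{b_s_poly_conjugate_prop_1} (\cite[Propositions~3.4, 3.5]{mj-muk-1}) gives $\deg A_{G_{d,1},\mathrm{BS}}=2d-2$; hence $\deg A_{\Gamma_0,\mathrm{hBS}}=(2d-2)^{2}=(k-1)^{2}$. In both cases $\deg A_{\Gamma_0,\mathrm{hBS}}=(k-1)^{2}$. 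Since $\chi=\chi(S_{0,k})=2-k$, we have $k-1=-(\chi-1)$, and therefore $\deg A_{\Gamma_0,\mathrm{hBS}}=(\chi-1)^{2}$, as asserted. The parenthetical case $\Gamma\in\mathrm{Teich}(G_{d,2})$ in item~(1) produces the same punctured sphere $S_{0,2d}$ and so needs no separate treatment.

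The only substantive ingredient beyond multiplicativity of degree and the identity $(k-1)^{2}=(\chi-1)^{2}$ is the value of $\deg A_{\Gamma,\mathrm{BS}}$ for the (possibly orbifold) group whose square realizes the higher Bowen-Series map --- Proposition~\ref{b_s_poly_conjugate_prop_1} for the punctured sphere groups $G_d$, and its orbifold counterpart for the groups $G_{d,1}$ --- so I expect the orbifold degree count to be the only delicate point. An alternative route that avoids the orbifold input entirely is to compute $\deg A_{\Gamma_0,\mathrm{hBS}}$ directly from the explicit piecewise description of $A_{\Gamma_0,\mathrm{hBS}}$ in Subsections~\ref{cfm_general_subsec}--\ref{hdm_general_subsec} (equivalently, from the pointwise-equal completely folding map $A_{\Gamma_0,\mathrm{cfm}}$), by counting the pieces whose image arc contains a fixed generic point of $\bS^1$; tracking the action of the pieces together with the long and short folding arcs again gives $(k-1)^{2}$, but this is routine bookkeeping that is longer than the argument via items~(1)--(2).
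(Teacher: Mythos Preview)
Your deduction of the degree formula from items~(1)--(2) and multiplicativity of covering degree is correct. Note, however, that this expository paper does not itself prove the proposition: items~(1)--(2) and the orbit equivalence are cited as \cite[Corollary~5.6]{mj-muk-1}, and the degree statement is attributed to \cite[Section~4.3.2]{mj-muk-1}. Since Section~4.3.2 precedes Corollary~5.6 in the cited paper, the original argument for the degree is necessarily the direct piecewise count you describe as your ``alternative route,'' not the second-iterate identification.

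So your main argument runs the logic in the opposite direction from the source: you use the harder structural result (the second-iterate identity) to read off the easier numerical one. This is perfectly valid and pleasantly short for even $k$, but for odd $k$ it imports the orbifold degree $\deg A_{G_{d,1},\mathrm{BS}}=2d-2$ from \cite[Propositions~3.4,~3.5]{mj-muk-1}, as you yourself flag. The direct bookkeeping from the explicit pieces in Subsections~\ref{cfm_general_subsec}--\ref{hdm_general_subsec} avoids both the parity split and the orbifold input, which is presumably why the cited reference takes that route; your version has the virtue of making the relation between the two degree formulas ($2d-1$ for Bowen--Series, $(k-1)^2$ for higher Bowen--Series) transparent.
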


The last statement may be found in \cite[Section 4.3.2]{mj-muk-1}.
We do not know if higher iterates of Bowen-Series maps produce further examples of mateable maps (see Question \ref{question_oe_test} below). 
With careful combinatorial book-keeping, the arguments of the proof of Proposition~\ref{prop-cfdhd-orbeq} can be adapted for the general case.

\begin{prop}[\bf{Orbit equivalence}]\cite[Proposition~4.7]{mj-muk-1}\label{prop-cfdhd-orbeq-gen}
	Let $\Gamma\in\mathrm{Teich}(\Gamma_0)$, and $A_{\Gamma, \mathrm{cfm}},\ A_{\Gamma, \mathrm{hBS}}$ be as above. Then $A_{\Gamma, \mathrm{cfm}},\ A_{\Gamma, \mathrm{hBS}}$ are orbit equivalent to $\Gamma$.
\end{prop}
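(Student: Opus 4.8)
The plan is to follow the template of the proof of Proposition~\ref{prop-cfdhd-orbeq} (the three-punctured case), upgrading the explicit case-by-case analysis to the general combinatorial set-up of Subsection~\ref{sec-genfold}. Since $A_{\Gamma_0, \mathrm{cfm}}$ and $A_{\Gamma_0, \mathrm{hBS}}$ agree pointwise on $\bS^1$, it suffices to prove orbit equivalence for $A_{\Gamma_0, \mathrm{hBS}}$; moreover, because $\Gamma\in\mathrm{Teich}(\Gamma_0)$ is obtained from $\Gamma_0$ by a quasiconformal conjugacy preserving $\bS^1$ and the markings, and both $A_{\Gamma, \mathrm{cfm}}$ and $A_{\Gamma, \mathrm{hBS}}$ are defined as the corresponding conjugates, it is enough to treat $\Gamma=\Gamma_0=G_{k-1}$.

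First I would record the easy inclusion: every piece of $A_{\Gamma_0, \mathrm{hBS}}$ is (by construction, see Subsection~\ref{cfm_general_subsec}) a word in the generators $g_1,\dots,g_{k-1}$ of $\Gamma_0$, so $\mathrm{GO}_{A_{\Gamma_0,\mathrm{hBS}}}(x)\subset \Gamma_0\cdot x$ for every $x\in\bS^1$. The substance is the reverse inclusion $\Gamma_0\cdot x\subset \mathrm{GO}_{A_{\Gamma_0,\mathrm{hBS}}}(x)$, and since grand-orbit equivalence is itself an equivalence relation it suffices to check, for each generator $g_s$ and its inverse, that $x$ and $g_s(x)$ lie in the same $A_{\Gamma_0,\mathrm{hBS}}$-grand orbit. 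Fix $s\in\{1,\dots,k-1\}$ and put $y=g_s(x)$. The natural case division is by the location of $y$ among the arcs of $\bS^1$ cut out by the break-points, exactly as in the three-punctured proof: (i) if $y$ lies in the closed counterclockwise long arc on which $A_{\Gamma_0,\mathrm{hBS}}$ acts by $g_s$ (the image of $\arc{s_-(s+1)_-}$), i.e.\ $x\in\arc{s_-(s+1)_-}$, then $y=A_{\Gamma_0,\mathrm{hBS}}(x)$ directly; (ii) if $y$ lies on one of the short folding arcs between $s$ and $s+1$ where $A_{\Gamma_0,\mathrm{hBS}}$ acts by $g_s^{-1}$, then $A_{\Gamma_0,\mathrm{hBS}}(y)=g_s^{-1}(y)=x$; (iii) in the remaining cases $y$ lies on a folding arc (short or long) between some other pair $j,j+1$, where $A_{\Gamma_0,\mathrm{hBS}}$ acts by $g_m\circ g_j^{-1}$ (with $m=j-s$ or $m=k-1-t$ depending on the sub-case, and with $m$ possibly $0$, reducing to pure $g_j^{-1}$), while the corresponding preimage point $x$ sits on an arc where $A_{\Gamma_0,\mathrm{hBS}}$ acts by $g_m$; a one-line computation $A_{\Gamma_0,\mathrm{hBS}}(y)=g_m(g_j^{-1}(g_s(x)))$ together with the bookkeeping relation between $s,j$ and the overlapping-fundamental-domain structure of Proposition~\ref{hbs_alternative_prop} collapses this to $g_m(\cdot)=A_{\Gamma_0,\mathrm{hBS}}(x)$, exhibiting a common forward image. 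The symmetry $x\mapsto g_s(y)=x$ then handles $g_s^{-1}$ by swapping the roles of $x,y$.

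The key conceptual point that makes case (iii) work — and the one I'd emphasize — is the description in Proposition~\ref{hbs_alternative_prop}: $A_{\Gamma_0,\mathrm{hBS}}$ restricted to the clockwise arc from $j$ to $j+1$ is the Bowen-Series map of $\Gamma_0$ for the fundamental domain $\overline D\cup\overline{g_j(P)}$, and Bowen-Series maps are orbit equivalent to the group on all but finitely many points (Proposition~\ref{orbit_equiv_prop}); combined with the fact that the various fundamental domains $\overline D\cup\overline{g_j(P)}$, $j=1,\dots,k-1$, and $W$ together tile a $\Gamma_0$-invariant region and share edges along the diagonals $\overline{j(j+1)}$, one sees that applying $A_{\Gamma_0,\mathrm{hBS}}$ amounts to applying a Bowen-Series-type move, so the grand orbit of $A_{\Gamma_0,\mathrm{hBS}}$ on $\bS^1$ saturates each $\Gamma_0$-orbit. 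The finitely many exceptional points of Proposition~\ref{orbit_equiv_prop} are exactly the ideal vertices and their $\Gamma_0$-translates, and one checks separately that these (the points $i$ and the $\{i,j\}$) are fixed or correctly matched: indeed $A_{\Gamma_0,\mathrm{hBS}}(i)=i$ for all $i$, and the parabolic fixed points are handled by the relation $a_k\cdots a_1=1$ of Remark~\ref{rmk-ai}.

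The main obstacle I anticipate is not conceptual but organizational: keeping the index arithmetic straight across all the sub-cases of Subsection~\ref{cfm_general_subsec} — short vs.\ long folding arcs, the substitutions $j=i-s$ and $j=i+k-1+t$, the wrap-around identifications $\{i,1\}=i$ and $\{i,i+2k-2\}=i+1$, and the clockwise-vs-counterclockwise orientation reversal noted after Definition~\ref{def-nofold}. A clean way to organize this is to prove a single lemma of the form ``for every break-point-free arc $\arc{ab}$ of $A_{\Gamma_0,\mathrm{hBS}}$ with piece $\gamma$, and for every generator $g_s$, the arc $g_s(\arc{ab})$ is a union of arcs of $A_{\Gamma_0,\mathrm{hBS}}$ whose pieces, post-composed with $g_s^{-1}$, all lie in $\{\gamma g_s^{-1}\}\cup\{\gamma\}$ up to a further single application of $A_{\Gamma_0,\mathrm{hBS}}$'', after which the three displayed cases are literally the same computation three times; this is precisely the ``careful combinatorial book-keeping'' referred to before the statement, and the argument is otherwise identical in spirit to the thrice-punctured proof above. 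Full details are in \cite[Proposition~4.7]{mj-muk-1}.
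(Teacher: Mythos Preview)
Your overall framework matches the paper's: reduce to $\Gamma_0$ by quasiconformal conjugacy, observe that $A_{\Gamma_0,\mathrm{cfm}}$ and $A_{\Gamma_0,\mathrm{hBS}}$ coincide on $\bS^1$, get the easy inclusion from the fact that pieces lie in $\Gamma_0$, and then run a case analysis on the location of $y=g_s(x)$ for each generator $g_s$, exactly as in Proposition~\ref{prop-cfdhd-orbeq}. This is precisely what the paper intends by ``with careful combinatorial book-keeping, the arguments of the proof of Proposition~\ref{prop-cfdhd-orbeq} can be adapted.''

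However, your case~(iii) is mislocated, and this is not cosmetic. Once case~(i) is handled (i.e.\ $x\in\arc{\,s_-(s{+}1)_-\,}$), the point $y=g_s(x)$ is forced to lie in the \emph{clockwise} arc from $s$ to $s{+}1$, since $g_s$ is a M\"obius bijection carrying $\arc{\,s_-(s{+}1)_-\,}$ onto the complement of that clockwise arc. So in the residual cases $y$ is always between $s$ and $s{+}1$, never ``between some other pair $j,j{+}1$.'' Consequently the piece acting at $y$ in case~(iii) is $g_m\circ g_s^{-1}$ (a long folding piece with first factor $g_s^{-1}$), and the computation collapses immediately: $A(y)=g_m(g_s^{-1}(g_s(x)))=g_m(x)$. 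The remaining verification is that $x$ then lies in $\arc{\,m_-(m{+}1)_-\,}$, so that $A(x)=g_m(x)=A(y)$; this follows from the description in Subsection~\ref{cfm_general_subsec}, which says exactly that $g_s^{-1}$ carries the long folding arc $\arc{\{s,j\}\{s,j{+}1\}}$ (with associated index $m$) onto $\arc{\,m_-(m{+}1)_-\,}$. This is the straight generalization of Case~3 in Proposition~\ref{prop-cfdhd-orbeq}, and it is what the book-keeping in \cite[Proposition~4.7]{mj-muk-1} carries out.

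Your detour through Proposition~\ref{hbs_alternative_prop} and the general Bowen--Series orbit equivalence of Proposition~\ref{orbit_equiv_prop} does not rescue the argument as written: knowing that $A_{\Gamma_0,\mathrm{hBS}}$ restricted to each top arc is \emph{a} Bowen--Series map for \emph{some} fundamental domain does not by itself show that iterating $A_{\Gamma_0,\mathrm{hBS}}$ (which mixes these different Bowen--Series maps) saturates $\Gamma_0$-orbits; and the ``finitely many exceptional points'' clause is irrelevant here, since for the punctured-sphere fundamental domains in question the Bowen--Series map is orbit equivalent on all of $\bS^1$ (Proposition~\ref{b_s_poly_conjugate_prop_1}(2)). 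The appeal to Remark~\ref{rmk-ai} is likewise not needed. Fix the location of $y$ in case~(iii) and the proof goes through by the one-line computation above, with no auxiliary input beyond the explicit piece data of Subsection~\ref{cfm_general_subsec}.
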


\subsection{Consequences}\label{sec-hbs-conseq} We now discuss some consequences. \\

\subsubsection{Interpolating between completely folding maps and higher degree  maps without folding}
The completely folding map $A_{\Gamma, \mathrm{cfm}}$ and the higher Bowen-Series map $A_{\Gamma, \mathrm{hBS}}$ described in Sections \ref{sec-cfm} and \ref{sec-genfold} agree on $\bS^1$. We denote the interior of the polygon in Section \ref{sec-genfold}
with vertices $1, \cdots, k$ by $D$. Note that $\overline{D}$ is `half' the (closed) fundamental domain $W$ in the sense that doubling $\overline{D}$ along the bi-infinite geodesic $\overline{1k}$ gives $W$.  Choose $1=i_1 < i_2 < \cdots < i_{l+1} = k$ to be a selection of vertices in clockwise 
cyclic order along the upper semi-circle.
Let $\cup_{1\leq j \leq l} (i_j, i_{j+1}) = \LL$ denote a finite
union of edges and diagonals of $W$ contained in $\overline{D}$. Let $W_0$ denote
the part of $W$  contained above $\LL$ and let $W_\LL=W_0 \cup \LL$.
Set 
$$
R_\LL = \Int{\left(W_\LL \cup \bigcup_{i=1, \cdots, k} g_i.W_\LL\right)}.
$$ 
Then $R_\LL$ is the fundamental domain of the \pwfm map $A_\LL$ whose canonical extension $\hhat{A_\LL}$ has domain $\mathcal{D}_\LL = \disk \setminus R_\LL$.

Note that, for all $\LL$, the map $A_\LL$ equals $A_{\Gamma, \mathrm{cfm}}$ on $\bS^1$. The map $A_{\Gamma, \mathrm{hBS}}$ 
is the unique minimal representative and corresponds to the case
$1=i_1 < i_2 = k$. The map $A_{\Gamma, \mathrm{cfm}}$ lies at the other end of the spectrum,
with $l+1 = k$, and $i_j=j$ for $j=1, \cdots, k$. The maps $A_\LL$ are 
non-minimal representatives whenever $l>1$.\\

\subsubsection{Mateability of completely folding maps and higher Bowen-Series maps} 
We now record the fact that higher Bowen-Series maps satisfy the conditions of Definition~\ref{def-mateable}, and hence can be conformally mated with hyperbolic complex polynomials (of appropriate degree) with Jordan curve Julia sets.

\begin{theorem}[\bf{Fuchsian higher Bowen-Series maps are mateable}]\cite[Theorem~4.8]{mj-muk-1}\label{thm-cfdmateable}
	Let $\Gamma\in\mathrm{Teich}(\Gamma_0)$, and $P\in\mathcal{H}_{(k-1)^2}$ (where $\mathcal{H}_d$ stands for the principal hyperbolic component in the space of degree $d$ polynomials). Then, $\widehat{A}_{\Gamma, \mathrm{hBS}}:\mathcal{D}_{\Gamma, \mathrm{hBS}}\to\overline{\disk}$ (respectively, $\widehat{A}_{\Gamma, \mathrm{cfm}}:\mathcal{D}_{\Gamma, \mathrm{cfm}}\to\overline{\disk}$) and $P:\mathcal{K}(P)\to\mathcal{K}(P)$ are conformally mateable. 
\end{theorem}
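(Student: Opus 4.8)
\textbf{Proof proposal for Theorem~\ref{thm-cfdmateable}.}
The plan is to deduce this theorem from the general conformal mateability criterion, Proposition~\ref{conformal_mating_general_prop}, exactly as Theorem~\ref{moduli_interior_mating_thm} was deduced for ordinary Bowen-Series maps. So the entire task reduces to verifying that $A_{\Gamma,\mathrm{hBS}}$ (and likewise $A_{\Gamma,\mathrm{cfm}}$, which agrees with it pointwise on $\bS^1$) is a \emph{mateable} map in the sense of Definition~\ref{def-mateable2}, and that its circle-covering degree is $(k-1)^2$. Concretely, I would check in turn: (i) $A_{\Gamma,\mathrm{hBS}}$ is a $\pwfm$ map, i.e.\ a piecewise Fuchsian expansive covering whose pieces give a Markov partition; (ii) it is orbit equivalent to $\Gamma_A = \Gamma$; (iii) none of its periodic break-points is asymmetrically hyperbolic; and (iv) its degree equals $(k-1)^2$, matching the hypothesis $P\in\mathcal{H}_{(k-1)^2}$.

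For (ii) I would simply invoke Proposition~\ref{prop-cfdhd-orbeq-gen}, which already records orbit equivalence of both $A_{\Gamma,\mathrm{cfm}}$ and $A_{\Gamma,\mathrm{hBS}}$ to $\Gamma\in\mathrm{Teich}(\Gamma_0)$; since the pieces of $A_{\Gamma,\mathrm{hBS}}$ lie in $\Gamma$ and generate it (it is a $\hdm$ built from the generators $g_i$ and their products, cf.\ Section~\ref{hdm_general_subsec} and Remark~\ref{rmk-ai}), one has $\Gamma_A = \Gamma$, so the group generated by the pieces is itself the group to which $A$ is orbit equivalent, as Definition~\ref{def-mateable2} demands. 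For (iv) I would cite the last sentence of Proposition~\ref{second_iterate_hbs_cor}: the covering degree of a higher Bowen-Series map of $S_{0,k}$ is $(\chi-1)^2 = (k-1)^2$; alternatively one reads it off directly from the explicit piece lists (as was done for $k=3$, where the degree is $4 = (4-1-1)^2$... more precisely $(k-1)^2 = 4$). The reduction $A_{\Gamma,\mathrm{BS}}^{2} = A_{\Gamma',\mathrm{hBS}}$ from Proposition~\ref{second_iterate_hbs_cor} is also a convenient tool here: it lets one transfer known analytic properties (expansivity, $C^1$-smoothness away from break-points, the Markov property inherited from Proposition~\ref{b_s_poly_conjugate_prop_1}) from the ordinary Bowen-Series map to its square.

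The substantive points are (i) and (iii). For (i), expansivity and the covering property follow because $A_{\Gamma,\mathrm{hBS}}$ is (a reparametrization of) the second iterate of an expansive degree-$(2d{-}1)$ Bowen-Series covering, hence expansive of degree $(2d-1)^2$ on the relevant index-two subgroup picture; and the Markov property of the partition $\{I_j\}$ is exactly the content of the $\hdm$ structure established in Section~\ref{hdm_general_subsec}, namely that each edge of $R_{\Gamma_0,\mathrm{hBS}}$ maps onto an edge of the inner domain $D$ with no folds, so that each piece-interval has precisely $|d|$ full preimages — this is the defining condition (5) of Definition~\ref{markov_def}. I expect (iii) to be the main obstacle. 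The break-points of $A_{\Gamma,\mathrm{hBS}}$ that are periodic are precisely the ideal vertices $1,\dots,k$ of the inner domain $D$, which are fixed points of $A$ (condition (2) of Definition~\ref{def-nofold}) and are parabolic fixed points of $\Gamma$ corresponding to the punctures. One must compute the one-sided multipliers $(A)'(x^+)$ and $(A)'(x^-)$ at each such vertex $x$ and show they are equal — so that $x$ is \emph{symmetrically} parabolic (multipliers both $1$) or symmetrically hyperbolic, never asymmetrically hyperbolic. Here I would use the piecewise-Bowen-Series description of Proposition~\ref{hbs_alternative_prop}: near a vertex $j$, on one side $A_{\Gamma,\mathrm{hBS}}$ agrees with a Bowen-Series map of one fundamental domain and on the other side with the Bowen-Series map of an adjacent (overlapping) domain, and the two relevant side-pairing elements are, up to the parabolic identifications at the cusp, conjugate — forcing equality of the one-sided derivatives. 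A clean way to package this is via the relation with $A_{\Gamma',\mathrm{BS}}^{2}$ together with Lemma~\ref{no_mixed} (no periodic break-point is of mixed type) and the symmetry of the configuration (the group $G_d$ is built symmetrically with respect to reflection in $\reals$, and each cusp looks the same from both sides): since the break-point is parabolic for the group, both one-sided multipliers must equal $1$. Once (i)–(iv) are in hand, Proposition~\ref{conformal_mating_general_prop} applied with this mateable $A$ and any $P\in\mathcal{H}_{(k-1)^2}$ yields the conformal mateability of $\widehat{A}_{\Gamma,\mathrm{hBS}}$ with $P$; and because $A_{\Gamma,\mathrm{cfm}}$ coincides with $A_{\Gamma,\mathrm{hBS}}$ as a circle map, the same conclusion holds for $\widehat{A}_{\Gamma,\mathrm{cfm}}$ (the canonical extensions differ only in their domains $\mathcal{D}$, not in the boundary dynamics that drives the mating), completing the proof.
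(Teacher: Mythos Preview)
Your proposal is correct and follows exactly the route the paper indicates: verify that $A_{\Gamma,\mathrm{hBS}}$ (and $A_{\Gamma,\mathrm{cfm}}$) satisfies the mateability conditions of Definition~\ref{def-mateable2}, then invoke Proposition~\ref{conformal_mating_general_prop}. The paper itself gives no further argument here beyond the sentence preceding the theorem (``higher Bowen-Series maps satisfy the conditions of Definition~\ref{def-mateable}, and hence can be conformally mated\ldots''), deferring details to \cite{mj-muk-1}; your outline of how to check items (i)--(iv) is consistent with that, and your identification of the periodic break-points as the parabolic ideal vertices $1,\dots,k$ (hence symmetrically parabolic) is the right computation for (iii). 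One small caution: when you appeal to Proposition~\ref{second_iterate_hbs_cor}, note that it realizes $A_{\Gamma',\mathrm{hBS}}$ as $A_{\Gamma,\mathrm{BS}}^{2}$ for an \emph{overgroup} $\Gamma\supset\Gamma'$ (possibly an orbifold group when $k$ is odd), so transferring expansivity/Markov this way requires tracking that correspondence; it is cleaner to read these properties directly off the explicit piece list in \S\ref{hdm_general_subsec}.
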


In light of Proposition~\ref{conformal_mating_general_prop}, we make the following definition.
\begin{defn}[\bf{Moduli space of matings}]\label{defn-moduli}
	The \emph{moduli space of matings} between a topological surface $\Sigma$ and complex polynomials in principal hyperbolic components consists of triples $(\Gamma, A_\Gamma, P)$, where 
	\begin{enumerate}
		\item $\Gamma$ is a Fuchsian group  uniformizing $\Sigma$,
		\item $A_\Gamma$ is a minimal mateable map orbit equivalent to $\Gamma$ on $\bS^1$, and
		\item $P$ is a polynomial in a principal hyperbolic component with $\mathrm{deg}(P)=\mathrm{deg}(A_\Gamma:\bS^1\to\bS^1)$.
	\end{enumerate}
\end{defn}
\noindent An immediate implication of Theorems~\ref{moduli_interior_mating_thm} and~\ref{thm-cfdmateable} is that the moduli space of matings between the topological surface $S_{0,k}$ ($k\geq 3$) and complex polynomials in principal hyperbolic components is disconnected. Specifically, it contains at least two components corresponding to
\begin{itemize}
\item Bowen-Series maps associated to groups in $\mathrm{Teich}(S_{0,k})$ and polynomials in $\mathcal{H}_{2k-3}$, and
\item higher Bowen-Series maps associated to groups in $\mathrm{Teich}(S_{0,k})$ and polynomials in $\mathcal{H}_{(k-1)^2}$.
\end{itemize}
We refer the readers to \cite[\S 6.4]{mj-muk-1} for further details.

Yet another application of orbit equivalence between higher Bowen-Series maps and Fuchsian punctured sphere groups is the failure of orbit equivalence rigidity for Fuchsian groups (see \cite{fisherwhyte_gafa} for general background on orbit equivalence rigidity and positive results, and \cite[\S 8]{mj-muk-1} for a precise statement of its failure in the Fuchsian case).

\subsection{Two non-examples}\label{ehbs_subsec}
In this subsection, we will consider two modifications of higher Bowen-Series maps and show that the resulting \pwfm maps are not orbit equivalent to the groups generated by their pieces.
\vspace{2mm}

\subsubsection{A non-example without folding}\label{noe_1_subsubsec} The following description of the higher Bowen-Series map $A_{\Gamma_0, \mathrm{hBS}}$ on $\bS^1$ is straightforward to check from its construction (see Subsection~\ref{sec-genfold}):

\begin{equation*}
	A_{\Gamma_0, \mathrm{hBS}}\ = \ \left\{\begin{array}{ll}
		A_{\Gamma_0, \mathrm{BS}}, & \mbox{on}\  \displaystyle \left(\bigcup_{i=1}^{k-1} \arc{\ i_- (i+1)_-\ }\right) \cup \left(\bigcup_{i=1}^{k-1}\bigcup_{j=i}^{i+k-2} \arc{\ \{i,j\} \{i,j+1\}\ } \right), \\
		A_{\Gamma_0, \mathrm{BS}}^{ 2}, & \mbox{otherwise},
	\end{array}\right. 
\end{equation*}
where $A_{\Gamma_0, \mathrm{BS}}$ denotes the Bowen-Series map of $\Gamma_0$ associated with the fundamental domain $W$.

In fact, the agreement of $A_{\Gamma_0, \mathrm{hBS}}$ and $A_{\Gamma_0, \mathrm{BS}}$ on the arcs $\arc{\ \{i,j\} \{i,j+1\}\ }$ ($i\in\{1,\cdots, k-1\}, j\in\{i,\cdots, i+k-2\}$) played an important role in the proof of orbit equivalence of $\Gamma_0$ and $A_{\Gamma_0, \mathrm{hBS}}$ (see Proposition~\ref{prop-cfdhd-orbeq}). However, if one replaces $A_{\Gamma_0, \mathrm{BS}}$ by $A_{\Gamma_0, \mathrm{BS}}^{ 2}$ on these arcs as well, the resulting minimal \pwfm map 
\begin{equation*}
	B := \ \left\{\begin{array}{ll}
		A_{\Gamma_0, \mathrm{BS}} & \mbox{on}\  \bS^1\cap\{z: \mathrm{Im}(z)\leq 0\}, \\
		A_{\Gamma_0, \mathrm{BS}}^{ 2} & \mbox{on}\  \bS^1\cap\{z: \mathrm{Im}(z)\geq 0\},
	\end{array}\right. 
\end{equation*}
is \emph{not} orbit equivalent to $\Gamma_0$. 

\begin{figure}[ht!]
\captionsetup{width=0.96\linewidth}
	\begin{tikzpicture}
		\node[anchor=south west,inner sep=0] at (0,0) {\includegraphics[width=0.45\linewidth]{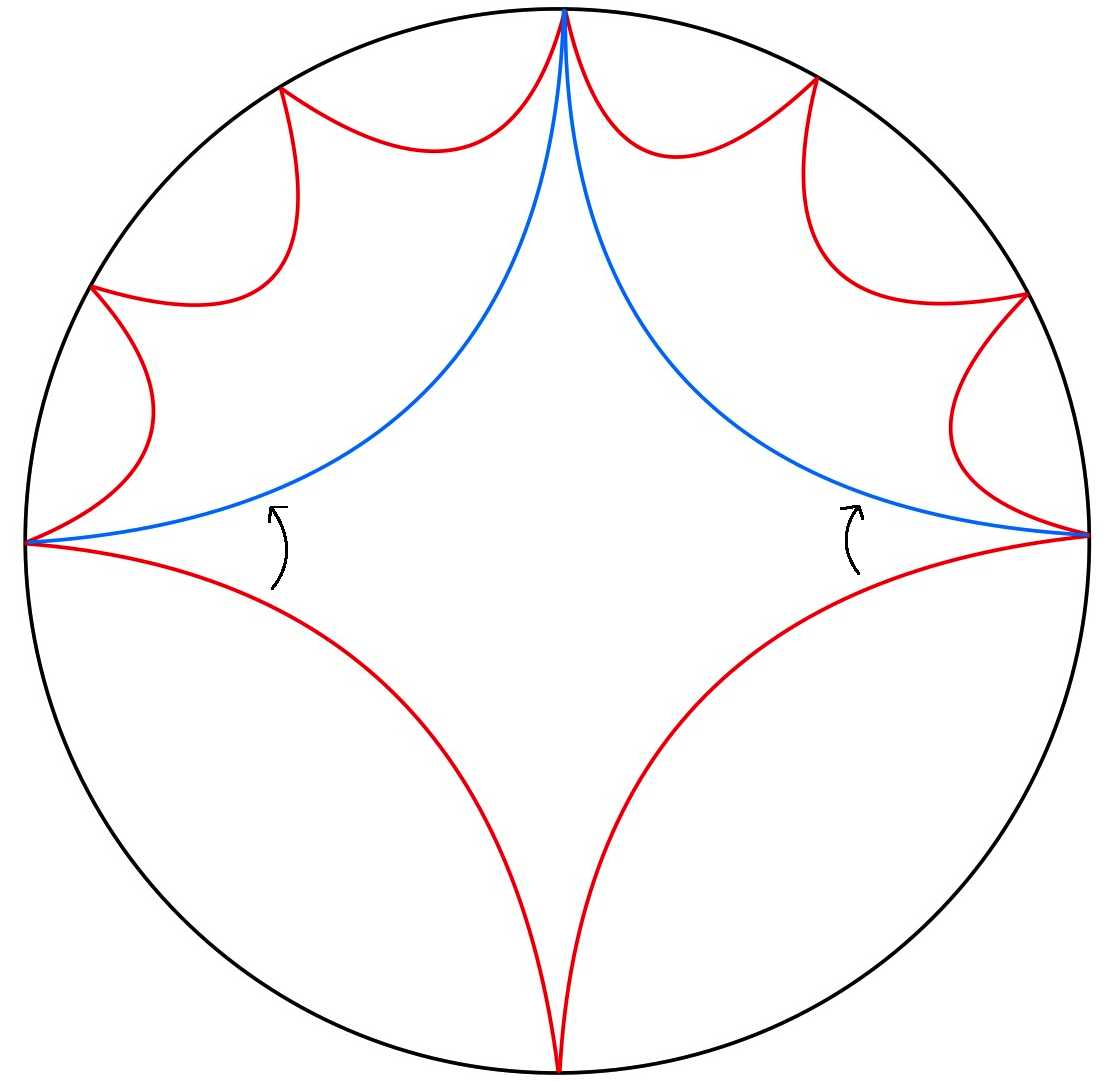}};
			\node[anchor=south west,inner sep=0] at (6.2,0) {\includegraphics[width=0.5\linewidth]{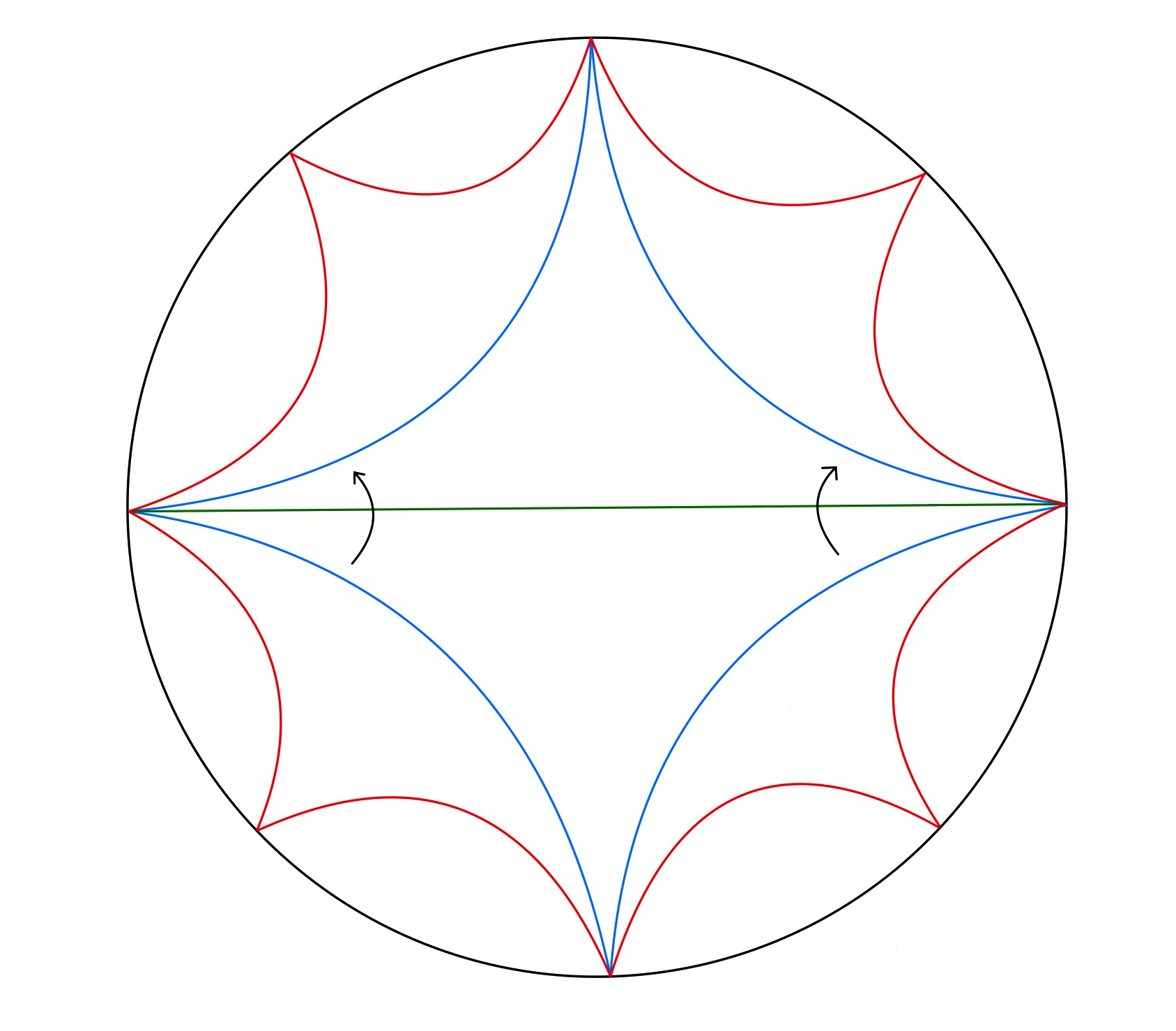}};

		\node at (9.5,3.3) {\begin{small}$W^u$\end{small}};
		\node at (9.5,2.2) {\begin{small}$W^l$\end{small}};
		\node at (10.48,2.58) {\begin{small}$g$\end{small}};	
		\node at (10.48,1.6) {\begin{tiny}$g^{-1}.W^u$\end{tiny}};	
		\node at (8.3,1.6) {\begin{tiny}$h^{-1}.W^u$\end{tiny}};	
		\node at (10.48,4) {\begin{tiny}$g.W^l$\end{tiny}};	
		\node at (8.4,4) {\begin{tiny}$h.W^l$\end{tiny}};	
		\node at (8.36,2.58) {\begin{small}$h$\end{small}};	
		
		\node at (6.75,2) {\begin{small}$h$\end{small}};	
		\node at (6.8,4) {\begin{small}$h^{-1}$\end{small}};	
		\node at (12.05,2) {\begin{small}$g$\end{small}};	
		\node at (12.1,4) {\begin{small}$g^{-1}$\end{small}};
		\node at (10.7,5.4) {\begin{tiny}$h\circ g^{-1}$\end{tiny}};
		\node at (8.2,5.4) {\begin{tiny}$g\circ h^{-1}$\end{tiny}};
		\node at (10.75,0.28) {\begin{tiny}$h^{-1}\circ g$\end{tiny}};
		\node at (8.2,0.28) {\begin{tiny}$g^{-1}\circ h$\end{tiny}};	
		
		\node at (4.15,2.7) {\begin{small}$g$\end{small}};	
		\node at (1.64,2.7) {\begin{small}$h$\end{small}};	
		\node at (0.32,1.1) {\begin{small}$h$\end{small}};	
		\node at (-0.1,3.5) {\begin{small}$h^{-2}$\end{small}};	
		\node at (5.28,1.1) {\begin{small}$g$\end{small}};	
		\node at (5.9,3.5) {\begin{small}$g^{-2}$\end{small}};
		\node at (3.8,5.6) {\begin{tiny}$h\circ g^{-1}$\end{tiny}};
		\node at (2.1,5.6) {\begin{tiny}$g\circ h^{-1}$\end{tiny}};
		\node at (5.6,4.7) {\begin{tiny}$h^{-1}\circ g^{-1}$\end{tiny}};
		\node at (0.2,4.7) {\begin{tiny}$g^{-1}\circ h^{-1}$\end{tiny}};
		\node at (2.9,2.7) {$W$};
		\node at (3.8,4) {\begin{tiny}$g.W$\end{tiny}};	
		\node at (1.9,4) {\begin{tiny}$h.W$\end{tiny}};	
		
	\end{tikzpicture}
	\caption{The two \pwfm maps $B$ (left) and $C$ (right) are depicted. They are not orbit equivalent to the Fuchsian groups generated by their pieces.}
	\label{noe_fig}
\end{figure}

\begin{prop}\cite[Proposition~4.9]{mj-muk-1}\label{not_oe_prop}
	The map $B:\bS^1\to\bS^1$ is not orbit equivalent to the Fuchsian group $\Gamma_0$ generated by its pieces.
\end{prop}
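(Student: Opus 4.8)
### Proof proposal

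The plan is to argue by contradiction, exploiting the fact that the map $B$ uses $A_{\Gamma_0,\mathrm{BS}}^{2}$ on the \emph{entire} upper semi-circle, including the short folding arcs $\arc{\{i,j\}\{i,j+1\}}$ on which the genuine higher Bowen-Series map agrees with $A_{\Gamma_0,\mathrm{BS}}$ (not its square). As noted in Subsection~\ref{noe_1_subsubsec}, this agreement on the short folding arcs is precisely what made the proof of orbit equivalence in Proposition~\ref{prop-cfdhd-orbeq} (and its generalization Proposition~\ref{prop-cfdhd-orbeq-gen}) go through; so the natural strategy is to locate a point on one of these short folding arcs whose $B$-grand orbit fails to exhaust its $\Gamma_0$-orbit.

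First I would recall that, since the pieces of $B$ lie in $\Gamma_0$, every $B$-grand orbit is automatically contained in a $\Gamma_0$-orbit; thus the only way orbit equivalence can fail is that some $\Gamma_0$-orbit properly contains a $B$-grand orbit. Next, I would pick a point $x$ in the interior of a short folding arc, say $x\in\Int\arc{\{i,j\}\{i,j+1\}}$ with $i\le j\le i+k-2$, and track both $\Gamma_0\cdot x$ and $\mathrm{GO}_B(x)$. The key computation is to identify which group element is needed to relate $x$ to a neighbouring representative in its $\Gamma_0$-orbit: on such an arc the correct ``linking'' element (the one realized by $A_{\Gamma_0,\mathrm{hBS}}$, equivalently by a single application of $A_{\Gamma_0,\mathrm{BS}}$) is some $g_i^{-1}$, whereas $B$ instead applies $A_{\Gamma_0,\mathrm{BS}}^{2}$, which on this arc is a \emph{different} group element (a product of two generators rather than one). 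I would then show that, because the break-point structure of $B$ on the upper semi-circle is strictly coarser than that of $A_{\Gamma_0,\mathrm{hBS}}$ (the short-arc break-points have been suppressed and replaced by the $A_{\Gamma_0,\mathrm{BS}}^{2}$ dynamics), the element $g_i^{-1}$ — or more precisely the coset relation it induces between $x$ and $g_i^{-1}.x$ — can never be recovered by any finite composition of forward branches of $B$ and their inverses. Concretely, one analyses the monoid generated (under the grand-orbit equivalence relation) by the pieces of $B$ and shows it is a proper subset of $\Gamma_0$ when restricted to the relevant orbit; the obstruction is an honest ``index-type'' phenomenon, mirroring the fact that $A_{\Gamma_0,\mathrm{BS}}^{2}$ is orbit equivalent only to an index-two subgroup (cf.\ Proposition~\ref{second_iterate_hbs_cor}), so globally replacing $A_{\Gamma_0,\mathrm{BS}}$ by its square on a $\Gamma_0$-invariant-enough portion destroys full orbit equivalence.

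The cleanest way to package this is to produce an explicit witness: find points $x,y$ in the same $\Gamma_0$-orbit (with $y=g_i.x$ for a suitable generator $g_i$ and a suitable short folding arc) such that no $B$-grand-orbit relation connects them. To verify that no such relation exists, I would use the topological Markov chain / labelled graph $\GG$ formalism set up after Definition~\ref{markov_def}: backward orbits under $B$ are encoded by geodesics in $\GG$ whose edge labels are the inverse pieces of $B$, so any grand-orbit identification $B^{m}(x)=B^{n}(y)$ forces a word in the pieces of $B$ equal to $g_i$ in $\Gamma_0$; a length/parity or combinatorial bookkeeping argument on these words — tracking how many times a generator versus a product-of-two-generators piece is used — shows $g_i$ is unattainable.

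The main obstacle, and the step requiring the most care, is exactly this last combinatorial verification that the witness pair $(x,y)$ genuinely lies in distinct $B$-grand orbits: ruling out \emph{all} finite compositions of forward branches and inverse branches is where one must be precise about the piecewise structure of $B$ on both semi-circles and about how the suppressed break-points change the set of achievable group elements. Everything else — containment of grand orbits in group orbits, and the setup of the witness — is routine given the machinery already developed in the excerpt.
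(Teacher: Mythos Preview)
Your strategy is much more elaborate than what is actually needed, and the crucial step is left as a promise rather than carried out. You correctly identify that the only possible failure is a $\Gamma_0$-orbit strictly containing a $B$-grand orbit, and you propose to exhibit a witness pair $(x,y=g_i.x)$ on a short folding arc. But the ``length/parity or combinatorial bookkeeping'' you sketch for showing that no relation $B^m(x)=B^n(y)$ exists cannot work in the naive form you describe: on the lower semi-circle the pieces of $B$ \emph{are} the single generators $g_i$, so there is no parity obstruction to realising $g_i$ as a word in the pieces. What would really be needed is a careful analysis of which branches are \emph{admissible} at each step given the Markov structure and the location of the iterates --- and you explicitly flag this as ``the main obstacle'' without doing it. As written, the proposal is a plan with the hard part missing.

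The paper's approach sidesteps all of this. As noted immediately after Proposition~\ref{not_oe_prop_2}, ``the orbit equivalence property is ruled out for the above two \pwfm\ maps simply by furnishing suitable fixed points of the maps.'' The argument (in the companion paper cited) is of the same shape as the four-line proof of Proposition~\ref{not_oe_prop_2}: one finds two \emph{distinct fixed points} of $B$ that happen to lie in the same $\Gamma_0$-orbit. Since for a fixed point $x$ of $B$ one has $B^m(x)=x$ for every $m$, another fixed point $y\neq x$ can never satisfy $B^n(y)=B^m(x)$; hence distinct fixed points lie in distinct $B$-grand orbits, and the contradiction is immediate. The point is that passing from $A_{\Gamma_0,\mathrm{BS}}$ to $A_{\Gamma_0,\mathrm{BS}}^{2}$ on the upper semi-circle creates \emph{new} fixed points (period-two points of the Bowen-Series map become fixed), and among these one finds points that are $\Gamma_0$-equivalent to other fixed points of $B$. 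This replaces your open-ended combinatorial verification by a one-line observation about fixed points, exactly as in the proof of Proposition~\ref{not_oe_prop_2}.
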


\begin{rmk}
The map $B$ has no diagonal fold, but it is not a \hdm. This is because some edges of the boundary of the fundamental domain of $B$ are not mapped to diagonals, and hence Condition~(3) of Definition~\ref{def-nofold} is violated (see Figure~\ref{noe_fig} (left)).
\end{rmk}

\subsubsection{A non-example with folding}\label{noe_2_subsubsec} Yet another example of a \pwfm map that is not orbit equivalent to the Fuchsian group generated by its pieces is given by the following symmetric version of higher Bowen-Series maps. For simplicity, we illustrate the thrice punctured sphere case. 

Consider the Fuchsian group $\Gamma_0$ of Section~\ref{sec-cfm}, the (closed) fundamental domain $W$ with vertices at the fourth roots of unity, and the generators $g, h$ that pair the sides of $W$ (as in Figure~\ref{cfd}). We set 
$$
W^u:=W\cap\{\mathrm{Im}(z)\geq 0\},\quad W^l:=W\cap\{\mathrm{Im}(z)\leq 0\}.
$$

The \pwfm map $C:\bS^1\to\bS^1$ depicted in Figure~\ref{noe_fig} (right) has degree $5$. It has diagonal folds in the sense of Definition~\ref{def-nofold}, but it is not a \cfm since Condition~(4) of Definition~\ref{def-cfm} fails. Moreover,

\begin{prop}\label{not_oe_prop_2}
	The \pwfm map $C:\bS^1\to\bS^1$ of Figure~\ref{noe_fig} (right) is not orbit equivalent to the Fuchsian group $\Gamma_0$ generated by its pieces.
\end{prop}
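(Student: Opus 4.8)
The plan is to exhibit a single pair of points $x,y\in\bS^1$ that lie in a common $\Gamma_0$-orbit but in distinct $C$-grand orbits; by the definition of orbit equivalence this already shows that $C$ is not orbit equivalent to $\Gamma_0$. Just as in the proof of Proposition~\ref{prop-cfdhd-orbeq}, every piece of $C$ lies in $\Gamma_0$, so $\mathrm{GO}_C(x)\subseteq\Gamma_0\cdot x$ for every $x\in\bS^1$; the whole point is to find an $x$ for which this inclusion is strict. The mechanism will be that $C$ has \emph{too many} fixed points on $\bS^1$: it fixes two distinct ideal points that $\Gamma_0$ identifies.

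The first step is to read off $C$ explicitly on $\bS^1$ from Figure~\ref{noe_fig} (right): its break-points (the ideal vertices of the four caps $g.W^l$, $h.W^l$, $g^{-1}.W^u$, $h^{-1}.W^u$ together with the four ideal vertices $1,2,3,6$ of $W$) and the piece of $C$ on each of the resulting arcs. The structural fact to extract is that, in contrast with the (asymmetric) higher Bowen-Series map --- whose inner domain is the ideal triangle on the vertices $1,3,6$, so that it fixes only those three vertices --- the symmetric construction makes the \emph{whole} ideal quadrilateral $W$ the inner domain of $C$. Concretely: the four edges $\overline{12},\overline{23},\overline{36},\overline{61}$ of $W$ are diagonals of the fundamental domain $R_C$ of $C$; the image $\widehat{C}(\alpha)$ of every edge $\alpha$ of $\partial R_C$ is one of these four diagonals; and since these diagonals close up into a loop (bounding $W$) rather than forming an open chain --- which is exactly why Condition~(4) of Definition~\ref{def-cfm} fails for $C$ --- all four vertices $1,2,3,6$ of $W$ are fixed by $C$. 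In practice one verifies this last assertion directly from the piece list: for each vertex $v\in\{1,2,3,6\}$, the two pieces of $C$ on the arcs of $\bS^1$ abutting $v$ are elements of $\Gamma_0$ lying in the parabolic stabiliser $\mathrm{Stab}_{\Gamma_0}(v)$, hence fix $v$.

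Next I would invoke the numerical mismatch between the four ideal vertices of $W$ and the three cusps of $S_{0,3}$: since each ideal vertex of $W$ is a parabolic fixed point of $\Gamma_0$, two of them must lie in a single $\Gamma_0$-orbit. With the labelling of Section~\ref{sec-cfm}, $h$ fixes $1$, $g$ fixes $3$, and both $g$ and $h$ carry $2$ to $6$ (this is why $g^{-1}h$ is parabolic, fixing $2$); so $6=h\cdot 2\in\Gamma_0\cdot 2$ while $2\neq 6$, and in fact $\{2,6\}$ is the pair of $\Gamma_0$-equivalent vertices. Now take $x=2$ and $y=6$. Both are fixed by $C$, so $C^{m}(x)=2$ and $C^{n}(y)=6$ for all $m,n\ge 0$; as $2\neq 6$, the forward $C$-orbits of $x$ and $y$ are disjoint, whence $y\notin\mathrm{GO}_C(x)$, while $y\in\Gamma_0\cdot x$. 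Thus $\mathrm{GO}_C(2)\subsetneq\Gamma_0\cdot 2$, and $C$ is not orbit equivalent to $\Gamma_0$. The general $S_{0,k}$ case ($k\ge 3$) would run in the same way, once one knows that the corresponding symmetric map fixes all $2k-2$ ideal vertices of $W$, of which only $k$ are pairwise $\Gamma_0$-inequivalent.

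The step I expect to be the genuine obstacle is the structural claim of the second paragraph --- that $C$ really does fix \emph{both} vertices $2$ and $6$, i.e.\ that the inner polygon attached to the fold structure of $C$ is all of $W$ rather than a proper sub-polygon. This is essential: if only one of $2,6$ were fixed by $C$, the argument would collapse, since the other vertex could perfectly well flow into it under iteration of $C$ and thereby lie in its grand orbit. Pinning the claim down is a finite but slightly delicate bookkeeping exercise --- for each ideal vertex of $W$ one must identify the two abutting pieces among the generators $g,h$ and their compositions $g^{\pm1}h^{\mp1}$ appearing in Figure~\ref{noe_fig}, and check that each of them lies in the stabiliser of that vertex. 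Everything else (the inclusion $\mathrm{GO}_C(x)\subseteq\Gamma_0\cdot x$, the pigeonhole count of cusps versus vertices, and the disjoint-forward-orbits argument) is immediate.
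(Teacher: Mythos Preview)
Your proposal is correct and follows exactly the paper's approach: the paper's two-line proof simply observes that $g(-i)=i$ places $\pm i$ in a single $\Gamma_0$-orbit, while both points are fixed by $C$ and hence lie in distinct $C$-grand orbits. Your pair $\{2,6\}$ (in the labelling of Section~\ref{sec-cfm}) is precisely $\{\pm i\}$, and the bookkeeping you flag as the only genuine obstacle --- checking that the pieces $h\circ g^{-1},\ g\circ h^{-1},\ h^{-1}\circ g,\ g^{-1}\circ h$ abutting $\pm i$ lie in their stabilisers --- is exactly what the paper leaves implicit in the phrase ``both these points are fixed by $C$''.
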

\begin{proof}
	Observe that $g(-i)=i$, and thus the points $\pm i$ lie in the same $\Gamma_0-$orbit. But both these points are fixed by $C$, and hence they cannot lie in the same grand orbit of $C$.
\end{proof}

The proofs of orbit equivalence for Bowen-Series and higher Bowen-Series maps only involve looking at the first iterates of the maps. On the other hand, the orbit equivalence property is ruled out for the above two \pwfm maps simply by furnishing suitable fixed points of the maps. In general, we ask the following question.

\begin{qn}\label{question_oe_test}
	Is there a general recipe to test whether a \pwfm map is orbit equivalent to the Fuchsian group generated by its pieces?
\end{qn}

\section{Invariant laminations and Bers boundary groups}

The existence of mateable maps orbit equivalent to Fuchsian punctured sphere groups leads one to the hunt for groups on boundaries of Teichm{\"u}ller spaces (of punctured spheres) that can be conformally mated with complex polynomials. Since Fuchsian realizations of Teichm{\"u}ller spaces are non-compact, the aforementioned pursuit ought to be carried out on boundaries of Bers slices of Fuchsian punctured sphere groups (note that such a realization of the Teichm{\"u}ller space is precompact in a suitable topology). As in the Fuchsian case, the first challenge one encounters in this program is to come up with the correct notion of `mateable maps' for Bers boundary groups.

Let us fix a Fuchsian punctured sphere group $\Gamma_0$ equipped with a (higher) Bowen-Series map $A_{\Gamma_0}$. We denote the Bers slice of $\Gamma_0$ by $\mathcal{B}(\Gamma_0)$. The map $A_{\Gamma_0}$ defines, for each (marked) group $\Gamma'$ in $\mathcal{B}(\Gamma_0)$, a piecewise M{\"o}bius Markov covering map $A_{\Gamma'}$ (via quasiconformal conjugation) of the limit set $\Lambda(\Gamma')$ such that $A_{\Gamma'}$ is orbit equivalent to $\Gamma'$ on $\Lambda(\Gamma')$. The map $A_{\Gamma'}$ is the (higher) Bowen-Series map associated with the map $\Gamma'$. Now let $\Gamma\in\partial\mathcal{B}(\Gamma_0)$.
Guided by the Fuchsian situation, we call a continuous self-map $A_\Gamma:\Lambda(\Gamma)\to\Lambda(\Gamma)$ the \emph{(higher) Bowen-Series map of $\Gamma\in\partial\mathcal{B}(\Gamma_0)$} if
\begin{enumerate}
	\item $A_\Gamma$ is orbit equivalent to $\Gamma$, and
	
	\item $A_\Gamma$ is the uniform limit of the (higher) Bowen-Series maps $A_{\Gamma'}$, as $\Gamma'\in\mathcal{B}(\Gamma_0)$ converges to $\Gamma$ in the strong topology.
\end{enumerate} 

The Bers density conjecture, now a theorem due to Brock-Canary-Minsky \cite{minsky-elc1,minsky-elc2} (see also \cite{brock-bromberg-density}) states that the Bers slice is dense in the space of all Kleinian surface groups with
one end carrying a fixed conformal structure. Thus, the closure of the Bers slice gives all such Kleinian surface groups. For any such group $\G$ on the Bers boundary, there  is an end-invariant
called the \emph{ending lamination}--a geodesic lamination supporting a transverse measure. Further, there exists a topological semiconjugacy \cite{mahan-split,mahan-kl}, 
called a \emph{Cannon-Thurston map} from the circle onto the limit set $\Lambda(\Gamma)$ of $\Gamma$. It was shown in \cite{mahan-elct,mahan-kl} that the Cannon-Thurston map identifies precisely
the end-points of the ending lamination.

It follows from  \cite{minsky-elc1,minsky-elc2} (see also \cite{brock-bromberg-density})  that any Bers boundary group is a strong limit of groups in the Bers slice.
Let $\Gamma_n=\phi_n\circ \Gamma_0\circ\phi_n^{-1}$ be a sequence of groups in $\mathcal{B}(\Gamma_0)$ (where $\phi_n$ is a quasiconformal homeomorphism inducing the representation $\Gamma_0\to\Gamma_n$) converging strongly to
$\Gamma \in \partial \BB(\Gamma_0)$.
By \cite{mahan-series1,mahan-series2}, \cite[Section~4.2]{mahan-motion-limit}, Cannon-Thurston maps of $\Gamma_n$ converge uniformly to the 
Cannon-Thurston map of $\Gamma$. Since the (higher) Bowen-Series map of $\Gamma_n$ is equal to $\phi_n\circ A_{\Gamma_0}\circ \phi_n^{-1}$, by definition, the (higher) Bowen-Series map $A_{\Gamma}$, \emph{if it exists,} must be given by $\phi_\infty\circ A_{\Gamma_0}\circ \phi_\infty^{-1}$, where $\phi_\infty:\bS^1\to\Lambda(\Gamma)$ is the Cannon-Thurston map of $\Gamma$. Thus, the (higher) Bowen-Series map $A_\Gamma:\Lambda(\Gamma)\to\Lambda(\Gamma)$, if it exists, must be semi-conjugate to the base (higher) Bowen-Series map $A_{\Gamma_0}:\bS^1\to\bS^1$ via the Cannon-Thurston map of $\Gamma$ (see \cite[\S 7.1]{mj-muk-1} for details):

\[ \begin{tikzcd}
	\mathbb{S}^1 \arrow{r}{A_{\Gamma_0}} \arrow[swap]{d}{\mathrm{C. T.}} & \mathbb{S}^1 \arrow{d}{\mathrm{C. T.}} \\
	\Lambda(\Gamma) \arrow[swap]{r}{A_{\Gamma}}& \Lambda(\Gamma)
\end{tikzcd}
\]

Put differently, in the light of \cite{mahan-elct}, the existence of a (higher) Bowen-Series map $A_\Gamma$ requires the ending lamination $\mathcal{L}$ of $\Gamma$ (where $\bS^1/\mathcal{L}\cong\Lambda(\Gamma)$) to be invariant under the action of $A_{\Gamma_0}$ (\cite[Lemma~7.3]{mj-muk-1}). On the other hand, such laminations are necessarily invariant under the action of $\Gamma_0$. Simultaneous invariance of $\mathcal{L}$ under the group $\Gamma_0$ and the covering map $A_{\Gamma_0}$ can be thought of as a compatibility condition between Kleinian group dynamics and polynomial dynamics, which turns out to be very restrictive.

\begin{theorem}[\bf{Bers boundary (higher) Bowen-Series maps are sparse}]\cite[Propositions 7.6, 7.8]{mj-muk-1}\label{thm_boundary_groups_1} 
	Let $\Gamma_0$ be a punctured sphere Fuchsian group. Then, there are only finitely many quasiconformal conjugacy classes of groups $\Gamma\in\partial\mathcal{B}(\Gamma_0)$ for which the Cannon-Thurston map of $\Gamma$ semi-conjugates the (higher) Bowen-Series map of $\Gamma_0$ to a self-map $A_\Gamma$ of $\Lambda(\Gamma)$ that is orbit equivalent to $\Gamma$. These Kleinian groups arise out of pinching finitely many disjoint, simple, closed curves (on the surface $\disk/\Gamma_0$) out of an explicit finite list. In particular, all such groups $\Gamma$ are geometrically finite.
\end{theorem}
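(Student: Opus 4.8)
## Proof Plan for Theorem \ref{thm_boundary_groups_1}

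The plan is to translate the problem into a finite combinatorial question about invariant geodesic laminations. By the discussion preceding the theorem (and \cite[Lemma~7.3]{mj-muk-1}), if $\Gamma \in \partial\mathcal{B}(\Gamma_0)$ admits a Bowen-Series map $A_\Gamma$ orbit equivalent to $\Gamma$ and semi-conjugate to $A_{\Gamma_0}$ via the Cannon-Thurston map, then the ending lamination $\mathcal{L}$ of $\Gamma$ (for which $\bS^1/\mathcal{L} \cong \Lambda(\Gamma)$, by \cite{mahan-elct}) must be simultaneously invariant under $\Gamma_0$ and under $A_{\Gamma_0}$. So the first step is to record this reduction precisely: it suffices to show that there are only finitely many $\Gamma_0$-invariant, $A_{\Gamma_0}$-invariant geodesic laminations on $\disk/\Gamma_0$ up to the natural equivalence, and that each is a \emph{rational} lamination, i.e., a finite union of $\Gamma_0$-orbits of simple closed geodesics (equivalently, diagonals of the fundamental domain joining distinct cusps).

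The second step is to exploit the piecewise-Möbius Markov structure of $A_{\Gamma_0}$ to constrain $\mathcal{L}$. A leaf of $\mathcal{L}$ is a bi-infinite geodesic with endpoints $a,b \in \bS^1$, and $A_{\Gamma_0}$-invariance means $\widehat{A}_{\Gamma_0}$ maps this leaf to another leaf (or to a point, if $a,b$ land on the same point of $\Lambda(\Gamma)$). Since $A_{\Gamma_0}$ is expansive and Markov with the pieces $g_j \in \Gamma_0$ as branches, forward iteration of a leaf under $A_{\Gamma_0}$ either eventually has an endpoint at a parabolic fixed point (a cusp, i.e., one of the ideal vertices $p_i$ of the fundamental domain $R$) or else the leaf length — measured combinatorially along the Markov graph $\GG$ — cannot grow without bound because the lamination is closed and $\Gamma_0$-invariant with only finitely many orbits of leaves entering the fundamental domain. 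The key point is that $A_{\Gamma_0}$ permutes the break-points and fixes the cusps (this is built into the structure of Bowen-Series and higher Bowen-Series maps), so the ideal endpoints of leaves of an invariant lamination are forced to be preimages under the group of the finitely many cusps. This forces $\mathcal{L}$ to be supported on arcs joining cusps, hence a rational/parabolic lamination.

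The third step is the finiteness count. A $\Gamma_0$-invariant lamination supported on cusp-to-cusp geodesics is determined by a finite collection of disjoint simple closed curves on the finite-type surface $\disk/\Gamma_0$ (a multicurve); there are only finitely many multicurves up to the mapping class group, but here we need finiteness on the nose (not up to mapping class group), which comes from the additional rigidity of requiring $A_{\Gamma_0}$-invariance — only those multicurves whose associated diagonals are permuted by $\widehat{A}_{\Gamma_0}$ survive, and this is a finite check on the finitely many diagonals of the fundamental domain $R$ of $A_{\Gamma_0}$. Since pinching such a multicurve produces a geometrically finite group on $\partial\mathcal{B}(\Gamma_0)$ (by the theory of \cite{minsky-elc1,minsky-elc2}), and each geometrically finite Bers-boundary group is determined up to quasiconformal conjugacy by its pinched multicurve, we conclude there are finitely many such $\Gamma$, all geometrically finite, arising from an explicit finite list of curve systems.

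The main obstacle I expect is the second step: showing rigorously that $A_{\Gamma_0}$-invariance forces the lamination to be \emph{rational} rather than irrational (minimal) — i.e., ruling out $A_{\Gamma_0}$-invariant laminations with a minimal component. The heuristic is that a minimal lamination has leaves that are ``dense'' and recur, which is incompatible with the hyperbolic expansion of $A_{\Gamma_0}$ on $\bS^1$ away from parabolic points: expansion would stretch leaves, but the lamination being closed and having finitely many orbits of leaves crossing $R$ caps their complexity, and the only fixed points of the expansion that can absorb this are the parabolic cusps. Making this dichotomy precise — likely via a length/complexity function on leaves, an analysis of the transition matrix of $A_{\Gamma_0}$, and the no-mixed-type and symmetric-hyperbolicity constraints (Lemma \ref{no_mixed}, Definition \ref{def-mateable}(\ref{asym_hyp})) on the break-points — is the technical heart of the argument, and is presumably where \cite[\S 7]{mj-muk-1} does its real work.
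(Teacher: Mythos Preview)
Your reduction (Step 1) is exactly right and matches the paper's discussion preceding the theorem: the question reduces to classifying geodesic laminations on $\disk/\Gamma_0$ that are simultaneously $\Gamma_0$-invariant and $A_{\Gamma_0}$-invariant. However, your Step 2 contains a conceptual error that would derail the argument as written. The leaves of the ending lamination $\mathcal{L}$ are lifts of simple closed geodesics on the punctured sphere $\disk/\Gamma_0$, so their ideal endpoints in $\bS^1$ are \emph{hyperbolic} fixed points of the corresponding group elements, not parabolic ones. The remark immediately following the theorem makes this explicit: the allowable curves are axes of the specific hyperbolic elements in $\mathbf{S}_d=\{g_2,\dots,g_{d-1}\} \cup \{g_i^{-1} g_j : i-j>1\}$, not arcs between cusps. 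Your claim that ``the ideal endpoints of leaves of an invariant lamination are forced to be preimages under the group of the finitely many cusps'' and your parenthetical ``(equivalently, diagonals of the fundamental domain joining distinct cusps)'' are therefore incorrect as stated.

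The paper is a survey and does not give a full proof here, citing \cite[Propositions~7.6,~7.8]{mj-muk-1}; but the explicit list in the remark indicates that the actual mechanism is more direct and combinatorial than your expansion/dichotomy heuristic. One analyzes which bi-infinite geodesics $\overline{ab}$ in $\disk$ are carried to leaves by the canonical extension $\widehat{A}_{\Gamma_0}$: if the endpoints $a,b$ lie in Markov pieces $I_j, I_l$, then $\widehat{A}_{\Gamma_0}$ acts on them by (possibly different) group elements $g_j, g_l$, and the leaf is preserved as a geodesic only when these actions are compatible. Iterating and imposing $\Gamma_0$-invariance reduces this to a finite check against the Markov structure, yielding the explicit finite list $\mathbf{S}_d$ of hyperbolic conjugacy classes. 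Your Step 3 is then correct: each multicurve drawn from $\mathbf{S}_d$ determines a geometrically finite cusp group on $\partial\mathcal{B}(\Gamma_0)$, and there are finitely many. So the architecture of your plan is sound, but the heart of Step 2 should be replaced by this direct combinatorial analysis of $\widehat{A}_{\Gamma_0}$-invariant axes rather than an argument forcing endpoints to cusps.
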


\begin{rmk}
Consider the Bowen-Series map $A_{G_d}$ associated with the Fuchsian group $G_d$ equipped with the preferred fundamental domain $R$ given by the ideal polygon with vertices at the $2d$-th roots of unity (see Subsection~\ref{b_s_punc_sphere_subsec}). In this case, the explicit finite list of Theorem~\ref{thm_boundary_groups_1} is 
$$
\mathbf{S}_d:=\{g_2,\cdots,g_{d-1}\}\cup\{g_i^{-1}\circ g_j: i, j\in\{1,\cdots, d\},\  i-j >1\}
$$
(see \cite[Proposition~7.6]{mj-muk-1}), and hence every $A_{G_d}-$invariant geodesic lamination on $\disk/G_d\cong S_{0,d+1}$ is a subset of $S_d$. For $d=3$, this gives exactly two invariant laminations: $\{\{g_2\}, \{g_3^{-1}\circ g_1\}\}$. The curve corresponding to $g_2$ (respectively, $g_3^{-1}\circ g_1$) on the four times punctured sphere depicted in Figure~\ref{fund_dom_punctured_sphere_fig} is the `vertical' (respectively, `horizontal') curve which separates the punctures $[p_3], [p_4]$ from $[p_1], [p_2]$ (respectively, $[p_1], [p_4]$ from $[p_2], [p_3]$.
\end{rmk}

The (higher) Bowen-Series map of a Bers boundary group (when it exists) is piecewise M{\"o}bius and hence admits a canonical extension $\widehat{A}_\Gamma$ to a subset of the \emph{filled limit set} $K(\Gamma)$ of the group (i.e., the complement of the completely invariant component of its domain of discontinuity). 

Now let $P$ be a complex polynomial in the principal hyperbolic component $\mathcal{H}_k$, where $k=\deg\{A_\Gamma:\Lambda(\Gamma)\longrightarrow\Lambda(\Gamma)\}$. Then, the action of $P$ on its Julia set $\mathcal{J}(P)$ is topologically conjugate to $z^k\vert_{\bS^1}$. On the other hand, $A_\Gamma\vert_{\Lambda(\Gamma)}$ is a factor of $A_{\Gamma_0}\vert_{\bS^1}$, which is in turn topologically conjugate to $z^k\vert_{\bS^1}$. One can now glue the filled Julia set $\mathcal{K}(P)$ (which is a closed Jordan disk) outside the filled limit set $K(\Gamma)$ using a semi-conjugacy between $P\vert_{\mathcal{J}(P)}$ and $A_\Gamma\vert_{\Lambda(\Gamma)}$, and this produces a topological $2$-sphere. Moreover, the existence of this semi-conjugacy implies that the action of $P$ on $\mathcal{K}(P)$ and the action of $\widehat{A}_\Gamma$ on a subset of $K(\Gamma)$ paste together to yield a continuous map on the copy of $\mathbb{S}^2$ just defined. This map is called the \emph{topological mating} of $\widehat{A}_\Gamma$ and $P$. 
We say that the canonical extension $\widehat{A}_\Gamma$ of the (higher) Bowen-Series map of a Bers boundary group is \emph{conformally mateable} with a polynomial $P$ in the principal hyperbolic component $\mathcal{H}_k$ if the above topological $2$-sphere admits a complex structure that turns the topological mating into a holomorphic map (cf. \cite[\S 7.5]{mj-muk-1}).

A sophisticated surgery procedure involving David homeomorphisms yields the following conformal mateability theorem.

\begin{theorem}[\bf{Bers boundary (higher) Bowen-Series maps are mateable}]\cite[Theorem 7.19]{mj-muk-1}\label{thm_boundary_groups_2}
	Let $\Gamma\in\partial\mathcal{B}(\Gamma_0)$ be a group that admits a (higher) Bowen-Series map $A_\Gamma$. Then the canonical extension $\widehat{A}_\Gamma$ can be conformally mated with polynomials lying in the principal hyperbolic component $\mathcal{H}_k$, where $k=\deg\{A_\Gamma:\Lambda(\Gamma)\longrightarrow\Lambda(\Gamma)\}$.

\end{theorem}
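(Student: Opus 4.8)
The plan is to reduce the statement to the already-established Fuchsian mateability results (Theorems~\ref{moduli_interior_mating_thm} and~\ref{thm-cfdmateable}) by transporting the mating structure along the Cannon-Thurston map, and then to repair the loss of quasiconformality at the newly created parabolics using a David surgery. First I would set up the combinatorial model. By Theorem~\ref{thm_boundary_groups_1}, a Bers boundary group $\Gamma$ admitting a (higher) Bowen-Series map $A_\Gamma$ is obtained from $\Gamma_0$ by pinching a finite, explicit collection of disjoint simple closed curves that is simultaneously invariant under $\Gamma_0$ and under $A_{\Gamma_0}$. The Cannon-Thurston map $\phi_\infty\colon\bS^1\to\Lambda(\Gamma)$ is the quotient by the ending lamination $\LL$ generated by these curves, and by \cite{mahan-elct} it collapses exactly the pairs of endpoints of leaves of $\LL$; the invariance of $\LL$ under $A_{\Gamma_0}$ is precisely what makes the square
\[ \begin{tikzcd}
	\mathbb{S}^1 \arrow{r}{A_{\Gamma_0}} \arrow[swap]{d}{\phi_\infty} & \mathbb{S}^1 \arrow{d}{\phi_\infty} \\
	\Lambda(\Gamma) \arrow[swap]{r}{A_{\Gamma}}& \Lambda(\Gamma)
\end{tikzcd} \]
commute, so that $A_\Gamma$ is a topological factor of $A_{\Gamma_0}$, and hence still topologically conjugate to $z^k\vert_{\bS^1}$ with $k=\deg A_{\Gamma_0}$.

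Next I would build the topological mating and promote it to a holomorphic one. Since $P\in\mathcal{H}_k$, the action of $P$ on $\mathcal{J}(P)$ is topologically conjugate to $z^k\vert_{\bS^1}$, hence semi-conjugate (through $\phi_\infty$ composed with the conjugacy) to $A_\Gamma\vert_{\Lambda(\Gamma)}$; gluing $\mathcal{K}(P)$ outside the filled limit set $K(\Gamma)$ along this semi-conjugacy produces a topological $2$-sphere on which $\widehat A_\Gamma$ and $P$ paste to a continuous map — the topological mating, exactly as described in the paragraph preceding the theorem. To endow this $\bS^2$ with a complex structure: I would first perform the corresponding conformal mating in the Fuchsian (interior of Bers slice) setting, where Theorems~\ref{moduli_interior_mating_thm}/\ref{thm-cfdmateable} give a holomorphic model $F_0$ on the sphere together with conformal uniformizations of $\widehat A_{\Gamma_0}$-side and the $P$-side. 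Then I would pull back the standard complex structure on the $P$-side and on the ``group side'' of the boundary mating, and use the David integrability theorem (as in the remark after Proposition~\ref{conformal_mating_general_prop}) to integrate the resulting Beltrami coefficient. The point of using David rather than measurable-Riemann-mapping-theorem integrability is that at each pinched curve the conjugacy between $z^k$ and $A_\Gamma$ degenerates in quasisymmetry exactly at the parabolic fixed points created by the pinching, and one needs the David extension theorem for circle homeomorphisms (the analogue of Ahlfors-Beurling used in \cite{mj-muk-1}) to produce a David homeomorphism of the disk conjugating $z^k\vert_{\bS^1}$ to the boundary dynamics, whose dilatation blows up only on a set of controlled area near the parabolic points.

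The main obstacle, and where the bulk of the work lies, is verifying that the gluing homeomorphism along $\mathfrak{J}$ — built from the semi-conjugacy between $P\vert_{\mathcal J(P)}$ and $A_\Gamma\vert_{\Lambda(\Gamma)}$ — is a David homeomorphism near the (finitely many, by Theorem~\ref{thm_boundary_groups_1}) parabolic cusps of $\Gamma$, and that the David property is preserved under the welding/pasting. This requires a local model of the boundary (higher) Bowen-Series map near a cusp: the relevant branch of $A_\Gamma$ near a parabolic fixed point $p$ is M\"obius with derivative $1$ at $p$, and one must show the conjugacy to $z^k$ has the precise modulus of continuity (an exponential-type ``$\exp(-1/t)$'' control on the local dilatation) that feeds the area estimate $\int \exp(\varepsilon K(z))\,dz<\infty$ in the David integrability theorem. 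I would isolate this as a local lemma — modeled on the cusp analysis in \cite{LMMN} and \cite[\S 7]{mj-muk-1} — and then assemble the global complex structure by the standard argument: integrate, obtain a quasiconformal-up-to-David identification of the welded sphere with $\widehat{\C}$, and conclude that the topological mating becomes a holomorphic map of $\widehat{\C}$, i.e., that $\widehat A_\Gamma$ and $P$ are conformally mateable. The remaining points — independence of the construction from the auxiliary choices up to M\"obius conjugacy, and continuity of the resulting holomorphic map — are routine once the David surgery is in place.
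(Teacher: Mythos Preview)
The paper is expository and does not actually prove this theorem; it cites \cite[Theorem~7.19]{mj-muk-1} and summarizes the method in one line as ``a sophisticated surgery procedure involving David homeomorphisms.'' So there is no detailed argument here to compare against, and your proposal is consistent with that summary in that it correctly isolates the David integrability theorem and the David extension result of \cite{LMMN} as the analytic engine, and correctly locates the crux in a local model at the new parabolic cusps.

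That said, two points in your outline need attention. First, a slip: you write that $A_\Gamma$ is ``still topologically conjugate to $z^k\vert_{\bS^1}$,'' but once curves have been pinched the limit set $\Lambda(\Gamma)\cong\bS^1/\mathcal L$ is not a Jordan curve, so $A_\Gamma\vert_{\Lambda(\Gamma)}$ is only a \emph{factor} of $z^k\vert_{\bS^1}$ (as you yourself use in the very next sentence); this matters because the welding curve in the boundary mating has infinitely many cut points. Second, and more substantively, your strategy of building the complex structure by ``transporting'' the Fuchsian conformal mating $F_0$ along the Cannon--Thurston map does not go through as written: the Fuchsian mating sphere is $\overline{\disk}\cup_{\bS^1}\mathcal K(P)$ welded along a Jordan curve, whereas the boundary mating sphere is $K(\Gamma)\cup_{\Lambda(\Gamma)}\mathcal K(P)$; these are different topological gluings, and a David Beltrami coefficient on the former does not descend to the latter under the quotient by $\mathcal L$. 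One must instead build the David structure directly on a model sphere whose invariant curve already has the topology of $\Lambda(\Gamma)$ --- this is where Theorem~\ref{thm_boundary_groups_3} enters, providing a genuine polynomial $P_\Gamma$ with $\mathcal J(P_\Gamma)\cong\Lambda(\Gamma)$ on whose dynamical plane the surgery can be performed. Your local lemma at the cusps is indeed the heart of the matter, but it feeds a different global assembly than the pull-back-from-$F_0$ scheme you sketch.
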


For a group $\Gamma\in\partial\mathcal{B}(\Gamma_0)$ admitting a (higher) Bowen-Series map $A_\Gamma$, the corresponding geodesic lamination $\mathcal{L}$ is invariant under the base (higher) Bowen-Series map $A_{\Gamma_0}$. The associated equivalence relation $\mathcal{L}$ on $\bS^1$ satisfies the following properties.
\begin{enumerate}\upshape
\item $\mathcal{L}$ is closed in $\R/\Z\times\R/\Z$.
\item Each equivalence class $X$ of $\mathcal{L}$ is a finite subset of $\R/\Z$.
\item $\mathcal{L}-$equivalence classes are pairwise \emph{unlinked}; i.e., if $X$ and $Y$ are two distinct equivalence classes of $\mathcal{L}$, then there exist disjoint intervals $I_X, I_X\subset\R/\Z$ such that $X\subset I_X$ and $Y\subset I_Y$.
\item If $X$ is an $\mathcal{L}-$equivalence class, then $A_{\Gamma_0}(X)$ is also an $\mathcal{L}$-equivalence class.
\item If $X$ is an $\mathcal{L}-$equivalence class, then $X\mapsto A_{\Gamma_0}(X)$ is a cyclic order preserving bijection.
\end{enumerate}
On the other hand, the lamination associated with a complex polynomial $P$ with connected Julia set also enjoys analogues of the properties listed above (where the role of $A_{\Gamma_0}$ is played by the base polynomial $z^d$). Roughly speaking, the lamination associated with $P$ is a $z^d-$invariant closed equivalence relation on $\bS^1$ such that the quotient of $\bS^1$ by the equivalence relation yields a topological model of the Julia set of $P$ (cf. \cite{kiwi1}). Remarkably, the topological conjugacy between $A_{\Gamma_0}\vert_{\bS^1}$ and $z^d\vert_{\bS^1}$ (for some $d\geq 2$) provides us with a tool to pass from laminations in the group world to those in the polynomial world. This combinatorial link allows one to invoke standard realization results from polynomial dynamics and conclude that the limit set $\Lambda(\Gamma)$ is indeed homeomorphic to the Julia set of a complex polynomial in a `dynamically natural' way.

\begin{theorem}[\bf{Equivariant homeomorphism between limit and Julia set}]\cite[Theorem 7.16]{mj-muk-1}\label{thm_boundary_groups_3}
	Let $\Gamma\in\partial\mathcal{B}(\Gamma_0)$ be a group that admits a (higher) Bowen-Series map $A_\Gamma$. Then there exists a complex polynomial $P_\Gamma$ (of degree equal to that of $A_\Gamma:\Lambda(\Gamma)\longrightarrow\Lambda(\Gamma)$) such that the action of $A_\Gamma$ on the limit set $\Lambda(\Gamma)$ is topologically conjugate to the action of $P_\Gamma$ on its Julia set.
\end{theorem}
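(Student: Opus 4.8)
The plan is to build the polynomial $P_\Gamma$ not by any direct analytic construction but by translating the combinatorics of the $A_{\Gamma_0}$-invariant lamination $\mathcal{L}$ associated with $\Gamma$ into a $z^d$-invariant lamination in the sense of Thurston--Kiwi, and then invoking a realization theorem from polynomial dynamics. The conceptual engine is the topological conjugacy $\psi : \bS^1 \to \bS^1$ between $A_{\Gamma_0}\vert_{\bS^1}$ and $z^d\vert_{\bS^1}$ guaranteed by Propositions \ref{b_s_poly_conjugate_prop_1} and \ref{second_iterate_hbs_cor} (here $d = 2\mathrm{rank}-1$ or $(k-1)^2$ as appropriate). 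First I would push the equivalence relation $\mathcal{L}$ on $\bS^1$ forward under $\psi$ to obtain an equivalence relation $\mathcal{L}' := (\psi\times\psi)(\mathcal{L})$. The five properties listed just before Theorem \ref{thm_boundary_groups_3}---closedness in $\R/\Z\times\R/\Z$, finiteness of classes, pairwise unlinkedness, forward-invariance of classes, and cyclic-order-preservation of the induced map on classes---are all purely topological/combinatorial and hence transported verbatim by the conjugacy $\psi$, now with the role of $A_{\Gamma_0}$ played by $z^d$. Thus $\mathcal{L}'$ is a $z^d$-invariant lamination.

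**Next I would** feed $\mathcal{L}'$ into the standard realization machinery for invariant laminations (Kiwi \cite{kiwi1}, building on Thurston): a closed, unlinked, forward-invariant equivalence relation on $\bS^1$ whose classes are finite, satisfying the natural gap/degree conditions, is realized by the landing pattern of external rays of a postcritically-finite-like polynomial $P_\Gamma$ of degree $d$, so that $\bS^1/\mathcal{L}' \cong \mathcal{J}(P_\Gamma)$ equivariantly with respect to $z^d$ and $P_\Gamma$. One should check the hypotheses of the realization theorem are met: in our situation the classes of $\mathcal{L}'$ come from pinching a finite set of disjoint simple closed curves (Theorem \ref{thm_boundary_groups_1}), so the lamination is a "finite" one corresponding to a geometrically finite/parabolic polynomial, and the required combinatorial admissibility (no "rotation" obstructions, correct counting of gaps mapping over gaps) follows from properties (4)--(5) together with the fact that $A_{\Gamma_0}$ is an honest degree-$d$ covering. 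On the group side, \cite{mahan-elct} identifies $\Lambda(\Gamma)$ with $\bS^1/\mathcal{L}$ via the Cannon--Thurston map, and this identification is equivariant for $A_\Gamma$ (by the very definition of $A_\Gamma$ as the map induced through the Cannon--Thurston semiconjugacy, using the displayed commuting square). Composing the three identifications
\[
(\Lambda(\Gamma), A_\Gamma) \;\cong\; (\bS^1/\mathcal{L}, \overline{A_{\Gamma_0}}) \;\xrightarrow{\ \overline{\psi}\ }\; (\bS^1/\mathcal{L}', \overline{z^d}) \;\cong\; (\mathcal{J}(P_\Gamma), P_\Gamma)
\]
yields the desired topological conjugacy; each arrow is a homeomorphism intertwining the dynamics, so the composite is too, and the degree of $P_\Gamma$ equals $d = \deg(A_\Gamma)$ by construction.

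**The hard part will be** verifying that $\mathcal{L}'$ genuinely satisfies the admissibility hypotheses of a realization theorem for polynomial laminations---in particular that the quotient $\bS^1/\mathcal{L}'$ is the Julia set of an \emph{honest} polynomial and not merely an abstract dendrite or that the induced map has the right local degree at the (images of) critical gaps. This is where the precise structure of the $A_{\Gamma_0}$-invariant laminations from Theorem \ref{thm_boundary_groups_1}---that they arise from pinching curves out of the explicit finite list $\mathbf{S}_d$, so that the "critical" behaviour of $A_{\Gamma_0}$ is understood and transported faithfully---does the real work; one must confirm that the major gaps of $\mathcal{L}$ under $A_{\Gamma_0}$ map onto the critical-value gaps with the correct multiplicities, and that these are preserved under $\psi$. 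A secondary technical point is ensuring the equivalence-relation quotient $\overline{\psi} : \bS^1/\mathcal{L} \to \bS^1/\mathcal{L}'$ is well-defined and a homeomorphism, which is immediate once $\mathcal{L}' = (\psi\times\psi)(\mathcal{L})$ is known to be closed. I would organize the write-up by first recording the five properties of $\mathcal{L}$ (already listed in the excerpt), then the transport lemma to $\mathcal{L}'$, then the citation of the realization theorem with a careful statement of its hypotheses and a verification that the pinched-curve laminations satisfy them, and finally the composition of conjugacies above.
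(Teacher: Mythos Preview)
Your proposal is correct and follows essentially the same route the paper sketches: transport the $A_{\Gamma_0}$-invariant lamination $\mathcal{L}$ to a $z^d$-invariant lamination via the topological conjugacy between $A_{\Gamma_0}\vert_{\bS^1}$ and $z^d\vert_{\bS^1}$, then invoke Kiwi's realization theorem \cite{kiwi1} to produce $P_\Gamma$, and finally chain together the Cannon--Thurston identification $\Lambda(\Gamma)\cong\bS^1/\mathcal{L}$ with the induced homeomorphism $\bS^1/\mathcal{L}\cong\bS^1/\mathcal{L}'\cong\mathcal{J}(P_\Gamma)$. You have also correctly located the genuine technical content in verifying the admissibility hypotheses of the realization theorem, which the paper likewise flags as the point where the explicit structure of the pinched-curve laminations from Theorem~\ref{thm_boundary_groups_1} is used.
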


\section{Measures of maximal entropy and Patterson-Sullivan measures}\label{sec_meas_th}

In this section, we study the measure-theoretic dynamics of Bowen-Series and higher Bowen-Series maps associated with Fuchsian punctured sphere groups, thus linking this theme to another seminal piece of work by Sullivan--the Patterson-Sullivan measure. Specifically, we show that measures of maximal entropy of (higher) Bowen-Series maps acting on the circle are push-forwards of appropriate Patterson-Sullivan measures supported on Gromov boundaries of free groups.

Informally speaking, the Sullivan-Patterson measure on the Gromov boundary of a group is the weak limit of a sequence of atomic measures supported on the words of length $n$, appropriately weighted by the distances of the group elements from a fixed base point (for the free group with the standard generating set, the sequence reduces to Formula~\eqref{ps_formula}). Although we will not use the general theory of Patterson-Sullivan measures, we encourage the reader to consult \cite{patterson,sullivan_cm,coor} for the construction and basic properties of these measures in the context of Fuchsian groups, Kleinian groups, and hyperbolic groups, respectively. For background on symbolic dynamics and topological/measure-theoretic entropy, we refer the reader to \cite{Walters_book,brin-stuck}. 

\subsection{Maximal entropy measure for Bowen-Series maps}\label{sec_meas_th_bs}
For definiteness, let us fix the Fuchsian $(d+1)-$times punctured sphere group $\Gamma_0=G_d$ of Section~\ref{b_s_punc_sphere_subsec} and the  fundamental domain $R$ given by the ideal polygon with vertices at the $2d-$th roots of unity ($d\geq 2$). Further let $A\equiv A_{\Gamma_0, \textrm{BS}}:\bS^1\to\bS^1$ be the Bowen-Series map of $\Gamma_0$ associated with the fundamental domain $R$. 

The topological entropy of a dynamical system is a numerical topological conjugacy invariant that measures the complexity of the system. Roughly, it represents the exponential growth rate of the number of essentially different orbit segments of length $n$. Since $A$ is topologically conjugate to $z^{2d-1}$, the topological entropy of the $A-$action on $\bS^1$ is equal to $\ln(2d-1)$. We are interested in studying the measure of maximal entropy (MME for short) for $A$; i.e., the unique $A-$invariant measure on $\bS^1$ whose measure-theoretic entropy is equal to the topological entropy $\ln(2d-1)$ (see \cite{AKU1,AKU2} for computation of topological entropy of Bowen-Series maps associated with cocompact Fuchsian groups and results regarding their measures of maximal entropy).

\subsubsection{MME of $A$ in terms of topological dynamics}

By Proposition~\ref{b_s_poly_conjugate_prop_1}, there exists a homeomorphism 
$$\phi:\bS^1\to\bS^1
$$ 
that conjugates $p:z\mapsto z^{2d-1}$ to $A$ (this homeomorphism can be thought of as a generalization of the Minkowski question-mark function $\ciq$; see \cite[\S 4.4.2]{LLMM1} for the analogy in the anti-holomorphic context).

We denote the Haar (normalized Lebesgue) measure on $\bS^1$ by $m$. Note that $m$ is the unique measure of maximal entropy for the action of $p$ on $\bS^1$ (a straightforward computation shows that the measure-theoretic entropy of $p\vert_{\bS^1}$ with respect to $m$ is equal to the topological entropy $\ln(2d-1)$, and the uniqueness of this measure follows for instance from \cite[Theorem~9]{Lyubich_mme}). Since the homeomorphism $\phi$ is a conjugacy, we have the following.

\begin{prop}\label{mme_leb}
	$\nu=\phi_\ast m$, where $\nu$ is the unique measure of maximal entropy for the $A-$action on $\bS^1$. 
\end{prop}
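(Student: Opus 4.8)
The statement is essentially a formal consequence of the fact that $\phi$ is a topological conjugacy together with the already-established uniqueness of the measure of maximal entropy on both sides. So the proof proposal is to unwind the definitions: push-forward of an invariant measure under a conjugacy is invariant, push-forward preserves measure-theoretic entropy, and the value $\ln(2d-1)$ is the common topological entropy; then invoke uniqueness to identify $\phi_\ast m$ with $\nu$.

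\begin{proof}[Proof proposal]
Recall from Proposition~\ref{b_s_poly_conjugate_prop_1} that $A=A_{\Gamma_0,\textrm{BS}}$ is a $C^1$ expansive degree $2d-1$ covering of $\bS^1$, hence topologically conjugate to $p\colon z\mapsto z^{2d-1}$; fix a conjugating homeomorphism $\phi\colon\bS^1\to\bS^1$ with $\phi\circ p=A\circ\phi$. First I would observe that $\phi_\ast m$ is $A$-invariant: for any Borel set $E$, $(\phi_\ast m)(A^{-1}E)=m(\phi^{-1}A^{-1}E)=m(p^{-1}\phi^{-1}E)=m(\phi^{-1}E)=(\phi_\ast m)(E)$, using $p$-invariance of Haar measure $m$. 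Next, since a homeomorphic conjugacy carries finite measurable partitions to finite measurable partitions and intertwines the dynamics, the Kolmogorov–Sinai entropies satisfy $h_{\phi_\ast m}(A)=h_m(p)$; a standard reference for this invariance is \cite{Walters_book}. As recalled before the statement, $h_m(p)=\ln(2d-1)$ equals the topological entropy of $p$, which in turn equals the topological entropy $h_{\mathrm{top}}(A)=\ln(2d-1)$ of $A$ because topological entropy is a conjugacy invariant (again $A\sim z^{2d-1}$ via $\phi$). Hence $\phi_\ast m$ is a measure of maximal entropy for $A$.

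To conclude that $\phi_\ast m=\nu$, I would invoke uniqueness of the measure of maximal entropy. On the $p$-side this is classical: $m$ is the unique MME for $z^{2d-1}$, e.g.\ by \cite[Theorem~9]{Lyubich_mme} or by the fact that $z^{d}$ on $\bS^1$ is an expanding map with a unique measure of maximal entropy. Transporting uniqueness through the conjugacy $\phi$ (any $A$-invariant measure $\mu$ of entropy $\ln(2d-1)$ pulls back to $\phi^\ast\mu$, a $p$-invariant measure of the same entropy, hence $\phi^\ast\mu=m$, i.e.\ $\mu=\phi_\ast m$) shows that $A$ has a unique MME, which must therefore be $\nu$. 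Combining, $\nu=\phi_\ast m$.

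\textbf{Main obstacle.} There is no serious analytic obstacle here; the only point requiring a little care is the entropy-invariance step. Because $\phi$ is merely a homeomorphism (not bi-Lipschitz or quasiconformal — indeed it is a ``question-mark''-type map that is typically singular), one should justify $h_{\phi_\ast m}(A)=h_m(p)$ purely at the measurable level, using that topological conjugacies induce measure-theoretic isomorphisms of the associated measure-preserving systems and that Kolmogorov–Sinai entropy is an isomorphism invariant. Once this is phrased correctly the argument is routine; the substantive inputs (conjugacy to $z^{2d-1}$, expansiveness, uniqueness of the MME for $z^{2d-1}$) have all been established or cited earlier.
\end{proof}
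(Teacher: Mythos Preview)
Your proposal is correct and takes essentially the same approach as the paper: the paper simply states that the proposition follows because $\phi$ is a conjugacy (the single sentence ``Since the homeomorphism $\phi$ is a conjugacy, we have the following'' is all the justification given), and your argument is a careful unpacking of exactly this observation --- invariance of the push-forward, conjugacy-invariance of Kolmogorov--Sinai entropy, and transport of uniqueness of the MME through $\phi$.
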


\subsubsection{MME of $A$ in terms of symbolic dynamics}
The partition of $\bS^1$ determined by the $2d-$th roots of unity form a Markov partition for $A$. We denote this partition by $\{I_1,I_{-1},\cdots,I_d,I_{-d}\}$, where $I_j$ is the counter-clockwise arc of $\bS^1$ connecting $e^{2\pi i\frac{(j-1)}{2d}}$ and $e^{2\pi i\frac{j}{2d}}$,  and $I_{-j}$ is the complex conjugate of $I_j$, for $j\in\{1,\cdots, d\}$. The transition matrix for this Markov partition is
$$
M:=	\begin{bmatrix}
	1&0&1&1&\cdots&1&1\\
	0&1&1&1&\cdots&1&1\\
	1&1&1&0&\cdots&1&1\\
	1&1&0&1&\cdots&1&1\\
	\hdotsfor{7}\\
	1&1&1&1&\cdots&1&0\\
	1&1&1&1&\cdots&0&1\\
\end{bmatrix}.
$$
The above transition matrix gives rise to a one-sided subshift of finite type 
$$
\sigma: \Sigma_M^+\to\Sigma_M^+.
$$ 
Here $\Sigma_M^+$ is the collection of $M-$admissible infinite words in $\{\pm 1, \pm 2, \cdots, \pm d\}^{\mathbb{N}}$; i.e., 
$$
\Sigma_M^+:=\{(i_1,i_2,\cdots)\in\{\pm 1, \pm 2, \cdots, \pm d\}^{\mathbb{N}}: A(I_{i_j})\supset I_{i_{j+1}}\ \mathrm{for\ all}\ j\geq 1\},
$$
and $\sigma$ is the left-shift map. A \emph{cylinder set of rank $k\geq 1$} in $\Sigma_M^+$ is a set of the form 
$$
\left[r_1,\cdots,r_k\right]:=\{(i_1,i_2,\cdots)\in\Sigma_M^+: i_j=r_j,\ \mathrm{for}\ j\in\{1,\cdots, k\}\},
$$
where $(r_1,\cdots, r_k)\in\{\pm 1,\cdots, \pm d\}^k$. We metrize $\Sigma_M^+$ with the usual ultra-metric (in base $e$). 

Since $A$ is expansive, one obtains a continuous surjection 
$$
\psi:\Sigma_M^+\to\bS^1
$$ 
that semi-conjugates $\sigma$ to $A$. We may and will assume that $\psi$ carries the cylinder set $[\pm j]\subset \Sigma_M^+$ to the Markov partition piece of $A$ connecting $e^{\pm \pi i(j-1)/d}$ to $e^{\pm \pi ij/d}$.

\begin{rmk}
	See \cite{stad} for Markov partitions of Bowen-Series maps associated with more general Fuchsian punctured surface groups. These maps, however, are not continuous if the genus of the surface is greater than zero.
\end{rmk}

The unique measure of maximal entropy for the $\sigma-$action on $\Sigma_M^+$ (which is called the \emph{Parry measure} in symbolic dynamics) is given by the `uniform' Markov measure $\mu$ that assigns mass $\frac{1}{2d\cdot (2d-1)^n}$ to each cylinder set of rank $n+1$ ($n\geq 0$). The corresponding topological entropy is also $\ln(2d-1)$ (note that $2d-1$ is the largest eigenvalue of $M$). The existence of the semi-conjugacy $\psi$ now implies that

\begin{prop}\label{mme_symb}
	The measure of maximal entropy of $A$, which we denote by $\nu$, is the push-forward of the Parry measure $\mu$ under $\psi$; i.e., $\nu=\psi_\ast \mu$.
\end{prop}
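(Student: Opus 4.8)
The plan is to exploit the fact that the measure of maximal entropy (MME) is the \emph{unique} invariant measure realizing the topological entropy $\ln(2d-1)$, and that this unique-maximizer property is preserved under a semi-conjugacy that is ``almost injective''. First I would record the two uniqueness statements already available: $\mu$ is the unique MME for $\sigma\colon\Sigma_M^+\to\Sigma_M^+$ (the Parry measure, uniqueness being standard for transitive subshifts of finite type), and $\nu$ is the unique MME for $A\colon\bS^1\to\bS^1$ (this was invoked in Proposition~\ref{mme_leb}, following \cite{Lyubich_mme}). Both systems have topological entropy $\ln(2d-1)$: for $\sigma$ because $2d-1$ is the Perron eigenvalue of $M$, and for $A$ because it is topologically conjugate to $z^{2d-1}$ by Proposition~\ref{b_s_poly_conjugate_prop_1}. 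Since $\psi\circ\sigma = A\circ\psi$ and $\psi$ is a continuous surjection, the push-forward $\psi_\ast\mu$ is an $A$-invariant Borel probability measure on $\bS^1$.

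The second step is the entropy computation $h_{\psi_\ast\mu}(A)=h_\mu(\sigma)=\ln(2d-1)$. One inequality, $h_{\psi_\ast\mu}(A)\le h_\mu(\sigma)$, is automatic because a factor map cannot increase entropy. For the reverse inequality I would argue that $\psi$ is injective off a set of $\mu$-measure zero. Concretely, the only failure of injectivity of $\psi$ comes from the countable set of points in $\bS^1$ that are identified endpoints of Markov intervals together with their backward orbits under $A$ (equivalently, the grand orbits of the break-points $e^{\pi i j/d}$); since $\psi$ respects the Markov structure, the $\psi$-preimage of any such point is itself countable. The Parry measure $\mu$ is non-atomic and charges every cylinder positively, so the $\sigma$-invariant set of sequences that are eventually ``forced'' in this way (i.e.\ $\psi^{-1}$ of the bad grand orbit) has $\mu$-measure zero. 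On the complement $\psi$ is a measurable bijection onto its image, intertwining $\sigma$ and $A$, so $(\Sigma_M^+,\sigma,\mu)$ and $(\bS^1,A,\psi_\ast\mu)$ are measurably conjugate and have equal entropy. Hence $h_{\psi_\ast\mu}(A)=\ln(2d-1)$, which equals the topological entropy of $A$, so $\psi_\ast\mu$ is \emph{a} measure of maximal entropy for $A$. By uniqueness of the MME of $A$, $\psi_\ast\mu=\nu$.

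The main obstacle I anticipate is the verification that the non-injectivity locus of $\psi$ is $\mu$-null. This requires a reasonably careful description of when two distinct admissible sequences in $\Sigma_M^+$ have the same image under the coding map $\psi$ — the standard ``two codings of a break-point'' phenomenon for Markov maps of the interval/circle — and then an argument that the collection of such sequences, being contained in the $\sigma$-grand orbit of a countable set of ``eventually periodic-type'' sequences, is covered by countably many cylinders of vanishing total $\mu$-mass (in fact it is a countable union of single $\sigma$-grand orbits, each of $\mu$-measure zero since $\mu$ is non-atomic). Everything else — invariance of $\psi_\ast\mu$, the factor-entropy inequality, and the two uniqueness inputs — is either immediate or already cited. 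An alternative, slicker route that avoids the measure-zero bookkeeping is to combine Proposition~\ref{mme_leb} with the observation that $\phi^{-1}\circ\psi\colon\Sigma_M^+\to\bS^1$ is the standard coding semi-conjugacy from $\sigma$ to $p\colon z\mapsto z^{2d-1}$, for which it is classical that the Parry measure pushes forward to normalized Lebesgue measure $m$; then $\psi_\ast\mu=\phi_\ast m=\nu$. I would present the direct uniqueness argument as the main proof and mention this as a remark.
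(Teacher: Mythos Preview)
Your proof is correct and follows the same route the paper takes --- the paper simply asserts that the result follows ``from the existence of the semi-conjugacy $\psi$'' with no further argument, and your proposal spells out exactly the details that make this work (invariance of $\psi_\ast\mu$, entropy preservation via the countable hence $\mu$-null non-injectivity locus, and uniqueness of the MME for $A$). One small caveat on your alternative route: $\Sigma_M^+$ is a proper subshift on $2d$ symbols rather than the full $(2d-1)$-shift, so $\phi^{-1}\circ\psi$ is not literally the ``standard'' coding of $z^{2d-1}$; the conclusion that Parry pushes forward to Lebesgue is still true, but it requires the same uniqueness argument you already gave in the main proof rather than being a shortcut.
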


\subsubsection{MME of $A$ in terms of Patterson-Sullivan measure}

Since the Bowen-Series map $A$ is cooked up from the Fuchsian group $\Gamma_0$, it is natural to ask whether the measure of maximal entropy $\nu$ of $A$ is related to the Patterson-Sullivan measure class of $\Gamma_0$. The following proposition gives a negative answer to this question (recall that a Patterson-Sullivan measure of $\Gamma_0$ lies in the class of the Haar measure $m$). 

\begin{prop}\label{not_abs_cont_lem}
	The measure $\nu$ is not mutually absolutely continuous with respect to the Haar measure $m$; i.e., $\nu$ and $m$ do not lie in the same measure class.
\end{prop}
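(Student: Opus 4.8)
The plan is to exploit the cusps of the quotient surface $\disk/\Gamma_0$. Since $G_d$ is generated by parabolics — for instance $g_1$ and $g_d$ are parabolic, with fixed points $1$ and $-1$, and $A$ acts near $1$ (resp.\ near $-1$) by $g_1^{\pm 1}$ (resp.\ $g_d^{\pm 1}$) — the points $\pm 1$ are symmetrically parabolic fixed points of $A$. A parabolic M\"obius transformation is tangent to the identity to \emph{second} order on $\bS^1$, so in the circle coordinate $A(x) = x + c\,(x-x_\ast)^2 + O((x-x_\ast)^3)$ near each $x_\ast\in\{1,-1\}$; thus $A$ is an intermittent (Pomeau--Manneville type) Markov map with indifferent fixed points of parabolicity exponent $1$.

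I would then invoke two classical facts about the Bowen--Series map, viewed as a piecewise-$C^1$, topologically transitive, conservative and ergodic Markov cover of $\bS^1$ (cf.\ \cite{bowen-series,series-etds}): up to a scalar there is a unique $\sigma$-finite $A$-invariant measure $\nu_{\mathrm{ac}}$ in the Lebesgue class; and its density with respect to $m$ is positive and bounded on compact subsets of $\bS^1\setminus\{1,-1\}$ while near $x_\ast\in\{1,-1\}$ it is comparable to $|x-x_\ast|^{-1}$ (Thaler's asymptotics for an indifferent fixed point of exponent $1$). Consequently $\nu_{\mathrm{ac}}(\bS^1)=\infty$ — exactly as for the Farey map attached to the modular group — so no finite invariant measure in the Lebesgue class exists here.

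Now suppose, for contradiction, that $\nu$ and $m$ were mutually absolutely continuous. Then $\nu$ would be a \emph{finite} $A$-invariant measure equivalent to $m$, hence a scalar multiple of $\nu_{\mathrm{ac}}$ by the uniqueness above — impossible since $\nu(\bS^1)=1$ and $\nu_{\mathrm{ac}}(\bS^1)=\infty$. To display the mechanism concretely (and to obtain the stronger statement $\nu\perp m$) I would argue with Lyapunov exponents. On one hand $\nu$ is ergodic (being the unique measure of maximal entropy), $\log|A'|$ is bounded, and $\int_{\bS^1}\log|A'|\,d\nu \ge h_\nu(A) = \log(2d-1) > 0$ by the Ruelle inequality (alternatively $|A'|\ge 1$ with equality only on the finite parabolic set, while $\nu$ is non-atomic); so Birkhoff's theorem gives $\tfrac1n\log|(A^n)'(x)| \to \lambda_\nu := \int\log|A'|\,d\nu > 0$ for $\nu$-a.e.\ $x$. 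On the other hand $\log|A'|\in L^1(\nu_{\mathrm{ac}})$ — near a cusp $\log|A'| = O(|x-x_\ast|)$ while $d\nu_{\mathrm{ac}}/dm = O(|x-x_\ast|^{-1})$, so the product is bounded — and $\nu_{\mathrm{ac}}$ is conservative, ergodic and infinite; Hopf's ratio ergodic theorem together with the fact that, for such a measure, Birkhoff sums of $L^1$ functions are $o(n)$ almost everywhere yields $\tfrac1n\log|(A^n)'(x)| \to 0$ for $\nu_{\mathrm{ac}}$-a.e.\ $x$, hence (as $m\ll\nu_{\mathrm{ac}}$) for $m$-a.e.\ $x$. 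Therefore the set $Z = \{x:\ \tfrac1n\log|(A^n)'(x)|\to 0\}$ has $m(Z)=1$ and $\nu(Z)=0$, so $\nu$ and $m$ are mutually singular; in particular they are not in the same measure class.

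The load-bearing step is the infiniteness of $\nu_{\mathrm{ac}}$: one must know both that the cusps of $\Gamma_0$ give rise to parabolic periodic points of $A$ with \emph{exactly} quadratic tangency (this is where the even-corners M\"obius geometry enters) and that the invariant density then blows up non-integrably. This is precisely what separates the cusped case from the cocompact one — for a cocompact $\Gamma$ the Bowen--Series a.c.i.m.\ is finite, and the singularity of the maximal entropy measure has instead to be read off from multipliers of periodic orbits, as in \cite{AKU1,AKU2}.
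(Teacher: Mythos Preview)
Your argument is correct and proves the proposition (indeed a bit more), but it takes a completely different route from the paper's proof.

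The paper's proof, credited to Caroline Series, is a three-line argument via the Krieger type classification of nonsingular ergodic actions: the $\Gamma_0$-action on $(\bS^1,m)$ is of type $III_1$, whereas the grand-orbit relation of $z^{2d-1}$ on $(\bS^1,m)$ is of type $III_\lambda$ for some $\lambda<1$ (the paper cites \cite{spatzier}). Since $A$ is orbit equivalent to $\Gamma_0$ and $\nu=\phi_\ast m$ with $\phi$ conjugating $z^{2d-1}$ to $A$, equivalence of $\nu$ and $m$ would force the two Krieger types to coincide, a contradiction.

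Your approach instead exploits the parabolic cusps directly: the neutral fixed points of $A$ force the $\sigma$-finite $A$-invariant measure in the Lebesgue class to be infinite (Thaler's theory of intermittent Markov maps), so no finite $A$-invariant probability equivalent to $m$ can exist. This is more hands-on and avoids the type-$III$ machinery entirely; it also delivers the stronger conclusion $\nu\perp m$ via your Lyapunov-exponent comparison, something the paper reaches only later and by a different route (showing $\mathrm{HD}(\nu)<1$ in Section~\ref{hd_computations_subsec}). The trade-off is that the paper's argument is purely structural and requires no analysis of the map near the cusps, while yours needs the Thaler-type input on the density blow-up and, for the singularity statement, the infinite-ergodic-theory fact that Birkhoff averages of $L^1$ functions are $o(n)$ almost everywhere --- true, but worth a precise reference (e.g.\ Aaronson's monograph) since it is less standard than the finite-measure Birkhoff theorem.
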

\begin{proof} We learned this from Caroline Series. The proposition follows from the facts that
	\begin{enumerate}
		\item the action of the Fuchsian group $\Gamma_0$ on $S^1 = \partial \disk$ is of  type $III_1$.
		\item the action of the polynomial $z^k$ on $S^1 = \partial \disk$ is of  type $III_{\ln(k)}$.
	\end{enumerate} 
	See \cite{spatzier} for details.
\end{proof}

Fortunately, the free group on $d$ generators $F_{d}\cong\Gamma_0$ provides us with a Patterson-Sullivan measure (supported on the Gromov boundary of $F_d$) that is intimately related to $\nu$.

We denote the Cayley tree of $F_d$ by $X$, and equip it with the word metric. The group acts on the tree by isometries. 
The Gromov boundary of $F_d$ is denoted by $\partial X$. Note that we can naturally identify $\partial X$ with the shift space $\Sigma_M^+$. Visualizing the Cayley tree $X$ as dual to the $\Gamma_0-$tessellation of $\mathbb{D}$ (associated with the fundamental domain $R$), one sees in light of the identification $\partial X\cong \Sigma_M^+$ that the map $\psi$ is the (Floyd-)Cannon-Thurston map from $\partial X$ to $\bS^1$ (cf. \cite{Floyd}).

\begin{defn}\label{def-cone}
	Let $X$ denote a Cayley graph of a group $\G$. Let $g \in \G$ (thought of as a vertex of $X$). The cone of $g$ consists of the vertices $h \in X$ such that any geodesic $[1,g]$ followed by
	any geodesic $[g,h]$ is a geodesic $[1,h]$ in $X$ joining $1, h$.
\end{defn}
The next result enables us to connect $\nu$ to a suitable Patterson-Sullivan measure on $\partial X$.

\begin{lemma}\label{ps_parry_lem}
	The Patterson-Sullivan measure on $\partial X$ (with respect to the base point $1$ and the standard generating set) is given by the Parry measure $\mu$.
\end{lemma}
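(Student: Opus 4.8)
The plan is to compute the Patterson--Sullivan measure on $\partial X$ directly from its defining formula and to recognize the result as the Parry measure $\mu$. Recall that $X$ is the Cayley tree of $F_d$ with the standard generating set $\{g_1,\dots,g_d\}$, so $X$ is a regular tree in which the root $1$ has valence $2d$ and every other vertex has valence $2d$ as well (one edge back toward the root, $2d-1$ edges forward). First I would write down the Poincar\'e-type series: for $s>0$ set $\mathcal{P}(s)=\sum_{g\in F_d}e^{-s\,|g|}$, where $|g|$ is the word length. Counting vertices by distance from the root gives $\#\{g:|g|=0\}=1$ and $\#\{g:|g|=n\}=2d\,(2d-1)^{n-1}$ for $n\ge 1$, so $\mathcal{P}(s)$ converges precisely when $e^{-s}(2d-1)<1$, i.e. the critical exponent is $\delta=\ln(2d-1)$, and the series diverges at $s=\delta$. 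Hence $F_d$ is of divergence type and the Patterson--Sullivan construction (with base point $1$) produces, as the weak limit of the normalized atomic measures $\mathcal{P}(s)^{-1}\sum_{g}e^{-s|g|}\,\delta_{g}$ as $s\downarrow\delta$, a measure $\nu_{\mathrm{PS}}$ on $\partial X$ with no atoms.

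The key step is to evaluate the mass $\nu_{\mathrm{PS}}$ assigns to a shadow (equivalently, to a cylinder set). Fix a vertex $g\in F_d$ with $|g|=n\ge 1$ and let $\mathrm{Cone}(g)\subset\partial X$ be the set of boundary points whose geodesic ray from $1$ passes through $g$; under the identification $\partial X\cong\Sigma_M^+$ this is exactly a rank-$n$ cylinder set $[r_1,\dots,r_n]$, where $r_1\cdots r_n$ is the reduced word spelling $g$. Because $X$ is a homogeneous tree, the portion of the sphere of radius $n+m$ lying in the cone of $g$ has cardinality $(2d-1)^m$, i.e. a fraction $(2d-1)^m/\big(2d\,(2d-1)^{n+m-1}\big)=\tfrac{1}{2d\,(2d-1)^{n-1}}$ of the whole sphere of radius $n+m$, independently of $m$. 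Feeding this into the normalized sums $\mathcal{P}(s)^{-1}\sum_g e^{-s|g|}\delta_g$ and passing to the limit $s\downarrow\delta$ — where the dominant contribution comes from large spheres and each large sphere of radius $N$ contributes essentially uniformly over its $2d\,(2d-1)^{N-1}$ points — yields
\[
\nu_{\mathrm{PS}}\big(\mathrm{Cone}(g)\big)=\frac{1}{2d\,(2d-1)^{n-1}}=\frac{1}{2d\,(2d-1)^{n-1}}
\]
for every vertex $g$ at distance $n\ge 1$. In the language of rank-$(n+1)$ cylinders (so that a rank-$(n+1)$ cylinder $[r_0,r_1,\dots,r_n]$ corresponds to a cone at distance $n$), this reads $\nu_{\mathrm{PS}}[r_0,\dots,r_n]=\tfrac{1}{2d\,(2d-1)^{n}}$, which is precisely the mass the Parry measure $\mu$ assigns to a rank-$(n+1)$ cylinder set. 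Since the cylinder sets generate the Borel $\sigma$-algebra of $\partial X\cong\Sigma_M^+$, a measure is determined by its values on them, and therefore $\nu_{\mathrm{PS}}=\mu$, proving the lemma.

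The main obstacle is the limiting argument at the critical exponent: one must justify that, as $s\downarrow\delta$, the normalized weights $\mathcal{P}(s)^{-1}e^{-s|g|}$ distribute themselves uniformly on each sphere in the limit, so that the cone-mass computation on spheres genuinely passes to $\nu_{\mathrm{PS}}$. This is where one invokes the standard features of the Patterson--Sullivan machine for a divergence-type group acting on a tree: the limit measure is (the unique) $\delta$-conformal density, and on a homogeneous tree the constant (suitably normalized spherical) density is manifestly $\delta$-conformal, so uniqueness pins it down. Alternatively, and perhaps more cleanly, I would bypass the limit by checking directly that $\mu$ is the $F_d$-conformal density of dimension $\delta=\ln(2d-1)$: the Radon--Nikodym cocycle $\tfrac{d(h_*\mu)}{d\mu}(\xi)=e^{-\delta\,\beta_\xi(1,h^{-1}1)}$ is verified on cylinders using the tree Busemann function (which on a tree is just a difference of word lengths), and then appeal to uniqueness of the conformal density for a divergence-type action to conclude $\mu=\nu_{\mathrm{PS}}$. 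Either route reduces the whole statement to the elementary sphere-counting $\#\{|g|=n\}=2d\,(2d-1)^{n-1}$ together with the tree's self-similarity.
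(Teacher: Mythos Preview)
Your proposal is correct and follows essentially the same approach as the paper: compute the mass of cones via sphere counting in the regular tree and verify it matches the Parry measure on cylinders. The only minor difference is that the paper works directly with the truncated sums at the critical exponent (the measures $\mu_n$ obtained by cutting the Poincar\'e series at level $n$ with $s=\ln(2d-1)$) and takes an explicit elementary limit as $n\to\infty$, which sidesteps the $s\downarrow\delta$ passage you flag as the main obstacle; your alternative via uniqueness of the $\delta$-conformal density is sound but unnecessary here.
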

\begin{proof}
	Note that the number of words in $F_d$ of length $r$ is $2d\cdot (2d-1)^{r-1}$, for $r\geq 1$. Hence, the Patterson-Sullivan measure on $\partial X$ (with respect to the base point $1$ and the standard generating set) is a weak limit of the measures
\begin{equation}
	\mu_n:= \frac{\delta_1 + \sum_{j=1}^n \frac{1}{(2d-1)^j}\left(\sum_{\vert g\vert =j} \delta_g\right)}{1+  \sum_{j=1}^n \frac{2d\cdot (2d-1)^{j-1}}{(2d-1)^j}} = \frac{\delta_1 + \sum_{j=1}^n \frac{1}{(2d-1)^j}\left(\sum_{\vert g\vert =j} \delta_g\right)}{1+ \frac{2dn}{2d-1}}.
\label{ps_formula}
\end{equation}
	A straightforward computation now shows that the $\mu_{n+r}-$mass of the cone at a group element of length $r$ is:
	$$
	\frac{1}{1+\frac{2d(n+r)}{2d-1}} \cdot \frac{n+1}{(2d-1)^r},
	$$
	which tends to $\frac{1}{2d(2d-1)^{r-1}}$ as $n\to+\infty$. It follows that the Patterson-Sullivan measure on $\partial X$ assigns mass $\frac{1}{2d(2d-1)^{r-1}}$ to each cylinder set (in $\partial X$) of rank $r$. In view of the definition of $\mu$, the proof is now complete.
\end{proof}

Since $\nu=\psi_\ast\mu$, we conclude the following result.

\begin{prop}\label{mme_ps}
	The measure of maximal entropy $\nu$ of the Bowen-Series map $A$ is the push-forward of the Patterson-Sullivan measure $\mu$ on $\partial X$ (with respect to the base point $1$ and the standard generating set) under the (Floyd-)Cannon-Thurston map $\psi$.
\end{prop}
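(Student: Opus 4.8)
The plan is to assemble the statement directly from the two preceding results, so that no new machinery is required. First I would recall Proposition~\ref{mme_symb}, which identifies $\nu$ with $\psi_\ast\mu$, where $\mu$ is the Parry (uniform Markov) measure on the subshift $\Sigma_M^+$ and $\psi\colon\Sigma_M^+\to\bS^1$ is the semi-conjugacy produced by expansiveness of $A$. The one point deserving a word of justification is that the data appearing in Lemma~\ref{ps_parry_lem} and in Proposition~\ref{mme_symb} are literally the same object: under the homeomorphism $\partial X\cong\Sigma_M^+$ sending an infinite reduced word in the generators $g_1^{\pm1},\dots,g_d^{\pm1}$ to the corresponding $M$-admissible sequence (reducedness being precisely the admissibility condition encoded by the off-diagonal zeros of $M$), the map $\psi$ becomes the (Floyd-)Cannon-Thurston map from $\partial X$ onto $\bS^1$, as already noted before Definition~\ref{def-cone} via the duality between the Cayley tree $X$ of $F_d$ and the $\Gamma_0$-tessellation of $\disk$ associated with $R$.

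Second, I would invoke Lemma~\ref{ps_parry_lem}: transported through the identification $\partial X\cong\Sigma_M^+$, the Parry measure $\mu$ coincides with the Patterson--Sullivan measure on $\partial X$ built from the base point $1$ and the standard generating set of $F_d$. Substituting this into the equality $\nu=\psi_\ast\mu$ of Proposition~\ref{mme_symb}, and reading $\psi$ as the (Floyd-)Cannon-Thurston map, yields exactly the asserted description of $\nu$ as the push-forward of that Patterson--Sullivan measure. (As a sanity check one could instead route through Proposition~\ref{mme_leb}, but the symbolic route is the most economical.)

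The proof is therefore a two-line concatenation, and there is no genuine obstacle; the only thing to be careful about is bookkeeping, namely verifying once and for all that a \emph{single} homeomorphism $\partial X\cong\Sigma_M^+$ is being used consistently in Lemma~\ref{ps_parry_lem}, in Proposition~\ref{mme_symb}, and in the identification of $\psi$ with the Cannon-Thurston map, so that the compositions match on the nose rather than merely up to a relabeling of the alphabet.
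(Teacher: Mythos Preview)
Your proposal is correct and follows exactly the paper's approach: the paper deduces the proposition in one line from the equality $\nu=\psi_\ast\mu$ (Proposition~\ref{mme_symb}) together with Lemma~\ref{ps_parry_lem}, having already observed before Definition~\ref{def-cone} that $\psi$ is the (Floyd--)Cannon--Thurston map under the identification $\partial X\cong\Sigma_M^+$. Your attention to the bookkeeping of this identification is appropriate but does not constitute a different route.
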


\subsubsection{$\Gamma_0-$invariance of the MME of $A$}

We now exploit the connection between $\nu$ and Patterson-Sullivan measures to exhibit $\Gamma_0-$invariance of the measure class of~$\nu$.

\begin{prop}\label{mme_meas_class_inv}
	For each $\gamma\in\Gamma_0$, the measures $\nu$ and $\gamma_\ast\nu$ are mutually absolutely continuous.
\end{prop}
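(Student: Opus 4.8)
The plan is to push the statement to the Gromov boundary $\partial X$ of $F_d\cong\Gamma_0$, where the relevant measure is a Patterson--Sullivan measure and is therefore automatically quasi-invariant under the group. The starting point is the chain of identifications already in place: by Proposition~\ref{mme_ps} we have $\nu=\psi_\ast\mu$, where $\psi\colon\Sigma_M^+\cong\partial X\to\bS^1$ is the (Floyd--)Cannon--Thurston map and $\mu$ is the Parry measure, which Lemma~\ref{ps_parry_lem} identifies with the Patterson--Sullivan measure of $\partial X$. The structural fact I would invoke is that $\psi$ is $\Gamma_0$-equivariant: under the identification $F_d\cong\Gamma_0$ one has $\psi(\gamma\cdot\xi)=\gamma\cdot\psi(\xi)$ for all $\gamma\in\Gamma_0$, $\xi\in\partial X$, this being the defining intertwining property of the Cannon--Thurston/Floyd map (and consistent with the way $\psi$ was built, the first symbol of $\xi\in\Sigma_M^+$ recording which Bowen--Series piece is applied at $\psi(\xi)$). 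Granting this, for every $\gamma\in\Gamma_0$ one obtains $\gamma_\ast\nu=\gamma_\ast\psi_\ast\mu=(\gamma\circ\psi)_\ast\mu=(\psi\circ\gamma)_\ast\mu=\psi_\ast(\gamma_\ast\mu)$, where on the right $\gamma_\ast$ refers to push-forward under the boundary action on $\partial X$. Thus the problem reduces to two soft statements: (a) $\gamma_\ast\mu$ and $\mu$ are mutually absolutely continuous on $\partial X$ for every $\gamma$, and (b) push-forward by $\psi$ preserves mutual absolute continuity.

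For (a) I would cite the basic quasi-invariance property of Patterson--Sullivan measures (see \cite{patterson,sullivan_cm,coor}): since $\mu$ is a conformal density for the $F_d$-action on $\partial X$, the Radon--Nikodym derivative $d(\gamma_\ast\mu)/d\mu$ exists and is everywhere positive and bounded above and below, whence $\gamma_\ast\mu\sim\mu$. If one prefers a self-contained argument, it suffices to treat a single generator $g_m$ (the set $\{\gamma:\gamma_\ast\mu\sim\mu\}$ being a subgroup, by functoriality of push-forward) and to observe that $\xi\mapsto g_m\cdot\xi$ carries each cylinder set of $\Sigma_M^+$ to a cylinder set --- this is merely the combinatorics of prepending, or cancelling, one letter in a reduced word --- so that, since $\mu$ assigns mass $\tfrac{1}{2d(2d-1)^{r-1}}$ to every rank-$r$ cylinder, comparison of masses shows $d(g_m)_\ast\mu/d\mu$ to be locally constant with values in $\{2d-1,(2d-1)^{-1}\}$. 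Statement (b) is routine: if $\mu_1\ll\mu_2$ and $\psi_\ast\mu_2(N)=0$ then $\mu_2(\psi^{-1}N)=0$, hence $\mu_1(\psi^{-1}N)=0$, hence $\psi_\ast\mu_1(N)=0$; applying this in both directions preserves mutual absolute continuity. Combining (a), (b) and the displayed identity then yields $\gamma_\ast\nu=\psi_\ast(\gamma_\ast\mu)\sim\psi_\ast\mu=\nu$, as desired.

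The only genuinely non-formal point --- and hence the step I expect to be the main obstacle to write carefully --- is the equivariance of $\psi$: one must be sure that the forward-coding semiconjugacy of Proposition~\ref{mme_symb} really is the equivariant Floyd--Cannon--Thurston map, so that the single map $\psi$ simultaneously intertwines the shift $\sigma$ with $A$ and the $F_d$-action on $\partial X$ with the $\Gamma_0$-action on $\bS^1$; this is the compatibility recorded in the discussion preceding Definition~\ref{def-cone}, and once it is secured the rest is purely functorial. It is worth noting that the conclusion is substantive in view of Proposition~\ref{not_abs_cont_lem}: the measure class of $\nu$ on $\bS^1$ is $\Gamma_0$-invariant and yet singular with respect to the Haar (Patterson--Sullivan) class of $\Gamma_0$ acting on $\bS^1$ --- it is precisely the class obtained by transporting, through the Cannon--Thurston map, the Patterson--Sullivan class of the abstract free group $F_d$ equipped with its word metric.
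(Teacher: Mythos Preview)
Your proof is correct and follows essentially the same route as the paper's: use $\nu=\psi_\ast\mu$ from Proposition~\ref{mme_ps}, invoke the $\Gamma_0$-equivariance of the (Floyd--)Cannon--Thurston map $\psi$ (the paper handles this by a citation to \cite{Floyd}), then appeal to quasi-invariance of the Patterson--Sullivan measure $\mu$ on $\partial X$ (the paper cites \cite[Theorem~5.4, Theorem~8.2]{coor}) and the fact that push-forward preserves mutual absolute continuity. Your self-contained cylinder-set argument for the quasi-invariance of $\mu$ and your explicit verification of (b) are welcome elaborations, but the architecture is identical.
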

\begin{proof}
	First note that the (Floyd-)Cannon-Thurston map $\psi$ semi-conjugates the $F_d-$action on $\partial X$ to the $\Gamma_0-$action on $\bS^1$ \cite{Floyd}. We will denote the element of $F_d$ corresponding to $\gamma\in\Gamma_0$ by $\widetilde{\gamma}$. 
	
	By Proposition~\ref{mme_ps} and the previous paragraph, the measure $\gamma_\ast\nu$ on $\bS^1$ is the push-forward of the measure $\widetilde{\gamma}_\ast\mu$ on $\partial X$ under $\psi$. Moreover, as $\mu$ is a Patterson-Sullivan measure on $\partial X$, it follows that $\widetilde{\gamma}_\ast\mu$ and $\mu$ are mutually absolutely continuous (see \cite[Theorem~5.4, Theorem~8.2]{coor}). It is now easy to see using the definition of push-forward of a measure that the measures $\nu=\psi_\ast(\mu)$ and $\gamma_\ast\nu=\psi_\ast(\widetilde{\gamma}_\ast\mu)$ are mutually absolutely continuous.
\end{proof}

\begin{rmk}\label{rmk_busemann}
For $\gamma\in\Gamma_0$, the Radon-Nikodym derivative $d(\gamma_\ast\nu)/d\nu$ can be written in terms of $\psi$ and the Radon-Nikodym derivative $d(\widetilde{\gamma}_\ast\mu)/d\mu$, which in turn can be computed from measures of cylinder sets (see \cite[\S 8]{coor} for a general method of describing such Radon-Nikodym derivatives in terms of Busemann functions).
\end{rmk}

Now observe that the Bowen-Series map $A$ does not depend  only on the group $\Gamma_0$, but also on the choice of the fundamental domain $R$. The translation of $R$ by an element $\gamma\in\Gamma_0$ is a different fundamental domain $\gamma\cdot R$ for $\Gamma_0$. We denote the Bowen-Series map of $\Gamma_0$ associated with the fundamental domain $\gamma\cdot R$ by $A^\gamma$. Clearly, $A^\gamma=\gamma\circ A\circ\gamma^{-1}$. Moreover, the unique measure of maximal entropy for the $A^\gamma-$action on $\bS^1$ is given by $\gamma_\ast\nu$. Proposition~\ref{mme_meas_class_inv} now implies the following.

\begin{cor}\label{mme_abs_cont}
	The measures of maximal entropy for the Bowen-Series maps associated with the fundamental domains $\gamma\cdot R$ (for $\gamma\in\Gamma_0$) are mutually absolutely continuous. In particular, all these measures have the same Hausdorff dimension.
\end{cor}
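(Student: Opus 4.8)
The plan is to deduce the corollary directly from Proposition \ref{mme_meas_class_inv} together with the identification of the maximal entropy measure of $A^\gamma$ as a push-forward of $\nu$. First I would record the elementary dynamical fact that $A^\gamma = \gamma\circ A\circ\gamma^{-1}$ for every $\gamma\in\Gamma_0$; this is immediate from the definition of the Bowen-Series map associated with the translated fundamental domain $\gamma\cdot R$, since the edges of $\gamma\cdot R$ are exactly the $\gamma$-images of the edges of $R$ and the side-pairing transformations conjugate accordingly. Because measure-theoretic entropy and topological entropy are conjugacy invariants, and because $\gamma$ is a homeomorphism of $\bS^1$, it follows that $\gamma_\ast\nu$ is the unique measure of maximal entropy for $A^\gamma$: indeed $h_{\gamma_\ast\nu}(A^\gamma) = h_\nu(A) = \ln(2d-1)$, and uniqueness of the MME for $A^\gamma$ (which is topologically conjugate to $z^{2d-1}$ just as $A$ is) transports from that of $\nu$ under the conjugacy $\gamma$.

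With this in place, the mutual absolute continuity assertion is precisely the content of Proposition \ref{mme_meas_class_inv}: for each $\gamma\in\Gamma_0$, $\nu$ and $\gamma_\ast\nu$ lie in the same measure class, hence the maximal entropy measures of the Bowen-Series maps associated with $R$ and with $\gamma\cdot R$ are mutually absolutely continuous. Since mutual absolute continuity is an equivalence relation, the same holds between the maximal entropy measures associated with $\gamma_1\cdot R$ and $\gamma_2\cdot R$ for any $\gamma_1,\gamma_2\in\Gamma_0$ (apply the previous sentence with $\gamma = \gamma_2\gamma_1^{-1}$ and push forward by $\gamma_1$, using that pushing two mutually absolutely continuous measures forward by a common homeomorphism preserves mutual absolute continuity). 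This gives the first sentence of the corollary.

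For the final sentence, I would invoke the standard fact that the Hausdorff dimension of a Borel probability measure on a metric space — defined, say, as the infimum of the Hausdorff dimensions of Borel sets of full measure — depends only on the measure class: if $\mu_1$ and $\mu_2$ are mutually absolutely continuous, then a Borel set has full $\mu_1$-measure if and only if it has full $\mu_2$-measure, so the two infima coincide. Applying this with $\mu_1,\mu_2$ ranging over the maximal entropy measures of the various $A^\gamma$ yields that they all share a common Hausdorff dimension.

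I do not expect a serious obstacle here: the corollary is a formal consequence of Proposition \ref{mme_meas_class_inv} and soft facts about entropy, conjugacy, and Hausdorff dimension of measure classes. The only point requiring a word of care is the identity $A^\gamma = \gamma\circ A\circ\gamma^{-1}$ and the resulting uniqueness of $\gamma_\ast\nu$ as the MME of $A^\gamma$; but both follow transparently from the equivariance of the Bowen-Series construction under the $\Gamma_0$-action on fundamental domains. If one wished to be fully self-contained about the last sentence, one could instead note that $\gamma\colon\bS^1\to\bS^1$ is bi-Lipschitz (being the boundary restriction of a Möbius transformation of $\disk$, which is smooth on $\bS^1$), so it preserves Hausdorff dimension of measures outright, and then mutual absolute continuity upgrades this to equality across the whole $\Gamma_0$-orbit; I would include whichever of these two justifications reads more cleanly.
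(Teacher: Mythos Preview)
Your proposal is correct and follows essentially the same approach as the paper: the paper records (in the paragraph immediately preceding the corollary) that $A^\gamma=\gamma\circ A\circ\gamma^{-1}$ and that $\gamma_\ast\nu$ is the unique MME for $A^\gamma$, and then states that the corollary follows from Proposition~\ref{mme_meas_class_inv}. Your write-up supplies exactly these ingredients, together with the routine observation that Hausdorff dimension of a measure depends only on its measure class.
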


\subsubsection{MME for matings of Bowen-Series maps and polynomials}

Recall that Theorem~\ref{moduli_interior_mating_thm} provides us with a conformal mating of the canonical extension $\widehat{A}$ of the Bowen-Series map $A$ (associated with the fundamental domain $R$ of $\Gamma_0$) and the polynomial map $z^{2d-1}$. Also note that the restriction of this conformal mating on its Jordan curve limit set is topologically conjugate to $A\vert_{\bS^1}$.
The following description of the measure of maximal entropy of the conformal mating now follows from Propositions~\ref{mme_leb} and~\ref{mme_ps}.

\begin{prop}\label{mating_meas_max_ent}
	The unique measure of maximal entropy of the conformal mating of $\widehat{A}$ and $z^{2d-1}$ restricted to the limit set is equal to the push-forward of the normalized Lebesgue measure $m$ (which is the unique measure of maximal entropy of $z^{2d-1}\vert_{\bS^1}$) as well as the push-forward of the Patterson-Sullivan measure $\mu$ on $\partial X$ (with respect to the base point $1$ and the standard generating set) under appropriate conjugacies. In particular, the corresponding topological entropy is $\ln(2d-1)$.
\end{prop}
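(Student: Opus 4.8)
The plan is to read off the measure of maximal entropy of the mating from the measure of maximal entropy $\nu$ of $A\vert_{\bS^1}$, which has already been identified in two ways, by transporting it through the topological conjugacies supplied by the mating construction. The only general facts I will invoke are that topological entropy is a topological conjugacy invariant and that a homeomorphic conjugacy between continuous self-maps of compact metric spaces carries the (unique, when it exists) measure of maximal entropy of one system to that of the other.

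First I would apply Theorem~\ref{moduli_interior_mating_thm} with $\Gamma=\Gamma_0$ and $P=z^{2d-1}\in\mathcal{H}_{2d-1}$. This produces the conformal mating: a holomorphic map $F$, a Jordan curve $\mathfrak{J}$ which is the limit set of the mating, and conformal conjugacies $\phi^\textrm{in},\phi^\textrm{out}$ from $\widehat{A}$ and from $z^{2d-1}$ to $F$. As recorded in the paragraph preceding the proposition, $F\vert_{\mathfrak{J}}$ is topologically conjugate to $A\vert_{\bS^1}$; I would fix a homeomorphism $h:\bS^1\to\mathfrak{J}$ realizing this conjugacy. Since $A\vert_{\bS^1}$ is topologically conjugate to $z^{2d-1}\vert_{\bS^1}$ by Proposition~\ref{b_s_poly_conjugate_prop_1}, it has topological entropy $\ln(2d-1)$ and a unique measure of maximal entropy $\nu$; both properties pass through $h$, so $F\vert_{\mathfrak{J}}$ has topological entropy $\ln(2d-1)$ and its unique measure of maximal entropy is $h_\ast\nu$.

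It then remains to substitute the two descriptions of $\nu$ obtained earlier. By Proposition~\ref{mme_leb}, $\nu=\phi_\ast m$, where $\phi:\bS^1\to\bS^1$ conjugates $z^{2d-1}$ to $A$; hence the measure of maximal entropy of the mating equals $h_\ast\nu=(h\circ\phi)_\ast m$, the push-forward of the normalized Lebesgue measure $m$ under the conjugacy $h\circ\phi$ between $z^{2d-1}\vert_{\bS^1}$ and $F\vert_{\mathfrak{J}}$. By Proposition~\ref{mme_ps}, $\nu=\psi_\ast\mu$, where $\psi:\partial X\to\bS^1$ is the (Floyd-)Cannon-Thurston map and $\mu$ the Patterson-Sullivan measure on $\partial X$ with respect to the base point $1$ and the standard generating set; hence the same measure equals $h_\ast\nu=(h\circ\psi)_\ast\mu$. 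The two expressions are automatically compatible because $\phi_\ast m=\psi_\ast\mu=\nu$ already on $\bS^1$.

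I do not anticipate a real obstacle: the argument is essentially bookkeeping with conjugacies and push-forwards. The point that deserves the most care is the invariance of the measure of maximal entropy, and of its uniqueness, under topological conjugacy, together with the verification --- which is exactly the content of the sentence preceding the proposition --- that $F\vert_{\mathfrak{J}}$ is indeed the mating restricted to its limit set and is a degree $2d-1$ cover of the circle conjugate to $z^{2d-1}$.
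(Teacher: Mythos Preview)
Your proposal is correct and follows the same line as the paper: the proposition is stated there as an immediate consequence of Propositions~\ref{mme_leb} and~\ref{mme_ps} together with the fact (recalled just before the statement) that the conformal mating restricted to its limit set is topologically conjugate to $A\vert_{\bS^1}$. Your explicit bookkeeping with the conjugacies $h$, $\phi$, $\psi$ and the push-forwards is exactly what the paper leaves implicit.
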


\subsubsection{Topological entropy of $A$ from a group-theoretic perspective}

The topological entropy $\ln(2d-1)$ of $A$ can be related to the \emph{volume entropy} of the group $F_d$, which measures
 the exponential growth rate of the number of words of length $n$ in a group (equivalently, the exponential growth rate of the number of group elements in a ball of radius $n$ around identity).

\begin{lemma}\label{vol_entropy_lem}
	The volume entropy of $F_d$ with respect to the standard (symmetric) set of generators and the critical exponent for the $F_d-$action on $X$ are both equal to $\ln(2d-1)$.
\end{lemma}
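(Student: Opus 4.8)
The plan is to prove Lemma~\ref{vol_entropy_lem} by a direct computation, since both quantities in question have elementary closed-form descriptions for the free group $F_d$ with its standard symmetric generating set.

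First I would recall the relevant definitions. The \emph{volume entropy} (or growth entropy) of $F_d$ with respect to the standard generating set $\{a_1^{\pm 1},\dots,a_d^{\pm 1}\}$ is $\lim_{n\to\infty}\frac{1}{n}\ln |B(1,n)|$, where $B(1,n)$ denotes the ball of radius $n$ in the word metric. As noted already in the proof of Lemma~\ref{ps_parry_lem}, the number of elements of $F_d$ of word length exactly $r$ is $2d\cdot(2d-1)^{r-1}$ for $r\geq 1$ (the first letter can be any of $2d$ generators, and each subsequent letter any of the $2d-1$ generators that does not cancel the previous one, because the Cayley graph is a $2d$-regular tree). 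Hence $|B(1,n)| = 1 + \sum_{r=1}^n 2d\cdot(2d-1)^{r-1}$, which is comparable to a constant times $(2d-1)^n$. Taking logarithms and dividing by $n$ gives $\lim_{n\to\infty}\frac{1}{n}\ln|B(1,n)| = \ln(2d-1)$, establishing the first equality.

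Next I would handle the \emph{critical exponent} of the $F_d$-action on its Cayley tree $X$. By definition this is the infimum of those $s>0$ for which the Poincar\'e series $\sum_{g\in F_d} e^{-s\, d_X(1,g)}$ converges (equivalently, the abscissa of convergence of this Dirichlet series). Grouping terms by word length and using the count $2d\cdot(2d-1)^{r-1}$ again, the series becomes $1 + \sum_{r=1}^\infty 2d\cdot(2d-1)^{r-1} e^{-sr}$, a geometric series that converges precisely when $(2d-1)e^{-s} < 1$, i.e.\ when $s > \ln(2d-1)$, and diverges when $s<\ln(2d-1)$. Thus the critical exponent equals $\ln(2d-1)$, matching the volume entropy. (That the common value $\ln(2d-1)$ also coincides with the topological entropy of $A$ and with the growth rate of the Parry measure's normalization was already recorded around Proposition~\ref{mme_symb}, via the observation that $2d-1$ is the Perron eigenvalue of the transition matrix $M$.)

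There is essentially no obstacle here: the only mild point of care is the standard fact that for a group acting on its own Cayley graph the critical exponent agrees with the abscissa of convergence of the Poincar\'e series written in terms of the word metric, and that this abscissa equals the exponential growth rate of the ball-counting function whenever the latter grows purely exponentially (which it does, since the dominant term $(2d-1)^n$ is unaffected by the lower-order corrections). Both computations reduce to summing a geometric series, so the proof is short; the content of the lemma is conceptual, identifying the dynamical quantity $\ln(2d-1)$ of Section~\ref{sec_meas_th_bs} with a purely group-theoretic invariant of $F_d\cong\Gamma_0$.
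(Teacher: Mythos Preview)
Your proof is correct and follows essentially the same approach as the paper: both count words of length $r$ as $2d\cdot(2d-1)^{r-1}$, deduce the volume entropy from the exponential growth of $|B(1,n)|$, and identify the critical exponent by summing the resulting geometric Poincar\'e series. The only cosmetic difference is that the paper writes out the closed form $1+2d\frac{(2d-1)^n-1}{2d-2}$ for the ball count explicitly, whereas you argue by comparability to $(2d-1)^n$.
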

\begin{proof}
	Recall that the number of words in $F_d$ of length $r$ is $2d\cdot (2d-1)^{r-1}$, for $r\geq 1$. Hence, 
	$$
	\# \{ g\in F_d: \vert g\vert\leq n\}= 1+ 2d\sum_{r=1}^n (2d-1)^{r-1}=  1+ 2d\frac{(2d-1)^n-1}{2d-2},
	$$
	from which it follows that the volume entropy is $\ln(2d-1)$.
	
	Now consider the Poincar{\'e} series with exponent $s$:
	$$
	\zeta_{F_d}(s):= \sum_{g\in F_d}e^{-s\vert g\vert}= \sum_{n=1}^\infty\sum_{\substack{g\in F_d\\ \vert g\vert=n}} e^{-sn}= 2d \sum_{n=1}^\infty \frac{(2d-1)^{n-1}}{e^{sn}} = \frac{2d}{2d-1} \sum_{n=1}^\infty \left(\frac{2d-1}{e^s}\right)^n.
	$$
	Clearly, the series converges if and only if $2d-1< e^s \iff s> \ln(2d-1)$. In particular, the critical exponent is $\ln(2d-1)$.
\end{proof}

\begin{rmk}
	A connection between the topological entropy of Bowen-Series maps associated with cocompact Fuchsian groups and the volume entropy of suitable hyperbolic groups was established in \cite{los14}.
\end{rmk}

In \cite[Theorem~1]{haus-dim-sul}, Sullivan proved equality of  critical exponents and Hausdorff dimensions of limit sets for geometrically finite Kleinian groups. While the analogous result for hyperbolic groups follows from general consideration (cf. \cite[Theorem~8.3]{coor}  \cite[Theorem~15.8]{kap-ben}), we can give a simple proof in the present setting.

\begin{lemma}\label{hd_bdry_lem}
	The Hausdorff dimension of the Gromov boundary of $F_d$ equipped with the visual metric (in base $e$) is equal to $\ln(2d-1)$. Moreover, the $\ln(2d-1)-$dimensional Hausdorff measure $\mathscr{H}^{\ln(2d-1)}$ (on $\partial X$) 
	and $\mu$ are mutually absolutely continuous.
\end{lemma}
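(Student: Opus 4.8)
The plan is to reduce the statement about Hausdorff dimension and measure to an explicit computation on the regular $2d$-valent tree, exploiting the identification $\partial X \cong \Sigma_M^+$ together with the ultra-metric structure already set up. First I would recall that the visual metric (in base $e$) on $\partial X$ assigns to a cylinder set of rank $r$ a diameter comparable to $e^{-r}$; this is immediate from the definition of the Gromov product on a tree, since two boundary points lie in a common rank-$r$ cylinder precisely when their geodesics from $1$ agree for $r$ steps. Thus the cylinder sets of rank $r$ form a natural covering of $\partial X$ by $2d(2d-1)^{r-1}$ sets each of diameter $\asymp e^{-r}$.

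Next I would run the standard mass-distribution / covering argument. For the upper bound on Hausdorff dimension: using the rank-$r$ cylinder covering, the $s$-dimensional Hausdorff sum is comparable to $2d(2d-1)^{r-1} e^{-rs}$, which tends to $0$ as $r\to\infty$ whenever $s > \ln(2d-1)$; hence $\dim_H \partial X \le \ln(2d-1)$. For the lower bound, I would invoke the mass distribution principle applied to the Parry measure $\mu$ of Lemma~\ref{ps_parry_lem}: since $\mu$ of a rank-$r$ cylinder equals $\tfrac{1}{2d(2d-1)^{r-1}} \asymp (e^{-r})^{\ln(2d-1)}$, and since an arbitrary ball of radius $\rho$ in the visual metric is contained in a bounded number of cylinders of comparable rank (bounded because the tree is regular), $\mu$ is Ahlfors-regular of dimension $\ln(2d-1)$. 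The mass distribution principle then gives $\dim_H \partial X \ge \ln(2d-1)$, completing the dimension computation.

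For the mutual absolute continuity of $\mathscr{H}^{\ln(2d-1)}$ and $\mu$, I would argue that Ahlfors $\ln(2d-1)$-regularity of $\mu$ already forces it: there are constants $0 < c \le C$ with $c\, \rho^{\ln(2d-1)} \le \mu(B(x,\rho)) \le C\, \rho^{\ln(2d-1)}$ for all $x \in \partial X$ and all small $\rho$. A standard Vitali-type covering comparison (e.g.\ \cite[Theorem~8.3]{coor} or the elementary argument via the $5r$-covering lemma in a metric space) then shows $\mathscr{H}^{\ln(2d-1)}$ is itself finite, positive, and comparable to $\mu$ on every Borel set, hence the two measures are mutually absolutely continuous (indeed boundedly equivalent). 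Alternatively one can note directly that on each rank-$r$ cylinder both measures are, up to the uniform constants above, $(e^{-r})^{\ln(2d-1)}$, and that cylinders generate the Borel $\sigma$-algebra.

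The only genuinely delicate point is the comparison between metric balls and cylinder sets in the visual metric: one must check that a ball $B(x,\rho)$ with $e^{-(r+1)} \le \rho < e^{-r}$ is sandwiched between the rank-$(r+O(1))$ cylinder containing $x$ and a bounded union of such cylinders. This is where the regularity of the tree (every vertex has valence $2d$, so bounded branching) is used, and it is the step I expect to require the most care, though it is entirely routine for trees. Everything else is the textbook interplay between Ahlfors regularity, the mass distribution principle, and Hausdorff measure. I would also remark that this gives an alternative, self-contained route to the equality of critical exponent and Hausdorff dimension of $\partial X$ promised before the lemma, consistent with Sullivan's \cite[Theorem~1]{haus-dim-sul} in the geometrically finite Kleinian setting.
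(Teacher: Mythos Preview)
Your proposal is correct and follows essentially the same route as the paper: establish Ahlfors $\ln(2d-1)$-regularity of $\mu$ from the explicit cylinder diameters and masses, then deduce both the Hausdorff dimension and the equivalence $\mu\asymp\mathscr{H}^{\ln(2d-1)}$. The paper compresses this by citing \cite[Proposition~4.9]{falconer} directly, and it sidesteps your ``delicate point'' entirely by first passing to the bi-Lipschitz ultra-metric $d(a,b)=e^{-|c|}$, in which a ball of radius $e^{-n}$ \emph{is} the rank-$n$ cylinder containing its center; so the ball--cylinder comparison you flag as requiring care is in fact trivial here.
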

\begin{proof}
	The visual metric (in base $e$) on $\partial X$ is bi-Lipschitz to the ultra-metric given by $d(a,b)=e^{-\vert c\vert}$, where $c$ is the bifurcation point for the geodesic rays $[1,a)$ and $[1,b)$. Hence, it suffices to compute the Hausdorff dimension of $\partial X$ with respect to this ultra-metric.
	
	We first note that the $\mu$-measure of a cylinder of rank $n$ is equal to $\frac{1}{2d(2d-1)^{n-1}}$. On the other hand, the diameter of a cylinder of rank $n$ is $e^{-n}$.  Thus, 
	$$
	\mu(B(a,e^{-n}))=\frac{1}{2d(2d-1)^{n-1}}\ \implies \mu(B(a,r)) \sim r^{\ln(2d-1)}.
	$$
	The result now follows from standard results on Hausdorff dimension (for instance, see \cite[Proposition~4.9]{falconer}). In fact, we have shown that the $\ln(2d-1)-$dimensional Hausdorff measure is positive and finite. 
	The second statement is obvious from the above proof.
\end{proof}

\subsection{Maximal entropy measure for higher Bowen-Series maps}\label{sec_meas_th_hbs}

We now carry out a similar analysis for the measure of maximal entropy of a higher Bowen-Series map of a Fuchsian punctured sphere group. For simplicity of exposition, we work with the thrice punctured sphere case. 

Let us fix the Fuchsian thrice punctured sphere group $\Gamma_0$ of Section~\ref{sec-cfm} and a (closed) fundamental domain $W$ given by the quadrilateral with vertices at the fourth roots of unity. Further let $A\equiv A_{\Gamma_0, \textrm{hBS}}:\bS^1\to\bS^1$ be the associated higher Bowen-Series map of $\Gamma_0$. As $A\vert_{\bS^1}$ is topologically conjugate to $z^4\vert_{\bS^1}$, the topological entropy of $A$ is equal to $\ln(4)$. We denote the unique measure of maximal entropy for $A\vert_{\bS^1}$ by $\nu$.

\subsubsection{Topological dynamics} As $A$ is an expansive circle covering of degree $4$, there exists a homeomorphism 
$$\phi:\bS^1\to\bS^1
$$ 
that conjugates $p:z\mapsto z^{4}$ to $A$. Using the conjugacy $\phi$, one can write the measure of maximal entropy $\nu$ for $A\vert_{\bS^1}$ as the push-forward measure $\phi_\ast m$.

\subsubsection{Symbolic dynamics} The pieces of $A$ are given by $g^{\pm 1}, h^{\pm 1}, g\circ h^{-1}, h\circ g^{-1}$. Their intervals of definition yield a Markov partition (counter-clockwise starting at $1$) for $A$ with transition matrix
$$
M:=	\begin{bmatrix}
	1&1&1&1&0&0\\
	1&1&0&0&1&1\\
	0&0&1&1&1&1\\
	1&1&1&1&0&0\\
	1&1&0&0&1&1\\
	0&0&1&1&1&1\\
\end{bmatrix}.
$$
The above transition matrix gives rise to a one-sided subshift of finite type 
$$
\sigma: \Sigma_M^+\to\Sigma_M^+,
$$ 
where $\Sigma_M^+$ consists of $M-$admissible infinite words in $\{1, 2, \cdots, 6\}^{\mathbb{N}}$, and $\sigma$ is the left-shift map. As before, we metrize $\Sigma_M^+$ with the usual ultra-metric (in base $e$). Since $A$ is expansive, one obtains a continuous surjection 
$$
\psi:\Sigma_M^+\to\bS^1
$$ 
that semi-conjugates $\sigma$ to $A$, and sends the cylinders of rank $1$ to the Markov partition pieces of $A$.

The \emph{Parry measure} (i.e., the unique measure of maximal entropy) for the $\sigma-$action on $\Sigma_M^+$ is given by the `uniform' Markov measure $\mu$, that assigns mass $\frac{1}{6\cdot 4^n}$ to each cylinder set of rank $n+1$ ($n\geq 0$). The corresponding topological entropy is also $\ln(4)$ (note that $4$ is the largest eigenvalue of $M$), and $\nu=\psi_\ast \mu$.

\subsubsection{Patterson-Sullivan measure} We now turn our attention to the the free group $F_2\cong \langle g\rangle\ast\langle h\rangle$ with the generating set $\{g^{\pm 1}, h^{\pm 1}, g\circ h^{-1}, h\circ g^{-1}\}$ (which are precisely the pieces of $A$).

We denote the Cayley graph of $F_2$ with respect to the above (non-standard) generating set by $X$, and equip it with the word metric. 
Note that we can naturally identify the Gromov boundary $\partial X$ with the shift space $\Sigma_M^+$. With this identification, the boundary at infinity of the cone at a generator is the corresponding cylinder set in $\Sigma_M^+$.

\begin{figure}[h!]
\captionsetup{width=0.96\linewidth}
	\begin{tikzpicture}
		
		\node[anchor=south west,inner sep=0] at (0,0) {\includegraphics[width=0.9\linewidth]{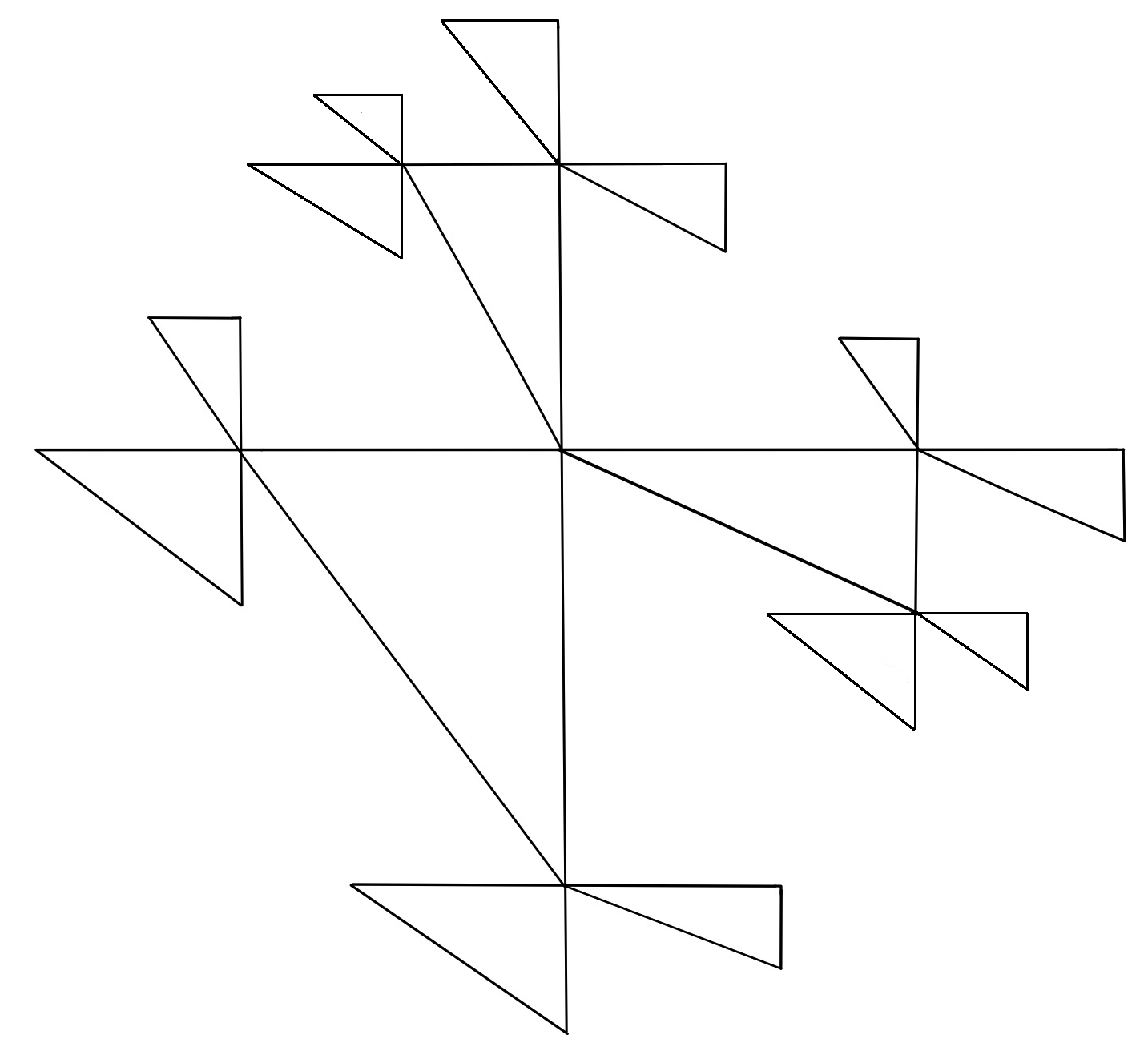}};
		\node at (5.4,5.8) {\begin{scriptsize}$1$\end{scriptsize}};
		\node at (8.84,5.8) {\begin{scriptsize}$g$\end{scriptsize}};
		\node at (9.48,4.6) {\begin{scriptsize}$gh^{-1}$\end{scriptsize}};
		\node at (7,4.3) {\begin{tiny}$gh^{-1}g^{-1}$\end{tiny}};
		\node at (9.2,3.1) {\begin{scriptsize}$gh^{-2}$\end{scriptsize}};
		\node at (10.7,4.4) {\begin{scriptsize}$gh^{-1}g$\end{scriptsize}};
		\node at (10.4,3.5) {\begin{scriptsize}$(gh^{-1})^2$\end{scriptsize}};
		\node at (2,5.8) {\begin{scriptsize}$g^{-1}$\end{scriptsize}};	
		\node at (2.84,7.3) {\begin{scriptsize}$g^{-1}h$\end{scriptsize}};	
		\node at (0.88,7.3) {\begin{tiny}$g^{-1}hg^{-1}$\end{tiny}};	
		\node at (2.4,4.2) {\begin{scriptsize}$g^{-1}h^{-1}$\end{scriptsize}};	
		\node at (5.4,8.64) {\begin{scriptsize}$h$\end{scriptsize}};
		\node at (4.4,9.04) {\begin{scriptsize}$hg^{-1}$\end{scriptsize}};
		\node at (4,9.75) {\begin{tiny}$hg^{-1}h$\end{tiny}};
		\node at (2.56,9.5) {\begin{tiny}$(hg^{-1})^2$\end{tiny}};
		\node at (4,7.7) {\begin{tiny}$hg^{-1}h^{-1}$\end{tiny}};
		\node at (2.06,8.8) {\begin{scriptsize}$hg^{-2}$\end{scriptsize}};
		\node at (7.5,8.8) {\begin{scriptsize}$hg$\end{scriptsize}};
		\node at (7.4,7.75) {\begin{scriptsize}$hgh^{-1}$\end{scriptsize}};
		\node at (5.32,1.44) {\begin{scriptsize}$h^{-1}$\end{scriptsize}};
		\node at (2.88,1.58) {\begin{scriptsize}$h^{-1}g^{-1}$\end{scriptsize}};
		\node at (8.2,1.6) {\begin{scriptsize}$h^{-1}g$\end{scriptsize}};
		\node at (7.75,0.7) {\begin{tiny}$h^{-1}gh^{-1}$\end{tiny}};
		\node at (11.1,6.3) {\begin{scriptsize}$g^2$\end{scriptsize}};
		\node at (11.2,4.9) {\begin{scriptsize}$g^2h^{-1}$\end{scriptsize}};
		\node at (0.1,5.8) {\begin{scriptsize}$g^{-2}$\end{scriptsize}};	
		\node at (5.8,10.2) {\begin{scriptsize}$h^2$\end{scriptsize}};
		\node at (4.6,10.5) {\begin{scriptsize}$h^2g^{-1}$\end{scriptsize}};
		\node at (6,0.36) {\begin{scriptsize}$h^{-2}$\end{scriptsize}};
		\node at (9.4,7.1) {\begin{scriptsize}$gh$\end{scriptsize}};
		\node at (7.8,7.1) {\begin{scriptsize}$ghg^{-1}$\end{scriptsize}};
		
	\end{tikzpicture}
	\caption{The words of length one and two in the Cayley graph of $F_2$ with respect to the generating set $\{g^{\pm 1}, h^{\pm 1}, g\circ h^{-1}, h\circ g^{-1}\}$ are displayed.}
	\label{cayley_hbs_fig}
\end{figure}

\begin{rmk}
	The higher Bowen-Series map $A$ gives rise to a Markov map $\widetilde{A}$ acting on the Gromov boundary $\partial X$ (such that $\widetilde{A}$ is orbit equivalent to the $F_2-$action on $\partial X$) in the following way: for $\alpha\in\{g^{\pm 1}, h^{\pm 1}, g\circ h^{-1}, h\circ g^{-1}\}$, the map $\widetilde{A}$ acts on the boundary at infinity of $\textrm{Cone}(\alpha)$ as $\alpha^{-1}$.
\end{rmk}

\begin{lemma}\label{ps_parry_lem_hbs}
	The Patterson-Sullivan measure on $\partial X$ (with respect to the base point $1$ and the generating set $\{g^{\pm 1}, h^{\pm 1}, g\circ h^{-1}, h\circ g^{-1}\}$) is given by the Parry measure $\mu$.
\end{lemma}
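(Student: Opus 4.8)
The plan is to follow the proof of Lemma~\ref{ps_parry_lem} almost verbatim, with $2d$ replaced by $6$ and $2d-1$ replaced by $4$; the one genuinely new ingredient is a counting statement for the (non-free) generating set $S:=\{g^{\pm 1}, h^{\pm 1}, g\circ h^{-1}, h\circ g^{-1}\}$ of $F_2$.

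First I would establish that, with respect to $S$, the group $F_2$ has exactly $6$ elements of word length $1$ and, more generally, $6\cdot 4^{\,r-1}$ elements of word length $r$ for every $r\geq 1$, and that for any $w\in F_2$ of word length $r$ the cone at $w$ meets the sphere of radius $r+k$ about $1$ in exactly $4^{k}$ points. Both facts reduce to the assertion that the language of $S$-geodesic normal forms coincides with the (finite-word) $M$-admissible language of $\Sigma_M^+$ --- precisely the Markov coding underlying the identification $\partial X\cong\Sigma_M^+$, under which cones at generators correspond to rank-$1$ cylinders. Granting this, since every row of the transition matrix $M$ sums to $4$ one has $M\mathbf 1=4\mathbf 1$, so the number of $M$-admissible words of length $r$ equals $\mathbf 1^{\mathrm T}M^{\,r-1}\mathbf 1=6\cdot 4^{\,r-1}$, and the same computation restricted to words prolonging a fixed admissible word of length $r$ gives $4^{k}$ for the $k$-step prolongations.

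Next I would introduce the approximating measures. By the previous step the critical exponent of the $F_2$-action on $X$ is $\ln 4$ and the Poincar\'e series diverges there, so (as in Lemma~\ref{ps_parry_lem}) the Patterson-Sullivan measure on $\partial X$ with base point $1$ and generating set $S$ is a weak limit of the probability measures
\[
\mu_n\;:=\;\frac{\delta_1+\sum_{j=1}^{n}\frac{1}{4^{j}}\bigl(\sum_{|w|=j}\delta_w\bigr)}{1+\sum_{j=1}^{n}\frac{6\cdot 4^{\,j-1}}{4^{j}}}\;=\;\frac{\delta_1+\sum_{j=1}^{n}\frac{1}{4^{j}}\bigl(\sum_{|w|=j}\delta_w\bigr)}{1+\tfrac{3n}{2}}\,.
\]
Then, for $w$ of word length $r$, using the cone count from the first step I would compute
\[
\mu_{n+r}\bigl(\mathrm{Cone}(w)\bigr)\;=\;\frac{\sum_{k=0}^{n}4^{k}\cdot 4^{-(r+k)}}{1+\tfrac{3(n+r)}{2}}\;=\;\frac{(n+1)\,4^{-r}}{1+\tfrac{3(n+r)}{2}}\;\xrightarrow[n\to\infty]{}\;\frac{1}{6\cdot 4^{\,r-1}}\,.
\]
Since, under $\partial X\cong\Sigma_M^+$, the cones at length-$r$ elements are exactly the rank-$r$ cylinder sets and these generate the Borel $\sigma$-algebra, it follows that the Patterson-Sullivan measure assigns mass $\tfrac{1}{6\cdot 4^{\,r-1}}$ to every rank-$r$ cylinder --- which is precisely the mass assigned by the Parry measure $\mu$ (it gives $\tfrac{1}{6\cdot 4^{n}}$ to each rank-$(n+1)$ cylinder). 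Hence the two measures agree on cylinders, and therefore coincide.

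The hard part will be the first step: because $S$ satisfies relations (e.g.\ $g\circ h^{-1}$ equals the product $g\cdot h^{-1}$), the Cayley graph $X$ is not a tree, so neither the count $6\cdot 4^{\,r-1}$ nor the $4$-ary branching of cones is automatic --- both depend on $S$-geodesics being unique and on their being exactly the $M$-admissible words. I would obtain this from the Bowen-Series/Markov theory already invoked in the setup (it is what makes the identification $\partial X\cong\Sigma_M^+$ with cones matching cylinders legitimate), perhaps after an explicit small-scale check against Figure~\ref{cayley_hbs_fig}. Once that is in place, the remaining steps are the routine computation transcribed from the proof of Lemma~\ref{ps_parry_lem}.
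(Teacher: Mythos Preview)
Your proposal is correct and follows essentially the same route as the paper: establish the sphere count $6\cdot 4^{r-1}$ for the generating set $S$, then rerun the limit computation from Lemma~\ref{ps_parry_lem} with $2d\mapsto 6$ and $2d-1\mapsto 4$ to see that the Patterson--Sullivan measure gives each rank-$r$ cylinder mass $\tfrac{1}{6\cdot 4^{r-1}}$, which is the Parry value. The paper obtains the count by direct inspection of the Cayley graph (Figure~\ref{cayley_hbs_fig}) rather than via your row-sum argument $M\mathbf 1=4\mathbf 1$, and it leaves the limit computation implicit by citing Lemma~\ref{ps_parry_lem}; your version simply makes both steps explicit, and your identification of the bijection between $S$-geodesic normal forms and $M$-admissible words as the point requiring care is exactly what the paper's ``easy to see from the generators and relations'' is asserting.
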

\begin{proof}
	It is easy to see from the generators and relations (equivalently, from the Cayley graph depicted in Figure~\ref{cayley_hbs_fig}) that the number of words of length $r$ in $F_2$ (with respect to generating set $\{g^{\pm 1}, h^{\pm 1}, g\circ h^{-1}, h\circ g^{-1}\}$) is $6\cdot 4^{r-1}$ ($r\geq 1$).
	A computation similar to the one in the proof of Lemma~\ref{ps_parry_lem} now readily shows that the Patterson-Sullivan measure in question (on $\partial X$) assigns mass $\frac{1}{6\cdot 4^{r-1}}$ to each cylinder set (in $\partial X$) of rank $r$. Thus, the Patterson-Sullivan measure agrees with $\mu$ on each cylinder set.
\end{proof}

\begin{prop}\label{mme_ps_hbs}
	The measure of maximal entropy $\nu$ of the higher Bowen-Series map $A$ is the push-forward of the Patterson-Sullivan measure $\mu$ on $\partial X$ (with respect to the base point $1$ and the generating set $\{g^{\pm 1}, h^{\pm 1}, g\circ h^{-1}, h\circ g^{-1}\}$) under $\psi$.
\end{prop}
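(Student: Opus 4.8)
The plan is to follow verbatim the argument used for the ordinary Bowen-Series map in Proposition~\ref{mme_ps}, assembling two facts that are already in place. First, from the symbolic-dynamics discussion immediately preceding this proposition, the semi-conjugacy $\psi\colon\Sigma_M^+\to\bS^1$ pushes the Parry measure $\mu$ (the uniform Markov measure of maximal entropy for $\sigma$, assigning mass $\tfrac{1}{6\cdot 4^n}$ to each cylinder of rank $n+1$) forward to $\nu$; that is, $\nu=\psi_\ast\mu$. This relies only on $\psi$ being a continuous surjective semi-conjugacy between $\sigma$ and $A$, on $A$ being expansive (so that $\psi$ does not lose entropy and $\psi_\ast\mu$ is again a measure of maximal entropy for $A$), and on the uniqueness of the measure of maximal entropy on each side. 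Second, Lemma~\ref{ps_parry_lem_hbs} identifies $\mu$, under the natural bijection $\partial X\cong\Sigma_M^+$, with the Patterson-Sullivan measure of $F_2$ based at $1$ with respect to the generating set $\{g^{\pm1},h^{\pm1},g\circ h^{-1},h\circ g^{-1}\}$. Combining the two gives $\nu=\psi_\ast\mu=\psi_\ast(\text{Patterson-Sullivan measure})$, which is exactly the assertion.

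The one point that deserves a sentence of justification --- and the closest thing to an obstacle here --- is the compatibility of the three descriptions of the same space: the shift space $\Sigma_M^+$, the Gromov boundary $\partial X$ of the Cayley graph of $F_2$ with this \emph{non-standard} generating set, and the map $\psi$. Because the generating set contains the products $g\circ h^{-1}$ and $h\circ g^{-1}$, the Cayley graph $X$ is not a tree; but it is quasi-isometric to the standard Cayley tree of $F_2$, hence Gromov-hyperbolic, and its boundary is still a Cantor set naturally coded by $M$-admissible sequences (the boundary at infinity of the cone at a generator $\alpha$ being the cylinder $[\alpha]$, as recorded just before Lemma~\ref{ps_parry_lem_hbs}). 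I would spell out that, under this coding, $\psi$ is precisely the (Floyd-)Cannon-Thurston map from $\partial X$ onto $\bS^1=\partial\disk$ induced by the quasi-isometry from $X$ to the $\Gamma_0$-tessellation of $\disk$ associated with $W$, just as in the Bowen-Series case; once that is in place the push-forward statement is a tautology.

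I do not expect any serious difficulty: the genuine content is already carried by Lemma~\ref{ps_parry_lem_hbs} and by the symbolic-dynamics paragraph, so the proof of this proposition is a two-line synthesis of the two. The only care required is bookkeeping --- verifying that the normalization of $\mu$ used in the symbolic setting ($\tfrac{1}{6\cdot 4^n}$ on rank-$(n+1)$ cylinders) coincides with the normalization produced by the Patterson-Sullivan weak limit in Lemma~\ref{ps_parry_lem_hbs} ($\tfrac{1}{6\cdot 4^{r-1}}$ on rank-$r$ cylinders), which it does upon setting $r=n+1$, so that no stray constant intervenes.
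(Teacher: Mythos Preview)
Your proposal is correct and matches the paper's approach exactly: the paper gives no explicit proof of this proposition, treating it as an immediate consequence of the identity $\nu=\psi_\ast\mu$ (established in the symbolic-dynamics paragraph) together with Lemma~\ref{ps_parry_lem_hbs}. Your additional remarks on the non-tree Cayley graph and the normalization check are reasonable clarifications, but the core two-line synthesis you describe is precisely what the paper intends.
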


\subsubsection{$\Gamma_0-$invariance of the class of $\nu$} Proposition~\ref{mme_ps_hbs}, $F_d-$invariance of the measure class of the Patterson-Sullivan measure $\mu$ (on $\partial X$) \cite[Theorem~5.4, Theorem~8.2]{coor}, and the fact that the map $\psi$ semi-conjugates the $F_d-$action on $\partial X$ to the $\Gamma_0-$action on $\bS^1$ together imply the following.

\begin{prop}\label{mme_meas_class_inv_hbs}
	For each $\gamma\in\Gamma_0$, the measures $\nu$ and $\gamma_\ast\nu$ are mutually absolutely continuous.
\end{prop}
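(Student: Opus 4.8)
The plan is to transplant the proof of Proposition~\ref{mme_meas_class_inv} to the present setting, replacing $F_d$ with the standard generating set by $F_2\cong\langle g\rangle\ast\langle h\rangle$ equipped with the non-standard generating set $\{g^{\pm 1}, h^{\pm 1}, g\circ h^{-1}, h\circ g^{-1}\}$. First I would fix $\gamma\in\Gamma_0$ and let $\widetilde\gamma\in F_2$ denote the corresponding group element under the isomorphism $\Gamma_0\cong F_2$. The structural input I would use is that the surjection $\psi\colon\Sigma_M^+\cong\partial X\to\bS^1$ semi-conjugates the $F_2$-action on $\partial X$ to the $\Gamma_0$-action on $\bS^1$, i.e.\ $\gamma\circ\psi=\psi\circ\widetilde\gamma$ on $\partial X$; this is precisely the fact recorded in the paragraph preceding the statement, and, as in the Bowen-Series case, it is the Floyd--Cannon--Thurston identification (cf.\ \cite{Floyd}).

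Granting this, I would compute, using Proposition~\ref{mme_ps_hbs},
\[
\gamma_\ast\nu=\gamma_\ast(\psi_\ast\mu)=(\gamma\circ\psi)_\ast\mu=(\psi\circ\widetilde\gamma)_\ast\mu=\psi_\ast(\widetilde\gamma_\ast\mu).
\]
Next I would invoke quasi-invariance of the Patterson--Sullivan class: since $\mu$ is a Patterson--Sullivan measure for $F_2$ acting on $\partial X$ (Lemma~\ref{ps_parry_lem_hbs}), the measures $\widetilde\gamma_\ast\mu$ and $\mu$ are mutually absolutely continuous by \cite[Theorem~5.4, Theorem~8.2]{coor}; these results do not depend on the particular finite symmetric generating set, so the non-standard one causes no difficulty. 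Finally I would use the elementary observation that push-forward preserves absolute continuity — if $\mu_1\ll\mu_2$ and $E$ is $\psi_\ast\mu_2$-null, then $\psi^{-1}(E)$ is $\mu_2$-null, hence $\mu_1$-null, so $E$ is $\psi_\ast\mu_1$-null — applied in both directions to conclude that $\nu=\psi_\ast\mu$ and $\gamma_\ast\nu=\psi_\ast(\widetilde\gamma_\ast\mu)$ are mutually absolutely continuous.

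The argument is essentially formal once the earlier lemmas are in hand, so there is no serious obstacle; the one point that deserves care is the equivariance of $\psi$, namely that under the identification $\partial X\cong\Sigma_M^+$ the $F_2$-action on the symbolic space pushes forward under $\psi$ to the $\Gamma_0$-action on $\bS^1$. This is where the choice of the non-standard generating set, whose letters are exactly the pieces of the higher Bowen-Series map $A$, is used, and it is what makes the Markov map $\widetilde A$ of the preceding remark orbit equivalent to the $F_2$-action on $\partial X$. Note that, in contrast with Proposition~\ref{not_abs_cont_lem}, nothing here asserts comparability of $\nu$ with Lebesgue measure; the Patterson--Sullivan machinery delivers exactly the $\Gamma_0$-invariance of the measure class of $\nu$ and nothing more.
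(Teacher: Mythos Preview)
Your proposal is correct and follows exactly the approach the paper takes: the paper states (in the sentence immediately preceding the proposition) that the result follows from Proposition~\ref{mme_ps_hbs}, the $F_2$-invariance of the Patterson--Sullivan measure class on $\partial X$ via \cite[Theorems~5.4, 8.2]{coor}, and the fact that $\psi$ semi-conjugates the $F_2$-action on $\partial X$ to the $\Gamma_0$-action on $\bS^1$. You have simply written out this argument in full detail, including the explicit push-forward computation and the elementary verification that push-forward preserves absolute continuity.
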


\subsubsection{MME for mating} According to Theorem~\ref{thm-cfdmateable}, there exists a conformal mating of the canonical extension $\widehat{A}$ of the higher Bowen-Series map $A$ (associated with the closed fundamental domain $W$ of $\Gamma_0$) and the polynomial map $z^{4}$ such that the restriction of this conformal mating on its Jordan curve limit set is topologically conjugate to $A\vert_{\bS^1}$.
The interpretation of the measure $\nu$ in terms of $m$ and $\mu$ implies the following.

\begin{prop}\label{mating_meas_max_ent_hbs}
	The unique measure of maximal entropy of the conformal mating of $\widehat{A}$ and $z^4$ restricted to the limit set is equal to the push-forward of the MME of $z^{4}$ on $\bS^1$ as well as the push-forward of the Patterson-Sullivan measure on $\partial X$ (with respect to the base point $1$ and the generating set $\{g^{\pm 1}, h^{\pm 1}, g\circ h^{-1}, h\circ g^{-1}\}$) under appropriate conjugacies. In particular, the corresponding topological entropy is $\ln(4)$.
\end{prop}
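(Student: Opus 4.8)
The plan is to derive Proposition~\ref{mating_meas_max_ent_hbs} from Theorem~\ref{thm-cfdmateable} together with the symbolic and Patterson--Sullivan descriptions of $\nu$ established above in the thrice-punctured sphere case, in exact parallel with the derivation of Proposition~\ref{mating_meas_max_ent} from Propositions~\ref{mme_leb} and~\ref{mme_ps}.

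First I would unpack the conformal mating furnished by Theorem~\ref{thm-cfdmateable} for $P=z^4\in\mathcal{H}_4$: it produces a holomorphic map $F$, a Jordan curve $\mathfrak{J}$ with complementary components $\mathbf{D}^\textrm{in},\mathbf{D}^\textrm{out}$, and conformal maps $\phi^\textrm{in}:\overline{\disk}\to\overline{\mathbf{D}^\textrm{in}}$ and $\phi^\textrm{out}:\mathcal{K}(z^4)=\overline{\disk}\to\overline{\mathbf{D}^\textrm{out}}$ conjugating $\widehat{A}$, resp.\ $z^4$, to $F$. Since $\mathfrak{J}$ is the common boundary of $\mathbf{D}^\textrm{in}$ and $\mathbf{D}^\textrm{out}$, since $\widehat{A}$ agrees with $A$ on $\bS^1=\partial\disk$, and since $\mathcal{J}(z^4)=\bS^1$, restricting to the boundary circles yields $\mathfrak{J}=\phi^\textrm{in}(\bS^1)=\phi^\textrm{out}(\bS^1)$ together with two topological conjugacies $\phi^\textrm{in}\vert_{\bS^1}\colon(\bS^1,A)\to(\mathfrak{J},F)$ and $\phi^\textrm{out}\vert_{\bS^1}\colon(\bS^1,z^4)\to(\mathfrak{J},F)$.

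Next, $F\vert_{\mathfrak{J}}$ is topologically conjugate to the expanding degree-$4$ circle map $z^4\vert_{\bS^1}$, hence is an expansive covering of degree $4$ and therefore admits a unique measure of maximal entropy $\nu_{\mathrm{mat}}$, with $h_{\mathrm{top}}(F\vert_{\mathfrak{J}})=h_{\mathrm{top}}(z^4\vert_{\bS^1})=\ln 4$ attained by $\nu_{\mathrm{mat}}$ (uniqueness as in \cite{Lyubich_mme}). As the measure of maximal entropy is a topological-conjugacy invariant, the two conjugacies above give $(\phi^\textrm{out}\vert_{\bS^1})_\ast m=\nu_{\mathrm{mat}}=(\phi^\textrm{in}\vert_{\bS^1})_\ast\nu$, where $m$ is the Haar measure on $\bS^1$ (the MME of $z^4\vert_{\bS^1}$) and $\nu$ is the MME of $A\vert_{\bS^1}$. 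Invoking Proposition~\ref{mme_ps_hbs} to write $\nu=\psi_\ast\mu$, with $\mu$ the Patterson--Sullivan measure on $\partial X$ for the generating set $\{g^{\pm 1}, h^{\pm 1}, g\circ h^{-1}, h\circ g^{-1}\}$, we obtain $\nu_{\mathrm{mat}}=(\phi^\textrm{in}\circ\psi)_\ast\mu$, which is precisely the asserted description, and the value $\ln 4$ of the entropy falls out of the preceding line.

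There is no deep obstacle here---the content is bookkeeping---but two points warrant care. One must verify that the boundary restrictions $\phi^\textrm{in}\vert_{\bS^1}$ and $\phi^\textrm{out}\vert_{\bS^1}$ genuinely conjugate the circle dynamics, which is immediate from the definition of conformal mateability once one records that $\mathfrak{J}$ is the common boundary of $\mathbf{D}^\textrm{in},\mathbf{D}^\textrm{out}$ and that $\widehat{A}$ restricts to $A$ on $\bS^1$; and one must keep in mind that $\psi$ is only a semiconjugacy, so the identity $\nu=\psi_\ast\mu$ is not formal but is exactly the content of Proposition~\ref{mme_ps_hbs} (hence the slightly loose phrase ``appropriate conjugacies'' in the statement). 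Finally, I would note that the same argument applies verbatim with $A_{\Gamma_0,\mathrm{cfm}}$ in place of $A$, since the two maps coincide on $\bS^1$.
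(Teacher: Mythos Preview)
Your proposal is correct and follows the same route the paper intends: the paper itself gives no detailed argument, merely recording that the proposition is an immediate consequence of the interpretation of $\nu$ in terms of $m$ and $\mu$ (i.e., the higher Bowen--Series analogues of Propositions~\ref{mme_leb} and~\ref{mme_ps}) together with Theorem~\ref{thm-cfdmateable}. You have simply spelled out the bookkeeping that the paper leaves implicit.
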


\subsubsection{Topological entropy, volume entropy, and Hausdorff dimension}

Since there are $6\cdot 4^{r-1}$ words of length $r$ in $F_2$ (with respect to generating set $\{g^{\pm 1}, h^{\pm 1}, g\circ h^{-1}, h\circ g^{-1}\}$), the arguments used in the proof of Lemmas~\ref{vol_entropy_lem},~\ref{hd_bdry_lem} apply mutatis mutandis to the current setting and prove equality of volume entropy, critical exponent, and Hausdorff dimension of the Gromov boundary. Moreover, this number coincides with the topological entropy of $A\vert_{\bS^1}$.

\begin{lemma}\label{te_ve_ce_hd}
	\noindent\begin{enumerate}
		\item The volume entropy of $F_2$ with respect to the generating set $\{g^{\pm 1}, h^{\pm 1}, g\circ h^{-1}, h\circ g^{-1}\}$ and the critical exponent for the $F_2-$action on $X$ are both equal to $\ln(4)$.
		\item The Hausdorff dimension of $\partial X$ equipped with the visual metric (in base $e$) with respect to the generating set $\{g^{\pm 1}, h^{\pm 1}, g\circ h^{-1}, h\circ g^{-1}\}$ is equal to $\ln(4)$. Moreover, the $\ln(4)-$dimensional Hausdorff measure $\mathscr{H}^{\ln(4)}$ and $\mu$ (on $\partial X$) are mutually absolutely continuous.
	\end{enumerate}
\end{lemma}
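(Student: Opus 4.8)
The plan is to mirror, with the substitutions $2d\to 6$ and $2d-1\to 4$, the proofs of Lemma~\ref{vol_entropy_lem} and Lemma~\ref{hd_bdry_lem}. The one combinatorial input legitimizing the substitution is the count established in the course of proving Lemma~\ref{ps_parry_lem_hbs} (and read off from Figure~\ref{cayley_hbs_fig}): for $r\geq 1$ there are exactly $6\cdot 4^{r-1}$ elements of $F_2$ of word length $r$ with respect to the generating set $S:=\{g^{\pm 1}, h^{\pm 1}, g\circ h^{-1}, h\circ g^{-1}\}$, equivalently exactly $6\cdot 4^{r-1}$ $M$-admissible words of length $r$. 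Since $S$ is symmetric, volume entropy and critical exponent are the quantities in item (1).

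For item (1) I would first sum the count to get
\[
\#\{w\in F_2:\ |w|\leq n\}=1+6\sum_{r=1}^n 4^{r-1}=1+2(4^n-1),
\]
so $\tfrac1n\ln\#\{w:|w|\leq n\}\to\ln 4$, which is the volume entropy. For the critical exponent, form the Poincar\'e series
\[
\zeta_{F_2}(s)=\sum_{w\in F_2}e^{-s|w|}=1+\frac{6}{4}\sum_{n=1}^\infty\left(\frac{4}{e^s}\right)^n,
\]
which converges precisely when $e^s>4$, i.e.\ $s>\ln 4$; hence the critical exponent of the $F_2$-action on $X$ equals $\ln 4$.

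For item (2) I would use that the visual metric (base $e$) on $\partial X$ is bi-Lipschitz to the ultrametric $\rho(a,b)=e^{-n}$, where $n$ is the length of the longest common prefix of the $\Sigma_M^+$-codes of $a$ and $b$; this uses that $M$-admissible words are geodesic words in $(F_2,S)$ and that, up to a bounded additive error, the Gromov product $(a\mid b)_1$ equals that common-prefix length (equivalently, the boundary at infinity of the cone at an admissible word of length $n$ is the corresponding rank-$n$ cylinder, as noted just before Lemma~\ref{ps_parry_lem_hbs}). A rank-$n$ cylinder then has $\rho$-diameter $e^{-n}$ and, by the description of the Parry measure, $\mu$-mass $\tfrac{1}{6\cdot 4^{n-1}}$, so $\mu(B(a,r))\asymp r^{\ln 4}$ for every $a\in\partial X$ and small $r>0$. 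The mass distribution principle (e.g.\ \cite[Proposition~4.9]{falconer}) then yields $\dim_H\partial X=\ln 4$ and that $\mathscr{H}^{\ln 4}$ is positive and finite on $\partial X$; comparing it cylinder-by-cylinder with $\mu$ shows the two measures are mutually absolutely continuous. Finally, $\ln 4$ coincides with the topological entropy of $A\vert_{\bS^1}$ because $A\vert_{\bS^1}$ is topologically conjugate to $z^4\vert_{\bS^1}$.

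The only point requiring care, as opposed to the standard-generating-set case of Lemma~\ref{hd_bdry_lem}, is that the Cayley graph $X$ of $F_2$ with respect to $S$ is \emph{not} a tree (there are relations among $g$, $h$, $gh^{-1}$, $hg^{-1}$), so the ``bifurcation point'' of two geodesic rays is not literally well defined; one must instead argue at the level of the Markov coding, i.e.\ verify that admissible words are geodesics and that the cone/cylinder correspondence controls the Gromov product up to a bounded constant. Once that is in place — and it is essentially the content of the identification $\partial X\cong\Sigma_M^+$ already invoked above — the remaining steps are the routine computations sketched here.
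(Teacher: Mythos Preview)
Your proposal is correct and follows exactly the approach the paper takes: the paper's entire argument is the sentence preceding the lemma, which says that the proofs of Lemmas~\ref{vol_entropy_lem} and~\ref{hd_bdry_lem} apply \emph{mutatis mutandis} once one knows there are $6\cdot 4^{r-1}$ words of length $r$. Your explicit computations for volume entropy, the Poincar\'e series, and the cylinder mass/diameter comparison are precisely those substitutions carried out.

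Your closing paragraph is a genuine improvement over the paper's treatment. The paper's proof of Lemma~\ref{hd_bdry_lem} uses that $X$ is a tree (so that the bifurcation point $c$ of two geodesic rays is well defined), and then simply asserts that the same argument works here; you correctly flag that for the non-standard generating set $S$ the Cayley graph has triangles (e.g.\ $1$, $g$, $gh^{-1}$), so one must instead pass through the identification $\partial X\cong\Sigma_M^+$ and check that $M$-admissible words are geodesic representatives, whence the Gromov product agrees with common-prefix length up to a bounded constant. This is exactly the right fix, and it is implicit in the paper's earlier identification of cone boundaries with cylinder sets.
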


\subsection{Hausdorff dimension of measure of maximal entropy: thrice punctures sphere}\label{hd_computations_subsec}

Recall from Proposition~\ref{not_abs_cont_lem} that the measure of maximal entropy of the Bowen-Series map of a Fuchsian punctured sphere group does not lie in the Lebesgue measure class. In this section, we will prove a sharper version of this fact in the thrice punctured sphere case. 

Specifically, we will show that the Hausdorff dimension 
$$
\mathrm{HD}(\nu):= \mathrm{inf} \{\mathrm{HD}(Y): Y\subset \bS^1,\ \nu(Y)=1\}
$$
of the MME $\nu$ of the (higher) Bowen-Series map of a Fuchsian thrice punctured sphere group is strictly less than $1$. This statement should be compared with the analogous result that except for some very special cases, the Hausdorff dimension of the measure of maximal entropy of a rational map is strictly smaller than the Hausdorff dimension of the Julia set \cite{Zdunik}.

In what follows, we will cook up a self-map of the interval $[0,1]$ from the (higher) Bowen-Series map under consideration, and relate the Hausdorff dimension of the MME of the (higher) Bowen-Series map to that of the MME of the associated self-map. This will allow us to obtain the desired upper bounds. We note that while this intermediate step is not essential for the Bowen-Series map (in this case, one can obtain the upper bound of Proposition~\ref{hd_less_than_one_prop} by working directly with the Bowen-Series map), this method yields additional information in the higher Bowen-Series case. Indeed, this reduction step connects the MME of the higher Bowen-Series map to a classical measure from number theory, which makes known results applicable to the current setting and gives a better estimate for the Hausdorff dimension of the MME.

\subsubsection{The Bowen-Series case}\label{bs_case_subsubsec}
Since the Teichm{\"u}ller space of a thrice punctured sphere is a singleton, we may, without loss of generality, work with the Bowen-Series map $A_{G_2}$ introduced in Subsection~\ref{b_s_punc_sphere_subsec}. Note that every non-identity element $g$ in the free group $G_2$ admits a unique shortest representation with respect to the symmetric generating set $\{g_1^{\pm 1}, g_2^{\pm 1}\}$ introduced in Subsection~\ref{b_s_punc_sphere_subsec}. The length of this shortest representation is called the \emph{length} of $g$ (the length of the identity element is defined to be zero). Recall that the ideal polygon in $\mathbb{D}$ with vertices at the fourth roots of unity is a fundamental domain for the $G_2-$action on $\disk$, and hence its translates under elements of $G_2$ yield a tiling $\mathcal{T}_{G_2}$ of $\disk$. We call this fundamental domain the \emph{rank $0$ tile} and its translate under an element $g\in G_2$ of length $k$ a \emph{rank $k$ tile} for the above tessellation. 

We will use a specific symmetric property of the tessellation $\mathcal{T}_{G_2}$ which we now describe. Following Section~\ref{bs_sec}, we denote the ideal polygon in $\disk$ with vertices at the fourth roots of unity by $R$, and its edges by $C_{\pm 1}, C_{\pm 2}$. Let us further denote the anti-M{\"o}bius reflections in these edges by $\rho_{\pm 1},\rho_{\pm 2}$, and the reflection group generated by $\rho_{\pm 1},\rho_{\pm 2}$ by $\mathscr{G}$. Note that the polygon $R$ is invariant under the actions of $\mathscr{R}$ and $\iota$, where $\mathscr{R}$ is rotation by angle $\pi/2$ and $\iota$ is the reflection in the real axis. It follows that conjugation by $\mathscr{R}$ and $\iota$ act as permutations on the generating set $\{\rho_{\pm 1}, \rho_{\pm 2}\}$ of $\mathscr{G}$, and hence $\mathscr{R}$ and $\iota$ conjugate $\mathscr{G}$ to itself. Hence, the $\mathscr{G}-$tessellation $\mathcal{T}_{\mathscr{G}}$ of $\disk$ arising from the fundamental domain $\overline{R}$ (closure taken in $\disk$) is preserved by both $\mathscr{R}$ and $\iota$. Note furthermore that the relations $g_i=\iota\circ\rho_i=\rho_{-i}\circ\iota$, $i\in\{1,2\}$, and $\iota-$invariance of the tessellation $\mathcal{T}_{\mathscr{G}}$ imply that the tessellations $\mathcal{T}_{\mathscr{G}}$ and $\mathcal{T}_{G_2}$ are the same. It follows that the tessellation $\mathcal{T}_{G_2}$ of $\mathbb{D}$ is symmetric with respect to $\pi/2-$rotation $\mathscr{R}$.

For the current purpose, it will be more convenient to work with the upper half-plane model. To this end, consider the M\"obius transformation $M(z)=i(1-z)/(1+z)$ which carries the unit disk onto the upper half-plane such that $M(1)=0, M(i)=1, M(-1)=\infty$, and $M(-i)=-1$. Hence, $M$ sends the ideal polygon in $\mathbb{D}$ with vertices at the fourth roots of unity to the ideal polygon in $\mathbb{H}$ with vertices at $-1, 0, 1$, and $\infty$.
The map $M$ conjugates $G_2$ to a discrete subgroup $\pmb{G}_2$ of $\mathrm{PSL}_2(\R)$, and transports the $G_2-$tessellation of $\disk$ defined in the previous paragraph to a $\pmb{G}_2-$tessellation of $\mathbb{H}$. One defines tiles of this tessellation and their ranks as in the previous paragraph. Moreover, $M$ conjugates the Bowen-Series map $A_{G_2}$ to the map
$$
\tau:\R\cup\{\infty\}\to\R\cup\{\infty\},\qquad 
\tau(t)= \left\{\begin{array}{ll}
                    \vspace{1mm}

                    t+2, \qquad t\in\left[-\infty,-1\right],  \\
                    \vspace{2mm}
                    
                     \frac{t}{1+2t}, \qquad t\in\left[-1,0\right],  \\
                    \vspace{2mm}
                    
                     \frac{t}{1-2t}, \qquad t\in\left[0,1\right],\\
                     \vspace{2mm}

                     t-2, \qquad t\in\left[1,+\infty\right].
                                          \end{array}\right. 
$$
By construction, $\tau$ maps $[0,\frac13]$ to $[0,1]$, $[\frac13,\frac12]$ to $[1,+\infty]$, and $[\frac12,1]$ to $[-\infty,-1]$ (see Figure~\ref{bs_uhp_fig}).

Since Euclidean isometric rotation $\mathscr{R}:z\mapsto iz$ (about the origin) respects the $G_2-$~tessellation of $\mathbb{D}$, it follows that the conformal rotation $\mathscr{R}_{\mathbb{H}}(w)= M(i\cdot M^{-1}(w))=\frac{1+w}{1-w}\in\mathrm{PSL}_2(\R)$ (about $i$) respects the corresponding $\pmb{G}_2-$tessellation of $\mathbb{H}$.

This allows one to construct a self-map of $[0,1)$ associated with $\tau$:
 $$F:[0,1)\to[0,1),\qquad \displaystyle F(t)= \left\{\begin{array}{ll}
                   \vspace{2mm}

                   \hspace{6mm}  \tau(t) \hspace{8.4mm} = \hspace{2mm} \frac{x}{1-2x} ,\qquad t\in\left[0,\frac13\right),  \\
                    \vspace{2mm}

                    \left(\mathscr{R}_{\mathbb{H}}^{-1}\circ\tau\right)(t) \hspace{0.8mm} =  \hspace{2mm} \frac{3x-1}{1-x}, \qquad t\in\left[\frac13,\frac12\right),  \\
                     \vspace{2mm}

                    \left(\mathscr{R}_{\mathbb{H}}^{ 2}\circ\tau\right)(t) \hspace{1.6mm} = \hspace{2mm} \frac{2x-1}{x}, \qquad t\in\left[\frac12,1\right).
                                          \end{array}\right. 
$$
(See Figure~\ref{first_return_derivative_fig}.) The symmetry of the $\pmb{G}_2$-tessellation of $\mathbb{H}$ under the conformal rotation $\mathscr{R}_{\mathbb{H}}$ implies that $F$ sends the ideal vertices of tiles of a given rank to the ideal vertices of tiles of the previous rank.

\begin{figure}[h!]	
\captionsetup{width=0.96\linewidth}
	\begin{tikzpicture}
		\node[anchor=south west,inner sep=0] at (0,0) {\includegraphics[width=0.46\linewidth]{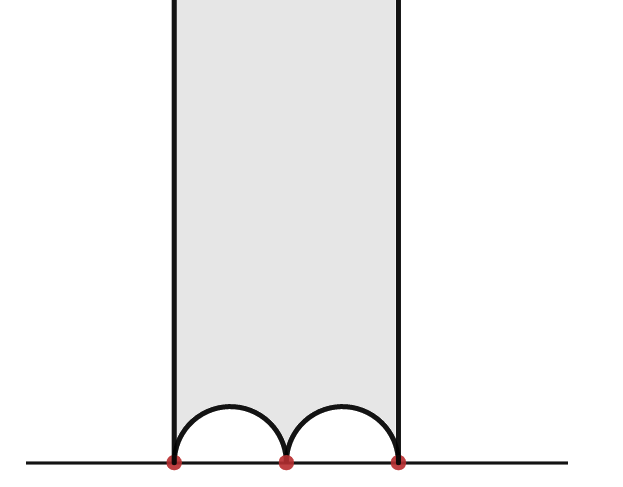}};
		\node[anchor=south west,inner sep=0] at (6,-0.1) {\includegraphics[width=0.46\linewidth]{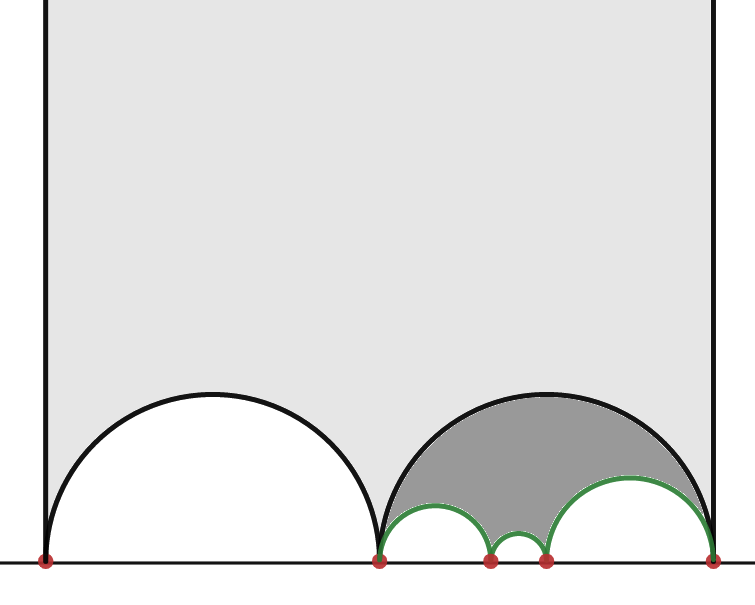}};
		\node at (1.5,-0.1) {$-1$};
		\node at (2.7,-0.1) {$0$};
		\node at (3.75,-0.1) {$1$};
		\node at (6.4,-0.1) {$-1$};
		\node at (8.94,-0.1) {$0$};
		\node at (9.8,-0.16) {$\frac13$};
		\node at (10.3,-0.16) {$\frac12$};
		\node at (11.5,-0.1) {$1$};
			\end{tikzpicture}
	\caption{Left: A fundamental polygon of the thrice punctured sphere Fuchsian group $\pmb{G}_2=M\circ G_2\circ M^{-1}$ with ideal vertices at $0, \pm 1$, and $\infty$. Right: A tile of rank one with ideal vertices at $0, \frac12, \frac13$, and $1$ is shown.}
	\label{bs_uhp_fig}
\end{figure}

Let $\phi:\mathbb{S}^1\to\mathbb{S}^1$ be the homeomorphism conjugating $z^3$ to the Bowen-Series map $A_{G_2}$ with $\phi(1)=1$. As the chosen fundamental domain of $A_{G_2}$ is symmetric under rotation by $\pi/2$, one readily sees that the map $\phi$ commutes with $z\mapsto iz$. Using this, it is straightforward to verify that the tripling map $\times_{3}:[0,1)\to[0,1)$ 
$$
\times_{3}(x)=\left\{\begin{array}{ll}
                     \vspace{2mm}

                     3x, \hspace{12mm} x\in\left[0,\frac13\right),  \\
                       \vspace{2mm}

                     3x-1, \hspace{6mm} x\in\left[\frac13,\frac23\right),\\
                       \vspace{2mm}

                     3x-2, \hspace{6mm} x\in\left[\frac23,1\right),
                                          \end{array}\right.
$$
is topologically conjugate to $F$ via 
$$
H:[0,1]\to[0,1],\ x\mapsto M(\phi(E(x))),\ \mathrm{where}\ E(x)=e^{2\pi i\frac{x}{4}} .
$$
Due to the conjugation property, the homeomorphism $H$ sends the rational numbers $k/3^n$ (which are the $n$-th preimages of $0$ under $\times_{3}$) to the ideal vertices of tiles of rank $n$ (which are the $n$-th preimages of $0$ under $F$).

Also note that the Lebesgue measure $\overline{m}$ on $[0,1]$ is the measure of maximal entropy for $\times_{3}$, and hence, 
\begin{equation}
\nu':=H_\ast \overline{m}
\label{nu_dash_def}
\end{equation}
is the measure of maximal entropy for $F$.
\medskip

\begin{lemma}\label{hd_equal_lem} 
Let $\nu, \nu'$ be the measures of maximal entropy of the Bowen-Series map $A_{G_2}$ and the map $F$, respectively. Then, 
$\mathrm{HD}(\nu') = \mathrm{HD}(\nu)$.
\end{lemma}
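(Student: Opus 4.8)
The plan is to recognize $\nu'$ as, up to a harmless constant, a Möbius change of coordinates applied to the restriction of $\nu$ to one quarter-circle of $\mathbb{S}^1$, and then to reduce the equality of Hausdorff dimensions to two soft facts: Möbius maps preserve Hausdorff dimension, and restricting the rotation-invariant measure $\nu$ to a quarter-circle does not lower its dimension.

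First I would unwind the definition $H=M\circ\phi\circ E$, with $E(x)=e^{2\pi i x/4}$. The map $E$ parametrizes the arc $J\subset\mathbb{S}^1$ running counterclockwise from $1$ (at $x=0$) to $i$ (at $x=1$) proportionally to arc length, so $E_\ast\overline{m}$ is the uniform probability measure on $J$; equivalently, $E_\ast\overline{m}=4\,m\vert_J$, where $m$ denotes normalized Haar measure. Since $\phi$ commutes with $\mathscr{R}\colon z\mapsto iz$ (as observed in Subsection~\ref{bs_case_subsubsec}) and satisfies $\phi(1)=1$, it also fixes $i$ and carries $J$ homeomorphically onto $J$; hence $\phi_\ast(E_\ast\overline{m})=4\,(\phi_\ast m)\vert_{J}=4\,\nu\vert_{J}$ by Proposition~\ref{mme_leb}. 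Pushing forward by $M$ and using \eqref{nu_dash_def}, we obtain $\nu'=H_\ast\overline{m}=4\,M_\ast\!\big(\nu\vert_{J}\big)$.

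Next, $M(z)=i(1-z)/(1+z)$ has its only pole at $z=-1\notin J$, so $M\vert_J$ is a diffeomorphism onto $[0,1]$ with nonvanishing derivative, hence bi-Lipschitz; therefore $M_\ast$ preserves Hausdorff dimension of measures, and multiplying a measure by a positive constant does not change its dimension, so $\mathrm{HD}(\nu')=\mathrm{HD}(\nu\vert_J)$. It then remains to prove $\mathrm{HD}(\nu\vert_J)=\mathrm{HD}(\nu)$. For this I would use the commutation $\phi\circ\mathscr{R}=\mathscr{R}\circ\phi$ together with $\mathscr{R}$-invariance of $m$ to conclude that $\mathscr{R}_\ast\nu=\nu$ (and $\nu$ is non-atomic, being a homeomorphic image of Haar measure). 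Write $\mathbb{S}^1=\bigcup_{k=0}^{3}\mathscr{R}^{k}(J)$. One inequality is immediate: any $\nu$-full set $Y$ meets $J$ in a $\nu\vert_J$-full set with $\mathrm{HD}(Y\cap J)\le\mathrm{HD}(Y)$, so $\mathrm{HD}(\nu\vert_J)\le\mathrm{HD}(\nu)$. For the reverse, given a $\nu\vert_J$-full set $Y$, set $\widetilde Y=\bigcup_{k=0}^{3}\mathscr{R}^{k}(Y)$; then $\mathscr{R}$-invariance of $\nu$ forces $\nu\!\big(\mathscr{R}^{k}(J)\setminus\widetilde Y\big)\le\nu\!\big(\mathscr{R}^{k}(J\setminus Y)\big)=\nu(J\setminus Y)=0$ for each $k$, so $\widetilde Y$ is $\nu$-full, while $\mathrm{HD}(\widetilde Y)=\max_{k}\mathrm{HD}\!\big(\mathscr{R}^{k}(Y)\big)=\mathrm{HD}(Y)$ because $\mathscr{R}$ is an isometry and Hausdorff dimension is stable under finite unions. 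Combining the two inequalities yields $\mathrm{HD}(\nu\vert_J)=\mathrm{HD}(\nu)$, and therefore $\mathrm{HD}(\nu')=\mathrm{HD}(\nu)$.

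The one step that requires care is the identification $\nu'=4\,M_\ast(\nu\vert_J)$, which hinges on the commutation $\phi\circ\mathscr{R}=\mathscr{R}\circ\phi$; this in turn reflects the $\pi/2$-rotational symmetry of the preferred fundamental domain of $A_{G_2}$. The remaining ingredients are standard measure-theoretic bookkeeping, so I do not anticipate a genuine obstacle.
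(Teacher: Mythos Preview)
Your proof is correct and follows essentially the same approach as the paper: both arguments use that $M$ is bi-Lipschitz on the first-quadrant arc $J$, that $\phi$ commutes with the rotation $\mathscr{R}$ (hence $\phi(J)=J$ and $\nu$ is $\mathscr{R}$-invariant), and a symmetrization under $\mathscr{R}$ to pass between $\nu$-full sets and $\nu\vert_J$-full sets. Your presentation is slightly more conceptual in first identifying $\nu'=4\,M_\ast(\nu\vert_J)$ and then reducing to $\mathrm{HD}(\nu\vert_J)=\mathrm{HD}(\nu)$, whereas the paper establishes the two inequalities directly from the definition of $\mathrm{HD}$ of a measure, but the substantive steps are identical.
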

\begin{proof}
We first observe that the M{\"o}bius map $M:\{e^{i\theta}: \theta\in[0,\pi/2]\}\to [0,1]$ is bi-Lipschitz, and hence preserves Hausdorff dimension (this can, for instance, be deduced from the fact that $M(e^{i\theta})=\tan(\frac{\theta}{2})$). Thus, by definition of $\nu'$ (see Equation~\ref{nu_dash_def}), we have
$$
\mathrm{HD}(\nu')= \mathrm{HD}((\phi\circ E)_\ast \overline{m}).
$$ 
Note that the measure $E_\ast(\overline{m})$ is simply the normalized Lebesgue measure on the arc $\{e^{i\theta}: \theta\in [0,\pi/2]\}\subset\mathbb{S}^1$.

Now choose $A\subset\mathbb{S}^1$ with $\nu(A)=1$, and set $A':=A\cap \{e^{i\theta}: \theta\in [0,\pi/2]\}$. By definition, the set $\phi^{-1}(A)$ has full measure with respect to the Haar measure $m$ on $\mathbb{S}^1$, and hence, $m(\phi^{-1}(A'))=1/4$ (here we have used the fact that $\phi$ maps the first quadrant of $\mathbb{S}^1$ to itself). This implies that $\phi^{-1}(A')$ is a full measure set with respect to $E_\ast \overline{m}$, and thus in turn $A'$ is a full measure set with respect to $(\phi\circ E)_\ast \overline{m}$. Therefore, 
$$
\mathrm{HD}((\phi\circ E)_\ast\overline{m})\leq \mathrm{HD}(A')\leq \mathrm{HD}(A).
$$ 
Taking the infimum over all full $\nu$-measure subsets $A$ of $\mathbb{S}^1$, we conclude that 
$$
\mathrm{HD}(\nu')= \mathrm{HD}((\phi\circ E)_\ast\overline{m})\leq \mathrm{HD}(\nu).
$$

For the opposite inequality, pick $A'\subset \{e^{i\theta}: \theta\in [0,\pi/2]\}$ with full $(\phi\circ E)_\ast \overline{m}$-measure. Define $A$ to be the symmmetrization of $A'$ under rotation by $\pi/2$. As $\phi^{-1}(A')$ has full measure with respect to $E_\ast \overline{m}$, we have that $m(\phi^{-1}(A'))=1/4$. Since $\phi$ commutes with multiplication by $i$, it now follows that $m(\phi^{-1}(A))=1$; i.e., $A$ has $\nu$-measure $1$. Therefore, 
$$
\mathrm{HD}(\nu)\leq \mathrm{HD}(A) = \mathrm{HD}(A')
$$
Finally, taking the infimum over all full $(\phi\circ E)_\ast \overline{m}$-measure subsets $A'$ of $\{e^{i\theta}: \theta\in [0,\pi/2]\}$, we have that
$$
\mathrm{HD}(\nu)\leq \mathrm{HD}((\phi\circ E)_\ast \overline{m}) = \mathrm{HD}(\nu').
$$
\end{proof}



\begin{prop}\label{hd_less_than_one_prop}
Let $\nu$ be the measure of maximal entropy of the Bowen-Series map $A_{G_2}$. Then, $\mathrm{HD}(\nu) < 1$.
\end{prop}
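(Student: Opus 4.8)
The plan is to pass, via Lemma~\ref{hd_equal_lem}, from $\mathrm{HD}(\nu)$ to the Hausdorff dimension of the measure of maximal entropy $\nu'$ of the interval map $F\colon[0,1)\to[0,1)$, and then run the thermodynamic formalism for one-dimensional maps carrying an indifferent fixed point. First I would record the relevant structure of $F$: it is piecewise M\"obius (hence piecewise real-analytic), Markov with three full branches, and has a single indifferent fixed point, namely $0\sim 1$, where a one-line expansion gives $F(t)=t+2t^2+O(t^3)$ and $|F'|\equiv 1$, while on the rest of $[0,1)$ one computes $1<|F'|\le 9$. Since $\nu'=H_\ast\overline m$ is the push-forward of Lebesgue measure under the topological conjugacy $H$ between $\times_3$ and $F$, it is ergodic, non-atomic and $F$-invariant, with entropy $h_{\nu'}(F)=h_{\overline m}(\times_3)=\ln 3>0$; and its Lyapunov exponent $\lambda_{\nu'}:=\int_0^1\log|F'|\,d\nu'$ lies in $(0,\ln 9]$ (finite because $|F'|$ is bounded, positive because $\log|F'|\ge 0$ vanishes only at the non-atomic parabolic point).

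Next I would invoke the dimension formula: for an ergodic invariant measure of a piecewise-analytic Markov interval map with an indifferent fixed point, as long as the measure has positive entropy and positive finite Lyapunov exponent, its Hausdorff dimension equals $h/\lambda$ (the Ledrappier--Ma\~n\'e formula, in the version adapted to maps with indifferent periodic points). This gives
\[
\mathrm{HD}(\nu')=\frac{h_{\nu'}(F)}{\lambda_{\nu'}}=\frac{\ln 3}{\lambda_{\nu'}}.
\]
Ruelle's inequality (applicable since $|F'|\ge 1$) yields $\lambda_{\nu'}\ge h_{\nu'}=\ln 3$, hence $\mathrm{HD}(\nu')\le 1$; the crux is to upgrade this to a strict inequality $\lambda_{\nu'}>\ln 3$.

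For the strict inequality I would argue by contradiction. If $\lambda_{\nu'}=\ln 3=h_{\nu'}$, then $\nu'$ is an $F$-invariant probability measure saturating the Rokhlin entropy formula $h_{\nu'}=\int\log|F'|\,d\nu'$. For one-dimensional maps an ergodic invariant probability measure with positive Lyapunov exponent that saturates Rokhlin's formula must be absolutely continuous with respect to Lebesgue measure (Ledrappier). But $F$ has a parabolic fixed point of Pomeau--Manneville type $t\mapsto t+2t^2+\cdots$, so every $\sigma$-finite $F$-invariant measure in the Lebesgue class has density $\asymp 1/t$ near $0$ and is therefore infinite; in particular $F$ admits no absolutely continuous invariant \emph{probability} measure. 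This contradiction forces $\lambda_{\nu'}>\ln 3$, whence $\mathrm{HD}(\nu')=\ln 3/\lambda_{\nu'}<1$, and by Lemma~\ref{hd_equal_lem} we conclude $\mathrm{HD}(\nu)<1$. (Alternatively, the absolute continuity of $\nu'$ can be transported back through the bi-Lipschitz and equivariant conjugacies of Lemma~\ref{hd_equal_lem} to contradict Proposition~\ref{not_abs_cont_lem} together with the full support of the MME $\nu$, which yields $\nu\not\ll m$.)

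The main obstacle is precisely the indifferent fixed point $0\sim 1$: it destroys uniform expansion and bounded distortion, so neither the dimension formula $\mathrm{HD}(\nu')=h/\lambda$ nor the rigidity statement ``Rokhlin saturation $\Rightarrow$ absolute continuity'' is directly available from the classical expanding-map toolbox, and one must use the versions tailored to maps with indifferent periodic points while checking that $\nu'$ genuinely has positive finite Lyapunov exponent (this is where non-atomicity of $\nu'$ and boundedness of $|F'|$ enter). A secondary point worth spelling out explicitly, rather than asserting, is the non-integrability at $0$ of the invariant density of $F$, which is immediate from $F(t)=t+2t^2+O(t^3)$ but is what drives the whole contradiction.
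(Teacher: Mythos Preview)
Your argument is correct in outline and takes a genuinely different route from the paper. Both proofs begin identically: invoke Lemma~\ref{hd_equal_lem} to pass to $\nu'$, then use the entropy--Lyapunov dimension formula $\mathrm{HD}(\nu')=\ln 3/\lambda_{\nu'}$ (the paper cites \cite{PU,HR92} for this). Where they diverge is in establishing $\lambda_{\nu'}>\ln 3$. The paper does this by brute force: it partitions $[0,1]$ into the $27$ rank-three intervals, each of $\nu'$-mass $1/27$, exploits monotonicity of $\ln|F'|$ on each branch to write down an explicit lower Riemann sum, and evaluates it numerically to get $\lambda_{\nu'}\gtrsim 1.201>\ln 3$. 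Your route is conceptual: if $\lambda_{\nu'}=\ln 3=h_{\nu'}$ then $\nu'$ saturates Rokhlin's inequality, hence (Ledrappier) is absolutely continuous; but the quadratic tangency $F(t)=t+2t^2+\cdots$ forces any $F$-invariant measure in the Lebesgue class to have density $\asymp 1/t$ near $0$, so there is no a.c.\ invariant \emph{probability} measure --- contradiction. The paper's approach is entirely elementary and self-contained, and yields a quantitative bound; yours requires citing the non-uniformly-hyperbolic versions of the dimension formula and of Ledrappier's rigidity, but is more robust and would plausibly handle Question~\ref{question_1} for general $d$ without new computations.

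Two small corrections. First, $0$ and $1$ are \emph{not} identified here: $F$ is a self-map of $[0,1)$, and both endpoints are (limits of) separate neutral fixed points, each with a quadratic tangency (near $1$ one has $1-F(1-s)=s+s^2+\cdots$), so the density blows up at both ends --- this only strengthens your non-existence of an a.c.i.p. Second, your alternative route via Proposition~\ref{not_abs_cont_lem} is not quite complete as stated: that proposition rules out \emph{mutual} absolute continuity of $\nu$ and $m$, whereas what you would obtain from saturation is only $\nu'\ll\mathrm{Leb}$; you would need an extra ergodicity or equivalence-of-measure-class step to close that loop, so it is cleaner to stick with the Pomeau--Manneville argument.
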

\begin{proof}
By Lemma~\ref{hd_equal_lem}, it suffices to show that $\mathrm{HD}(\nu') < 1$, where $\nu'$ is the measure of maximal entropy of $F$ (defined by Equation~\ref{nu_dash_def}).
The following relation between Hausdorff dimension, entropy, and Lyapunov exponent is standard (see \cite[\S 10]{PU}, \cite{HR92}):
$$
\mathrm{HD}(\nu') = \frac{\ln(3)}{\int_0^1 \ln\vert F'\vert d\nu'}\ .
$$
\begin{figure}[h!]	
\captionsetup{width=0.96\linewidth}
	\begin{tikzpicture}
		\node[anchor=south west,inner sep=0] at (0,0) {\includegraphics[width=0.5\linewidth]{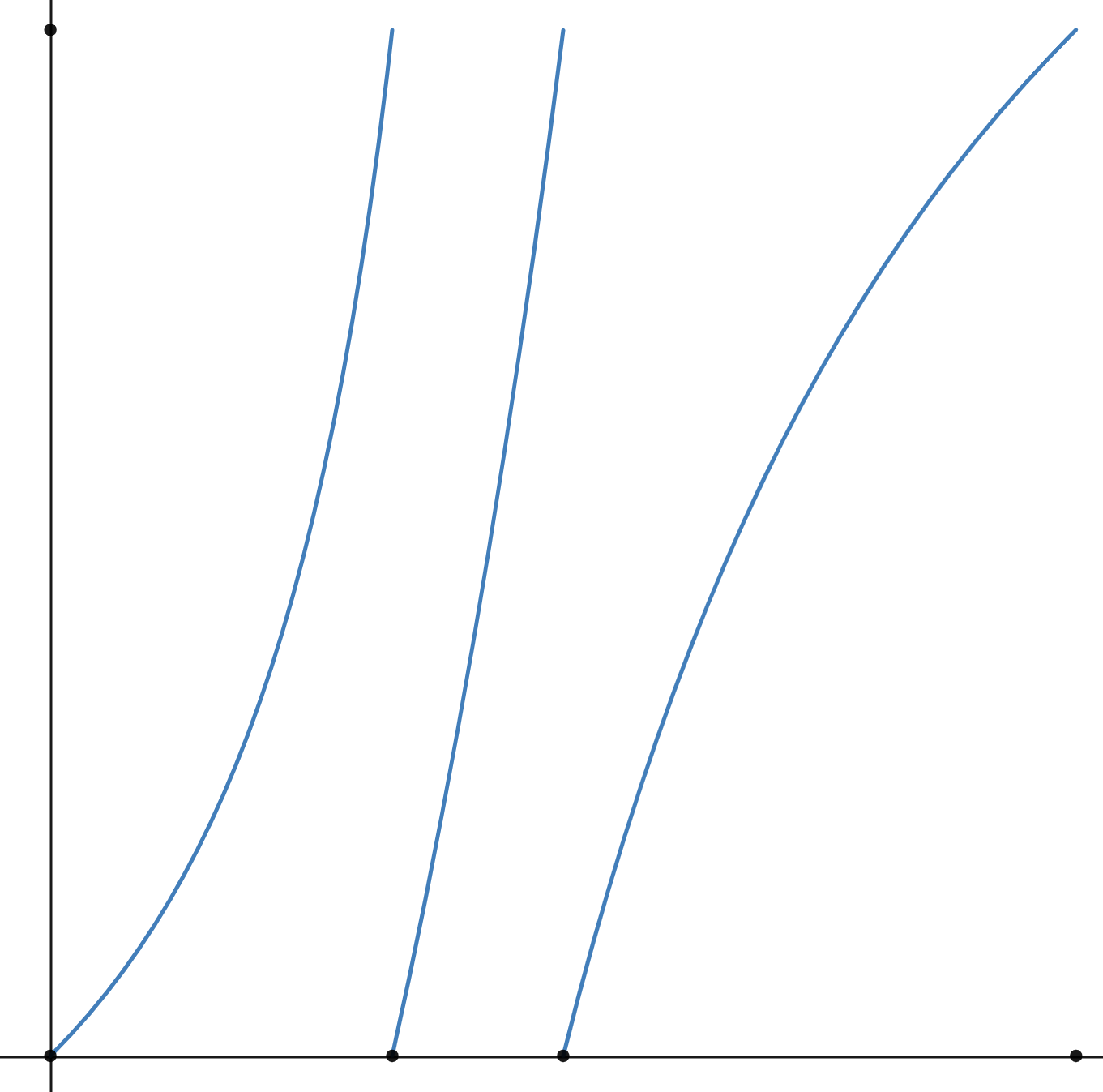}};
		\node[anchor=south west,inner sep=0] at (7.5,0) {\includegraphics[width=0.36\linewidth]{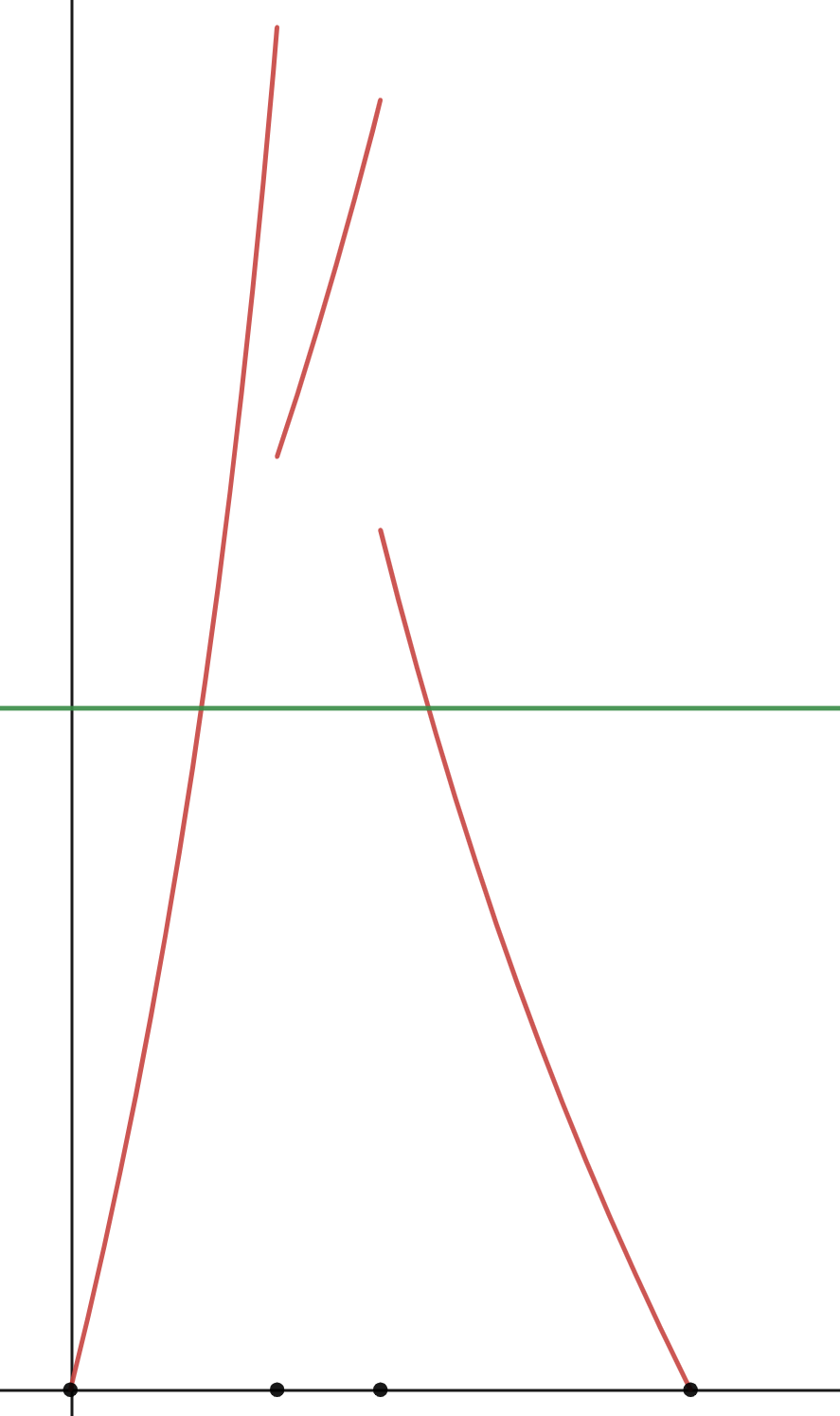}};
		\node at (0.3,-0.2) {$(0,0)$};
		\node at (2.2,-0.2) {$(\frac13,0)$};
		\node at (3.4,-0.2) {$(\frac12,0)$};
		\node at (6.2,-0.2) {$(1,0)$};
		\node at (-0.25,6.1) {$(0,1)$};
		\node at (7.9,-0.25) {\begin{small}$(0,0)$\end{small}};
		\node at (9,-0.25) {\begin{small}$(\frac13,0)$\end{small}};
		\node at (9.55,0.45) {\begin{small}$(\frac12,0)$\end{small}};
		\node at (11.2,-0.25) {\begin{small}$(1,0)$\end{small}};
		\node at (11.25, 3.54) {$y= \ln(3)$};
			\end{tikzpicture}
	\caption{Left: The graph of $F$. Right: The graph of $\ln\vert F'\vert$.}
	\label{first_return_derivative_fig}
\end{figure}

We also have the following explicit description of $\ln\vert F'\vert$ on $(0,1)\setminus\{\frac13, \frac12\}$:
$$ \displaystyle \ln\vert F'\vert(t)= \left\{\begin{array}{ll}
                   \vspace{2mm}

                    \ln\frac{1}{\left(1-2x\right)^{2}}\ ,\qquad t\in\left(0,\frac13\right),  \\
                    \vspace{2mm}

                    \ln\frac{2}{\left(1-x\right)^{2}}\ , \qquad t\in\left(\frac13,\frac12\right),  \\
                    \vspace{2mm}

                    \ln\frac{1}{x^{2}}\ , \qquad t\in\left(\frac12,1\right).
                                          \end{array}\right. 
$$
(See Figure~\ref{first_return_derivative_fig}.)

Our goal is to show that $\ln(3)$ is a strict lower bound for the Lyapunov exponent of $F$. To this end, we first note that by definition of $\nu'$, the ideal vertices (in $[0,1]$) of the tiles of rank up to three divide the unit interval into $3^3$ sub-intervals each of which has $\nu'$-mass $1/3^3$. The endpoints of these intervals are displayed in Figure~\ref{deep_ranks_tiling_fig}.
\begin{figure}[h!]

\captionsetup{width=0.96\linewidth}	
	\begin{tikzpicture}
		\node[anchor=south west,inner sep=0] at (0,0) {\includegraphics[width=0.96\linewidth]{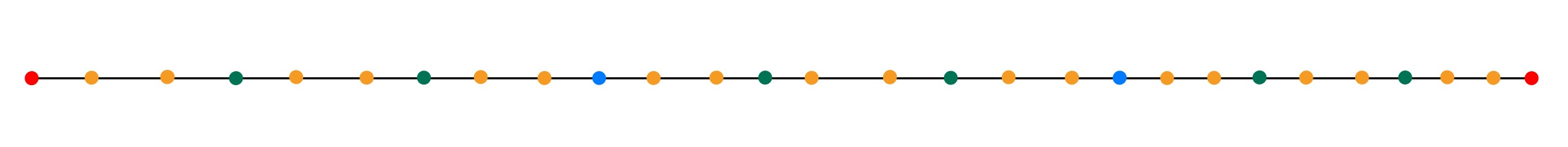}};
		\node at (0.25,0.54) [circle,fill=red,inner sep=1.8pt] {};
		\node at (1.84,0.54) [circle,fill=Green,inner sep=1.6pt] {};
		\node at (3.28,0.54) [circle,fill=Green,inner sep=1.6pt] {};
		\node at (4.66,0.54) [circle,fill=red,inner sep=1.8pt] {};
		\node at (5.94,0.54) [circle,fill=Green,inner sep=1.6pt] {};
		\node at (7.36,0.54) [circle,fill=Green,inner sep=1.6pt] {};
		\node at (8.7,0.54) [circle,fill=red,inner sep=1.8pt] {};
		\node at (9.76,0.54) [circle,fill=Green,inner sep=1.6pt] {};
		\node at (10.88,0.54) [circle,fill=Green,inner sep=1.6pt] {};
		\node at (11.9,0.54) [circle,fill=red,inner sep=1.8pt] {};
		\node at (-0.1,0.5) {$0$};
		\node at (0.7,0.1) {$\frac17$};
		\node at (1.32,0.1) {$\frac16$};
		\node at (1.84,1) {$\frac15$};
		\node at (2.28,0.1) {$\frac29$};
		\node at (2.86,0.1) {$\frac{3}{13}$};
		\node at (3.28,1) {$\frac14$};
		\node at (3.72,0.1) {$\frac{3}{11}$};
		\node at (4.24,0.1) {$\frac27$};
	        \node at (4.64,1) {$\frac13$};
	        \node at (5.06,0.1) {$\frac38$};
	        \node at (5.54,0.1) {$\frac{5}{13}$};
		\node at (5.94,1) {$\frac25$};
		\node at (6.32,0.1) {$\frac{7}{17}$};
		\node at (6.94,0.1) {$\frac{5}{12}$};
		\node at (7.36,1) {$\frac37$};
		\node at (7.8,0.1) {$\frac49$};
		\node at (8.36,0.1) {$\frac{5}{11}$};
                 \node at (8.7,1) {$\frac12$};
                 \node at (9.06,0.1) {$\frac59$};
                 \node at (9.44,0.1) {$\frac47$};
                 \node at (9.76,1) {$\frac35$};
                 \node at (10.1,0.1) {$\frac58$};
                 \node at (10.56,0.1) {$\frac{7}{11}$};
		\node at (10.88,1) {$\frac23$};
		\node at (11.2,0.1) {$\frac57$};
		\node at (11.62,0.1) {$\frac34$};
                 \node at (12.2,0.54) {$1$};
			\end{tikzpicture}
	\caption{The break-points of the piecewise definition of $F$ (which are the ideal vertices of a rank one tile) are marked in red. The new ideal vertices of the rank two, three tiles are displayed in green, orange (respectively). Each of the $27$ complementary components has $\nu'$-mass $\frac{1}{27}$.}
	\label{deep_ranks_tiling_fig}
\end{figure}

Since $\ln\vert F'\vert$ is increasing on $(0,\frac13)$ and $(\frac13,\frac12)$, and decreasing on $(\frac12,1)$, we have:
\begin{equation}\notag
\begin{split}
& \int_0^1 \ln\vert F'\vert d\nu'\\
&\geq \frac{\ln\vert F'(0)\cdot F'(\frac17)\cdot F'(\frac16)\cdot F'(\frac15)\cdot F'(\frac29)\cdot F'(\frac{3}{13})\cdot F'(\frac14)\cdot F'(\frac{3}{11})\cdot F'(\frac27) \vert}{3^3}\\
& \hspace{1mm} + \frac{\ln\vert F'(\frac13)\cdot F'(\frac38)\cdot F'(\frac{5}{13})\cdot F'(\frac25)\cdot F'(\frac{7}{17})\cdot F'(\frac{5}{12})\cdot F'(\frac37)\cdot F'(\frac49)\cdot F'(\frac{5}{11}) \vert}{3^3}\\
& \hspace{1mm} + \frac{\ln\vert F'(\frac59)\cdot F'(\frac47)\cdot F'(\frac35)\cdot F'(\frac58)\cdot F'(\frac{7}{11})\cdot F'(\frac23)\cdot F'(\frac57)\cdot F'(\frac34)\cdot F'(1) \vert}{3^3}\\
& \approx 1.201 > \ln(3).
\end{split}
\end{equation}
(The number $1.201$ above is obtained by explicit numerical computation using the formula of $\ln\vert F'\vert$ given above, and is correct up to $3$ decimal places.)

Hence, $\mathrm{HD}(\nu') = \frac{\ln(3)}{\int_0^1 \ln\vert F'\vert d\nu'} <1$.
\end{proof}

\subsubsection{The higher Bowen-Series case}\label{hbs_case_subsubsec}

We will now show that the Hausdorff dimension of the MME $\nu$ of the higher Bowen-Series map $A$ of Section~\ref{sec-cfm} is strictly less than $1$. To simplify computations, we will first apply a reduction step that will allow us to work with a degree $-2$ covering of $\mathbb{S}^1$ (note that $A$ is a degree $4$ covering of the circle). This will also relate the Hausdorff dimension of $\nu$ to that of a classically studied measure arising naturally from the Minkowski question-mark function $\ciq$ (see \cite{Denjoy,Salem,Kinney} for details on the question-mark function).

For consistency, we will use the notation employed in Section~\ref{sec-cfm}. Recall that $W$ is a (closed) ideal quadrilateral in $\mathbb{D}$ with ideal vertices at the fourth roots of unity (the quadrilateral $1236$ in Figure~\ref{cfd}). The M{\"o}bius maps $g, h$ pair the sides of this quadrilateral (as shown in Figure~\ref{cfd}), and generate a thrice punctured sphere Fuchsian group $\Gamma_0$. Moreover, $W$ is a (closed) fundamental domain of $\Gamma_0$. The fundamental domain of the higher Bowen-Series map $A$ is given by the ideal hexagon $123567$, while the inner domain of $A$ is the ideal triangle $136$. 

Let us denote reflections in the hyperbolic geodesics $\overline{13}, \overline{36}$, and $\overline{61}$ by $r_1,r_2, r_3$. With this notation, the side-pairing transformations $g$ and $h$ are given by $r_2\circ r_1$ and $r_3\circ r_1$, respectively. It is also readily checked that the map $A:\mathbb{S}^1\to\mathbb{S}^1$ is the second iterate of the piecewise reflection Markov map
$$
\mathfrak{R}:\mathbb{S}^1\to\mathbb{S}^1,\quad
 z \mapsto \left\{\begin{array}{lll}
                    r_1(z) & \mbox{if}\ z\in \arc{123}, \\
                    r_2(z) & \mbox{if}\ z\in \arc{356}, \\
                    r_3(z) & \mbox{if}\ z\in \arc{671}.
                                          \end{array}\right. 
$$
Hence, the circle endomorphisms $A$ and $\mathfrak{R}$ have the same measure of maximal entropy.

We will now relate the map $\mathfrak{R}$ to a well-studied orientation-reversing double covering of $\mathbb{S}^1$.
Note that as any pair of hyperbolic ideal triangles are M{\"o}bius equivalent, the triangle $\Delta 136$ of Figure~\ref{cfd} is M{\"o}bius equivalent to the \emph{regular} ideal polygon $\Pi\subset \mathbb{D}$ with vertices at the third roots of unity. The \emph{Nielsen map} $\pmb{\rho}_2:\mathbb{S}^1\to\mathbb{S}^1$ of the \emph{regular ideal triangle reflection group} is defined as anti-M{\"o}bius reflections in the three sides of $\Pi$ on the three corresponding arcs of $\mathbb{S}^1$ (see Figure~\ref{itg_nielsen_fig} for a pictorial illustration and \cite[\S 2]{LLMM1}, \cite[\S 4.1]{LLMM4} for the precise definition and properties of this map). The M{\"o}bius equivalence of $\Delta 136$ and $\Pi$ implies that the map $\mathfrak{R}$ is M{\"o}bius conjugate to $\pmb{\rho}_2$. Moreover, the fact that M{\"o}bius maps are bi-Lipschitz tells us that the Hausdorff dimension of the MME $\nu$ of the higher Bowen-Series map $A$ is equal to the Hausdorff dimension of the MME of $\pmb{\rho}_2$.
\begin{figure}[h!]

\begin{center}
\includegraphics[width=0.4\linewidth]{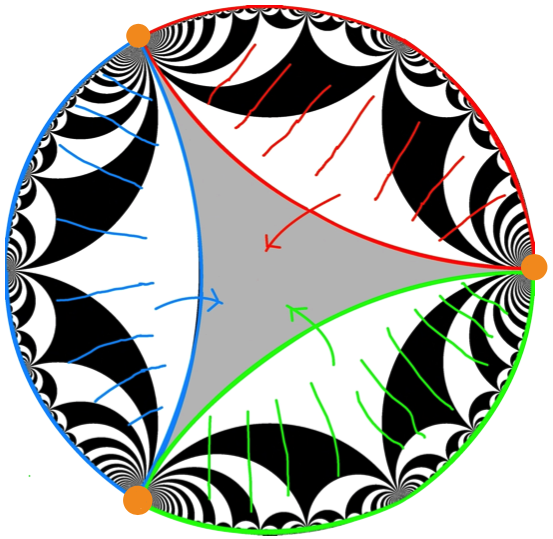}
\caption{The action of the Nielsen map $\pmb{\rho}_2$ of the ideal triangle group is depicted.}
\label{itg_nielsen_fig}
\end{center}
\end{figure}

\noindent \textbf{Figure~\ref{itg_nielsen_fig}:} The Nielsen map $\pmb{\rho}_2$ acts on the arcs $\arc{1 e^{\frac{2\pi i}{3}}}$, $\arc{e^{\frac{2\pi i}{3}} e^{\frac{4\pi i}{3}}}$, and $\arc{e^{\frac{4\pi i}{3}} 1}$  of $\bS^1$ as reflections in the bi-infinite hyperbolic geodesics $\overline{1 e^{\frac{2\pi i}{3}}}$, $\overline{e^{\frac{2\pi i}{3}} e^{\frac{4\pi i}{3}}}$, and $\overline{e^{\frac{4\pi i}{3}} 1}$, respectively. It naturally extends as a piecewise anti-M{\"o}bius map to the complement of the ideal triangle $\Pi$ (in grey) in $\disk$.

Applying the construction of Subsection~\ref{bs_case_subsubsec} to the upper half-plane model of $\pmb{\rho}_2$ (such that the ideal triangle in $\mathbb{D}$ with vertices at the third roots of unity corresponds to the ideal triangle in $\mathbb{H}$ with vertices at $0, 1, \infty$) combined with the arguments of Lemma~\ref{hd_equal_lem}, one can show that the Hausdorff dimension of the MME of $\pmb{\rho}_2$ is equal to the Hausdorff dimension of the MME of the orientation-reversing degree two map
$$
F:[0,1)\to[0,1),\qquad \displaystyle \tau(t)= \left\{\begin{array}{ll}
                     \frac{2t-1}{t-1}\ ({\rm mod}\ 1) \qquad t\in\left[0,\frac12\right),  \\
                     \frac{1-t}{t}\hspace{2.5mm} ({\rm mod}\ 1) \qquad t\in\left[\frac12,1\right).
                                          \end{array}\right. 
$$
We refer the reader to \cite[\S 9]{LLMM4} for the details of this construction. It is also shown there that the map $F$ is topologically conjugate to the orientation-reversing doubling map
$$
\times_{-2}(x)=\left\{\begin{array}{ll}
                     -2x+1\ ({\rm mod}\ 1) \qquad x\in\left[0,\frac12\right),  \\
                     -2x+2\ ({\rm mod}\ 1) \qquad x\in\left[\frac12,1\right),
                                          \end{array}\right.
$$
via the question-mark function. Hence, the MME of $F$ is given by the push-forward of the Lebesgue measure on $[0,1]$ under $\ciq^{-1}$. According to \cite{KS}, the Hausdorff dimension of this measure is strictly less than $1$. In fact, it is shown there that the Hausdorff dimension of the push-forward of the Lebesgue measure on $[0,1]$ under $\ciq^{-1}$ is approximately $0.875$ (see \cite[Figure~2, \S 3]{KS}). We collect the upshot of the above analysis in the following proposition.

\begin{prop}\label{hd_less_than_one_prop_1}
Let $\nu$ be the measure of maximal entropy of the higher Bowen-Series map $A$ of a Fuchsian thrice punctured sphere group. Then, 
$$
\mathrm{HD}(\nu)\approx 0.875 < 1.
$$
\end{prop}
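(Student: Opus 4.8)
The plan is to chain together the reductions already set up in the text so that the computation of $\mathrm{HD}(\nu)$ is transferred, via bi-Lipschitz conjugacies, to the Hausdorff dimension of a single classically studied measure on $[0,1]$. First I would recall from Subsection~\ref{hbs_case_subsubsec} that the higher Bowen-Series map $A$ of the thrice punctured sphere group is the second iterate of the piecewise-reflection map $\mathfrak{R}$, so $A$ and $\mathfrak{R}$ share the same measure of maximal entropy $\nu$. Then I would invoke M\"obius equivalence of any two ideal triangles to conjugate $\mathfrak{R}$ to the Nielsen map $\pmb{\rho}_2$ of the regular ideal triangle reflection group; since M\"obius maps are bi-Lipschitz on compact subsets of $\bS^1$, $\mathrm{HD}(\nu) = \mathrm{HD}(\text{MME of }\pmb{\rho}_2)$.

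Next I would run the half-plane reduction of Subsection~\ref{bs_case_subsubsec} — pass to the upper half-plane model in which the ideal triangle has vertices $0,1,\infty$, use a bi-Lipschitz M\"obius chart together with the symmetry argument of Lemma~\ref{hd_equal_lem} — to identify $\mathrm{HD}(\text{MME of }\pmb{\rho}_2)$ with $\mathrm{HD}(\text{MME of }F)$, where $F:[0,1)\to[0,1)$ is the explicit orientation-reversing degree $-2$ map written in the excerpt. The point of this step is that the fundamental domain of $\pmb{\rho}_2$ has a $\mathbb{Z}/3$ rotational symmetry just as in the Bowen-Series case, so the argument of Lemma~\ref{hd_equal_lem} (choosing a full-measure set, intersecting with one fundamental sector, symmetrizing back) applies verbatim to give equality of Hausdorff dimensions of the two MMEs; I would simply cite \cite[\S 9]{LLMM4} for the details of this construction rather than reproduce it.

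Finally I would use the fact, also recorded in \cite[\S 9]{LLMM4}, that $F$ is topologically conjugate to the orientation-reversing doubling map $\times_{-2}$ via the Minkowski question-mark function $\ciq$. Since the Lebesgue measure $\overline{m}$ on $[0,1]$ is the unique measure of maximal entropy of $\times_{-2}$ (topological entropy $\ln 2$, and uniqueness as for any expanding degree-$2$ circle/interval endomorphism), its push-forward $\ciq^{-1}_\ast\overline{m}$ is the MME of $F$. By \cite{KS}, $\mathrm{HD}(\ciq^{-1}_\ast\overline{m}) < 1$, and in fact numerically $\approx 0.875$ (cf.\ \cite[Figure~2, \S 3]{KS}). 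Combining the three equalities of Hausdorff dimensions established above yields $\mathrm{HD}(\nu) \approx 0.875 < 1$, as claimed.

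The main obstacle is the second step: making sure that the symmetry argument of Lemma~\ref{hd_equal_lem} really does transfer to the $\pmb{\rho}_2$ setting, i.e.\ that the homeomorphism conjugating $z^{-2}$ (or $\times_{-2}$) to $\pmb{\rho}_2$ genuinely intertwines the relevant $\mathbb{Z}/3$ rotations so that symmetrization preserves full-measure sets, and that the M\"obius chart used is bi-Lipschitz on the closed arcs involved. All of this is essentially bookkeeping of the kind carried out in Subsection~\ref{bs_case_subsubsec}, so I would present it briefly and defer the genuinely computational parts (the explicit form of $F$, the conjugacy with $\times_{-2}$, and the numerical value $0.875$) to the cited references \cite{LLMM4,KS}.
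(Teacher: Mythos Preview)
Your proposal is correct and follows essentially the same route as the paper: reduce from $A$ to $\mathfrak{R}$ (same MME), M\"obius-conjugate to the Nielsen map $\pmb{\rho}_2$ (bi-Lipschitz, so same $\mathrm{HD}$), pass to the upper half-plane interval map $F$ via the symmetry argument of Lemma~\ref{hd_equal_lem}, identify the MME of $F$ with $\ciq^{-1}_\ast\overline{m}$ through the conjugacy with $\times_{-2}$, and quote \cite{KS} for the value $\approx 0.875$. The paper presents exactly this chain of reductions in the discussion preceding the proposition and then states the result as its ``upshot''; your caveat about checking that the $\mathbb{Z}/3$ symmetry intertwines correctly is the only point the paper also leaves to the reader (and to \cite{LLMM4}).
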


\subsection{Some open questions}\label{qns_subsec}

We conjecture that the results of Section~\ref{hd_computations_subsec} hold in greater generality.

\begin{qn}\label{question_1}
	Let $\nu$ be the measure of maximal entropy of a (higher) Bowen-Series map of a Fuchsian punctured sphere group.
	Is 
	$$
	\mathrm{HD}(\nu):= \mathrm{inf} \{\mathrm{HD}(Y): Y\subset \bS^1,\ \nu(Y)=1\}
	$$
	less that $1$?
\end{qn}

Henceforth we will assume that $k>3$, so that the surface $S_{0,k}$ has a non-trivial Teichm{\"u}ller space. 

We believe that the Hausdorff dimension of the limit set of the conformal mating of $\widehat{A}_{\Gamma,\mathrm{BS}}$ (respectively, $\widehat{A}_{\Gamma,\mathrm{hBS}}$) and $P$, where $\Gamma\in\mathrm{Teich}(S_{0,k})$ and $P\in\mathcal{H}_{2k-3}$ (respectively, $P\in\mathcal{H}_{(k-1)^2}$), is strictly greater than $1$. The next question is motivated by Bowen's theorem on Hausdorff dimension of quasi-Fuchsian limit sets (cf. \cite{bowen}). 

\begin{qn}\label{question_2}
	Do the Hausdorff dimensions of limit sets of the above class of conformal matings attain its global minimum at a unique point?
\end{qn}

The following questions are motivated by results of McMullen on variation of Hausdorff dimensions of limit sets and naturally associated measures living on them (cf. \cite{ctm_td_wp}).

\begin{qn}\label{question_3}
	How does the Hausdorff dimension of the measure of maximal entropy of the (higher) Bowen-Series map vary as the marked group runs over $\mathrm{Teich}(S_{0,k})$?
\end{qn}

\begin{qn}\label{question_4}
	How does the Hausdorff dimension of the limit set of the conformal mating of $\widehat{A}_{\Gamma,\mathrm{BS}}$ (respectively, $\widehat{A}_{\Gamma,\mathrm{hBS}}$) and $P$ vary as $\Gamma$ runs over $\mathrm{Teich}(S_{0,k})$ and $P$ runs over $\mathcal{H}_{2k-1}$ (respectively, over $\mathcal{H}_{(k-1)^2}$)?
\end{qn}

\end{document}